\newtheorem{neu}{}[section]
\newtheorem*{Cor*}{Corollary}
\newtheorem{Thm}{Theorem}
\newtheorem*{Thm*}{Theorem}
\newtheorem{Cor}[Thm]{Corollary}
\newtheorem{Prop}[neu]{Proposition}
\newtheorem*{Prop*}{Proposition}
\theoremstyle{definition}
\newtheorem{Lemma}[neu]{Lemma}
\newtheorem*{Rmk*}{Remark}
\newtheorem*{Rmks*}{Remarks} 
\newtheorem{Rmk}[neu]{Remark}
\newtheorem{Ex}[neu]{Example}
\newtheorem*{Ex*}{Example}
\newtheorem{Assumption}[neu]{Assumption}
\newtheorem{Def}[neu]{Definition}
\newtheorem{Conj}{Conjecture} 
\newtheorem{Question}{Question}
\newcommand{\N}{\mathbb{N}}
\newcommand{\Q}{\mathbb{Q}}
\newcommand{\Z}{\mathbb{Z}}
\newcommand{\R}{\mathbb{R}}
\newcommand{\C}{\mathbb{C}}
\newcommand{\CZ}{\mu_{\mathrm{CZ}}}
\newcommand{\im}{\mathrm{im\,}}
\newcommand{\ind}{\mathrm{ind\,}}
\newcommand{\om}{\omega}
\newcommand{\slope}{\operatorname{slope}}                               
\newcommand{\Sigmadot}{\dot{\Sigma}}                                    
\newcommand{\sign}{\operatorname{sign}}                                 
\newcommand{\T}{\mathcal{T}}                                            
\newcommand{\wind}{\operatorname{wind}}                                 
\newcommand{\x}{\textbf{x}}                                             
\newcommand{\U}{\mathcal{U}}
\newcommand{\D}{\mathbb{D}}
\newcommand{\J}{\mathcal{J}}
\newcommand{\V}{\mathcal{V}}
\newcommand{\W}{\mathcal{W}}
\newcommand{\beq}{\begin{equation}}
\newcommand{\beqn}{\begin{equation}\nonumber}
\newcommand{\eeq}{\end{equation}}
\newcommand{\bea}{\begin{equation}\begin{aligned}}
\newcommand{\bean}{\begin{equation}\begin{aligned}\nonumber}
\newcommand{\eea}{\end{aligned}\end{equation}}
\numberwithin{equation}{section}
\definecolor{blue}{rgb}{0,0,1}
\definecolor{red}{rgb}{1,0,0}
\definecolor{green}{rgb}{0,.7,0}
\newcommand{\cc}{{\mathbf{c}}}
\renewcommand{\dbar}{\bar{\partial}}
\newcommand{\dist}{\operatorname{dist}}
\newcommand{\p}{\partial}
\newcommand{\nN}{\mathcal{N}}
\newcommand{\windpi}{\operatorname{wind}_\pi}
\newcommand{\mM}{{\mathcal M}}
\newcommand{\reg}{{\operatorname{reg}}}
\newcommand{\tT}{{\mathcal T}}
\newcommand{\Contact}{\Xi(3)}
\newcommand{\Embed}{\Xi_{\text{\textsf{embed}}}(3)}
\newcommand{\Nonsep}{\Xi_{\text{\textsf{nonsep}}}(3)}
\begin{document}
\title{On Non-Separating Contact Hypersurfaces in Symplectic $4$--Manifolds}
\author{Peter Albers}
\author{Barney Bramham}
\author{Chris Wendl}

\address{
    Peter Albers\\
    Department of Mathematics\\
    ETH Z\"urich}
\email{palbers@math.ethz.ch}
\address{
    Barney Bramham\\
    Max-Planck Institute, Leipzig}
\email{bramham@mis.mpg.de}
\address{
    Chris Wendl\\
    Department of Mathematics\\
    ETH Z\"urich}
\email{wendl@math.ethz.ch}
\keywords{symplectic manifolds, contact manifolds,
pseudoholomorphic curves, separating hypersurfaces}
\subjclass[2000]{Primary 32Q65; Secondary 57R17}
\begin{abstract}
We show that certain classes of contact $3$--manifolds do not admit 
non-separating contact type embeddings into any closed symplectic
$4$--manifolds, e.g.~this is the case for all contact manifolds that
are (partially) planar or have Giroux torsion.  The latter implies
that manifolds with Giroux torsion do not admit contact type embeddings into any
closed symplectic $4$--manifolds.  Similarly, there are symplectic
$4$--manifolds that can admit smoothly embedded non-separating hypersurfaces,
but not of contact type: we observe that this is the case for all
symplectic ruled surfaces.
\end{abstract}
\maketitle

\section{Introduction}\label{sec:introduction}

\subsection{Main results}\label{subsec:main}

Let $(W,\omega)$ denote a closed symplectic
manifold of dimension four.  A closed hypersurface
$M \subset W$ is of contact type if {it is transverse to a Liouville vector
field, i.e.~a smooth vector field $Y$ defined near $M$ such that
$L_Y\omega = \omega$.}  Then $\iota_Y\omega$
is a contact form on $M$, and we will denote the resulting contact structure by
$\xi = \ker\iota_Y\omega$; it is independent of $Y$ up to isotopy.  
If $M$ separates $W$ into two components, 
then it is said to form a {convex} boundary on the component where $Y$ 
points outward, and a {concave} boundary on the other component.
By constructions due to Etnyre-Honda \cite{EtnyreHonda:cobordisms}
and Eliashberg \cite{Eliashberg:cap},
every contact $3$--manifold can occur as the concave boundary of some
compact symplectic manifold.  This is not true for convex boundaries:
for instance, Gromov \cite{Gromov} and Eliashberg 
\cite{Eliashberg:diskFilling}
showed that overtwisted contact manifolds can never occur as convex
boundaries, and a finer obstruction comes from {Giroux torsion}
\cite{Gay:GirouxTorsion}.

In this paper, we address the question of whether a given contact
$3$--manifold $(M,\xi)$ can occur as a {\emph{non-separating}}
contact hypersurface in any closed symplectic manifold, and similarly,
whether a given symplectic $4$--manifold $(W,\omega)$ admits non-separating
contact hypersurfaces.  Observe that \emph{separating} contact hypersurfaces
always exist in abundance, e.g.~the boundaries of balls in Darboux
neighborhoods.  We will see in Example~\ref{ex:EtnyresExample} that 
non-separating contact hypersurfaces sometimes exist, but there are
restrictions, as the following Theorem shows.  


\begin{Thm}\label{thm:consequence}
Suppose $(M,\xi)$ is a closed contact $3$--manifold which has any one of the following properties:
\begin{enumerate}
\item $(M,\xi)$ has Giroux torsion
\item $(M,\xi)$ is planar or partially planar (see Definition~\ref{def:partiallyPlanar} below)
\item $(M,\xi)$ admits a symplectic cap containing a symplectically embedded
sphere of nonnegative self-intersection number
\end{enumerate}
Then every contact type embedding of $(M,\xi)$ into any closed symplectic
$4$--manifold is separating.
\end{Thm}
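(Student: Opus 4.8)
Suppose, for a contradiction, that $(M,\xi)$ embeds as a non-separating contact hypersurface in a closed symplectic $4$--manifold $(W,\om)$, with a Liouville vector field $Y$ defined near $M$. The plan is to reduce all three hypotheses to condition~(3) and then exploit the resulting $J$--holomorphic spheres. For the reductions, conditions~(1) and~(2) should each imply~(3): a Giroux torsion domain, and more generally a planar piece in the sense of Definition~\ref{def:partiallyPlanar}, can be capped off by a symplectic manifold carrying a "vertical" sphere. Concretely, one caps the planar pages by disks to obtain a symplectic Lefschetz fibration over $S^2$ whose regular fibre is a symplectically embedded sphere $S$ of self-intersection zero, and then glues on an Etnyre--Honda/Eliashberg cap away from this fibre. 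So from now on assume $(M,\xi)$ admits a symplectic cap $(\overline W_c,\overline\om_c)$ containing a symplectically embedded sphere $S$ with $[S]\cdot[S]=k\ge 0$.

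Since $M$ is non-separating and $W$ is connected, cutting $W$ open along $M$ produces a \emph{connected} compact symplectic manifold $W_0$ with $\partial W_0=\partial_+W_0\sqcup\partial_-W_0$, where $Y$ exhibits $\partial_+W_0$ as a convex boundary and $\partial_-W_0$ as a concave boundary, both contactomorphic to $(M,\xi)$. Glue $\overline W_c$ onto $\partial_+W_0$ along the matching contact structure and attach a negative cylindrical end $(-\infty,0]\times M$ to $\partial_-W_0$. The result $\widehat W$ is a noncompact symplectic manifold with a single negative cylindrical end modelled on the symplectization of $(M,\xi)$; it still contains the symplectic sphere $S$ with $[S]^2=k\ge 0$, and it contains a copy of the interior of $W_0$.

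Now run a McDuff-type analysis on $\widehat W$: choose a generic $\om$--compatible almost complex structure that is cylindrical and adjusted to a nondegenerate contact form on the negative end, makes $S$ holomorphic, and (after blowing up $k$ generic points of $S$ if $k>0$) makes the proper transform a symplectic sphere of square zero. The moduli space of simple $J$--holomorphic spheres in that class, together with the holomorphic buildings that arise under SFT/Gromov compactness — whose lower levels are finite-energy genus-zero curves in $\R\times M$ asymptotic to closed Reeb orbits — should organize $\widehat W$ into a singular ruling by spheres and finite-energy planes, exactly as in McDuff's classification of closed symplectic $4$--manifolds carrying a nonnegative symplectic sphere. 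This forces $\widehat W$ to be connected with $b_2^+=1$, its first homology to be carried by a section of the base of the ruling, and $(M,\xi)$ to be supported by a planar open book. The final contradiction is intersection-theoretic, paralleling the ruled-surface case: $M$ of contact type forces $\iota_M^*\om=d\alpha$, hence $\iota_M^*[\om]=0$ in $H^2(M;\R)$, whereas a section of the ruling (or, in the ruled-surface case, a section of $W$ itself) meets the relevant copy of $M$ — through the identification $W\cong W_0/(\partial_+W_0\sim\partial_-W_0)$ — in a cycle on which $\om$ integrates positively; equivalently, $[M]\neq 0$ in $H_3(W;\R)$ because $M$ is non-separating, yet the ruling of $\widehat W$ forces this class to vanish by the same exactness argument applied to a ruling fibre.

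The main obstacle is the holomorphic-curve step. Carrying out McDuff's sphere-counting in the noncompact, cylindrical-end setting needs the full strength of SFT compactness and a careful enumeration of the possible level-splittings and bubbles: one must rule out degenerations that would destroy the ruling, treat automatic/generic transversality for the (a priori possibly multiply covered) curves in the symplectization of $(M,\xi)$, and keep track of which fibres of the ruling escape into the negative end as planes. By comparison, establishing the reductions (1),(2)$\Rightarrow$(3) and making the closing intersection-theoretic argument precise and uniform across the three cases are less serious, though they still require care; see also Example~\ref{ex:EtnyresExample} for why some such hypothesis is genuinely needed.
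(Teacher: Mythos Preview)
Your proposal has genuine gaps, and the overall architecture differs from the paper's in a way that makes your version much harder to complete.

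\textbf{The reductions (1),(2)$\Rightarrow$(3).} You assert that Giroux torsion and partial planarity each yield a symplectic cap containing a nonnegative symplectic sphere, but you do not prove this, and the paper does not proceed this way. Instead the paper introduces an auxiliary condition, property~$(\star)$ (Definition~\ref{def:star}), and shows directly that Giroux torsion and partial planarity each imply~$(\star)$: for torsion via an explicit family of holomorphic cylinders in $\R\times T^2\times[0,1]$ (\S\ref{sec:torsion}), and for (partial) planarity via the holomorphic open book constructions of Abbas and Wendl. Condition~(3) is absorbed into a ``weak'' version of~$(\star)$. Your claimed reduction may well be true but it is not a triviality, and your sketch (``cap the planar pages by disks to obtain a Lefschetz fibration over $S^2$'') does not produce a cap for $(M,\xi)$.

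\textbf{Negative end versus infinite chain.} This is the crucial divergence. You attach a \emph{negative} cylindrical end to the concave boundary of $W_0$ and then try to run a McDuff-style sphere argument; as you correctly diagnose, this forces you to confront arbitrary SFT breakings into that end, governed by Reeb dynamics on $(M,\xi)$ about which you have assumed nothing. The paper sidesteps this entirely: rather than attaching a negative end, it glues together an \emph{infinite chain} of copies of $W_1$ (concave-to-convex) to obtain a noncompact manifold $(\W,\omega)$ with only a \emph{convex} boundary, then attaches a positive cylindrical end. Now there is no negative end for curves to escape into; the only noncompactness is the infinite stack of copies of $W_1$, and a periodic almost complex structure together with the monotonicity lemma gives a uniform $C^0$--bound on all curves in the relevant moduli space (Proposition~\ref{prop:uniform_compactness}). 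The contradiction is then clean: the moduli space foliates $\W^\infty$ (Corollary~\ref{cor:foliation}), forcing $\W$ to be compact, which it is not.

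\textbf{The closing argument.} Your final paragraph is not a proof: the statements about $b_2^+$, planar open books, and ``the ruling forces $[M]=0$'' are not justified, and the intersection-theoretic contradiction you sketch does not clearly go through in the noncompact setting you have built. The paper's contradiction (compactness of a manifestly noncompact manifold) requires no such homological bookkeeping.

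In short, the missing idea is the infinite chain construction, which converts the problem from ``understand SFT degenerations at an uncontrolled negative end'' to ``bound energy against a periodic geometry''.
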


\begin{Rmk}\label{rmk:generalizing_main_thm}
Theorem \ref{thm:consequence} admits an easy generalization as follows.
We will say that $(M,\xi)$ has any given property
\emph{after contact surgery} if the property holds for some contact manifold
$(M',\xi')$ obtained from $(M,\xi)$ by a (possibly trivial) sequence of contact
connected sum operations and contact $(-1)$--surgeries.  The significance
of these operations (see e.g.~\cite{Geiges:book})
is that they imply the existence of a symplectic cobordism
from $(M,\xi)$ to $(M',\xi')$: recall 
that a symplectic cobordism from $(M_-,\xi_-)$ to $(M_+,\xi_+)$ is in general a
compact symplectic manifold $(W,\omega)$ with {$\p W = (-M_-) \sqcup M_+$},
such that there is a Liouville vector field near $\p W$ defining
$(M_-,\xi_-)$ and $(M_+,\xi_+)$ as concave and convex boundary components
respectively.  The special case where $M_- = \emptyset$ is a convex
filling of $(M_+,\xi_+)$. {If} $M_+ = \emptyset$ we instead get a concave
filling of $(M_-,\xi_-)$, also known as a symplectic cap.

It will follow from the more general Theorem~\ref{thm:separating} below
that Theorem~\ref{thm:consequence} also holds whenever properties~(1)
or~(2) hold after contact surgery.  (For property~(3) this statement is
trivial.)
\end{Rmk}

The following example shows that non-separating 
contact type hypersurfaces do exist in general.

\begin{Ex}[\textbf{Etnyre}]
\label{ex:EtnyresExample}
Suppose $(W_0,\omega_0)$ is a compact symplectic manifold with a convex
boundary that has two connected components.  {In this case we say that
$(W_0,\omega_0)$ is a convex semifilling of each of its boundary components;
the} existence of such
objects was first established by McDuff \cite{McDuff:boundaries}.
Produce a new symplectic manifold $(W_1,\omega_1)$ with convex boundary
by attaching a symplectic $1$--handle along a pair of $3$--balls in
different components of~$\p W_0$.  Now cap $W_1$ with a concave
filling of $\p W_1$ as provided by \cite{EtnyreHonda:cobordisms}:
this produces a closed symplectic
manifold $(W,\omega)$, which contains both of the components of
$\p W_0$ as non-separating contact hypersurfaces
(see Figure~\ref{fig:Etnyre}).
\end{Ex}

\begin{figure}[hbt]
 \centering
 \includegraphics[scale=0.25, bb=0 0 998 300]{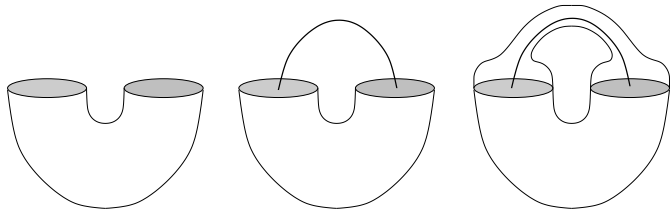}
 \caption{\label{fig:Etnyre} The construction from 
 Example~\ref{ex:EtnyresExample} of a symplectic manifold with non-separating
 contact hypersurfaces.}
\end{figure}

The example demonstrates that $(M,\xi)$ can occur as a 
non-separating hypersurface
in some closed symplectic manifold
whenever it arises from a convex filling with disconnected boundary.
There are, however, contact manifolds that never arise in this way:
McDuff \cite{McDuff:boundaries} showed that this is the case for the
tight $3$--sphere, and the result was generalized by Etnyre 
\cite{Etnyre:planar} to all planar contact manifolds, i.e.~{those which}
are supported by planar open books.  The latter suggests that planar open books
may provide an obstruction to non-separating contact embeddings,
and this is indeed true due to Theorem~\ref{thm:consequence}.
As we'll see shortly, there are also non-planar contact manifolds
(e.g.~the standard contact $3$--torus) which
satisfy the assumptions of Theorem~\ref{thm:consequence}, and thus also
the following corollary:

\begin{Cor}
\label{cor:semifillings}
Given the assumptions of Theorem~\ref{thm:consequence} (see also Remark \ref{rmk:generalizing_main_thm}), every
convex semifilling of $(M,\xi)$ has connected boundary.
\end{Cor}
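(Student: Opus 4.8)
The plan is to deduce this from Theorem~\ref{thm:consequence} by running the construction of Example~\ref{ex:EtnyresExample}. Let $(W_0,\omega_0)$ be a convex semifilling of $(M,\xi)$, so $W_0$ is connected and $(M,\xi)$ is a component of its convex boundary, and suppose for contradiction that $\p W_0$ has a second component $M'$. First I would, exactly as in Example~\ref{ex:EtnyresExample}, attach a symplectic $1$--handle along a pair of balls $B\subset M$ and $B'\subset M'$ to obtain a compact symplectic manifold $(W_1,\omega_1)$ with convex boundary, in which the two components $M,M'$ of $\p W_0$ are replaced by the single component $M\#M'$ and all other components are left intact; then I would glue on a concave filling of $\p W_1$, which exists by \cite{EtnyreHonda:cobordisms}. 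The outcome is a closed symplectic $4$--manifold $(W,\omega)$.

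Next I would exhibit inside $(W,\omega)$ a non-separating contact type hypersurface contactomorphic to $(M,\xi)$. Fix a collar $(-1,0]\times M\subset W_0$ of $M=\{0\}\times M$ on which the Liouville vector field $Y$ defining $(M,\xi)$ is given and points outward along $M$. Since the $1$--handle is glued to $W_0$ along the ball $\{0\}\times B$ (and a ball in $M'$), for small $\epsilon>0$ the pushoff $M_\epsilon:=\{-\epsilon\}\times M$ lies in the interior of $W_1\subset W$, is disjoint from both the handle and the cap, and is still transverse to $Y$; hence $M_\epsilon$ is a contact type hypersurface of $(W,\omega)$ contactomorphic to $(M,\xi)$. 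It remains to see that $M_\epsilon$ does not separate $W$. It splits the collar into the thin slab $(-\epsilon,0]\times M$ and its complement $W_0\setminus((-\epsilon,0]\times M)$; one foot of the $1$--handle, $\{0\}\times B$, lies on the slab side, while the other foot lies near $M'$ and hence on the complement side, so the handle joins these two regions and $W\setminus M_\epsilon$ is connected. Equivalently, one can write down a loop in $W$ meeting $M_\epsilon$ transversally in a single point---it passes once through the handle and otherwise stays in $W_0$, which is connected---so $[M_\epsilon]\neq 0$ in $H_3(W;\Z/2)$.

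This contradicts Theorem~\ref{thm:consequence}, and therefore $\p W_0$ is connected; the same argument applies verbatim under the hypotheses of Remark~\ref{rmk:generalizing_main_thm}. Since the corollary is little more than Example~\ref{ex:EtnyresExample} read backwards, I do not expect a serious obstacle: the only point needing genuine care is the bookkeeping in the second paragraph, namely that a pushoff of $M$ together with its Liouville vector field survives the handle attachment and capping unscathed, and that the resulting hypersurface $M_\epsilon$ really does fail to separate the closed manifold $W$.
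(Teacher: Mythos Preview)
Your proof is correct and is exactly the argument the paper has in mind: the corollary is stated immediately after Theorem~\ref{thm:consequence} with no explicit proof, and the paper later remarks (in \S\ref{subsec:moreProofs}) that a semifilling with disconnected boundary ``can always be turned into a closed symplectic $4$--manifold containing non-separating contact hypersurfaces'' via Example~\ref{ex:EtnyresExample}, contradicting the theorem---precisely your construction with the pushoff $M_\epsilon$.

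It is worth noting that the paper also advertises, and carries out in \S\ref{subsec:moreProofs}, a slightly more direct argument (stated there for the more general Theorem~\ref{thm:multiple}) that bypasses Theorem~\ref{thm:consequence} and the infinite-chain construction entirely: one simply attaches cylindrical ends to the semifilling $W_0$ itself, runs the moduli space $\mM^\cc_0$ of holomorphic curves coming from property~$(\star)$ on the end over $M$, and observes that since these curves foliate $W_0^\infty$ (Corollary~\ref{cor:foliation}) with all positive punctures going to the $M$-end, some curve must touch the other convex end $M'$ tangentially, violating convexity. This avoids the handle attachment, the cap, and the monotonicity/periodicity bookkeeping, at the cost of invoking the machinery of \S\ref{sec:compactness} directly rather than black-boxing it through Theorem~\ref{thm:consequence}. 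Your route is the natural one given only the statement of Theorem~\ref{thm:consequence}, and the careful verification that $M_\epsilon$ is non-separating is a point the paper leaves entirely to the reader.
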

Actually one can use the same methods to give a slightly simpler proof of 
Corollary~\ref{cor:semifillings} which is independent of the theorem;
we'll do this in \S\ref{sec:proofs}.  

In the case of Giroux torsion, a result of Gay 
\cite{Gay:GirouxTorsion} shows that $(M,\xi)$ does not admit any convex 
fillings,\footnote{An alternative proof closely related to the arguments in
this paper appears in \cite{Wendl:fillable}.} thus Theorem~\ref{thm:consequence}
has the following stronger consequence:

\begin{Cor}
\label{cor:torsion}
If $(M,\xi)$ has Giroux torsion (possibly after contact surgery),
then it does not admit a contact embedding
into any closed symplectic $4$--manifold.
\end{Cor}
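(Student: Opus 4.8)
The plan is to deduce Corollary~\ref{cor:torsion} by combining Theorem~\ref{thm:consequence} with Gay's non-fillability result, so almost all of the work has been done already. First I would argue by contradiction: suppose $(M,\xi)$ has Giroux torsion (possibly after contact surgery) but nevertheless admits a contact type embedding into some closed symplectic $4$--manifold $(W,\omega)$. By Theorem~\ref{thm:consequence}(1) together with Remark~\ref{rmk:generalizing_main_thm}, which says that the conclusion still holds when property~(1) holds only after contact surgery, this embedding must be separating. Hence $M$ splits $W$ into two compact pieces $W = W_- \cup_M W_+$ glued along $M$, where the Liouville vector field $Y$ near $M$ points outward from $W_+$ and inward to $W_-$.

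The next step is to observe that the component $W_+$, on which $M$ is a convex boundary, is precisely a convex (strong) symplectic filling of $(M,\xi)$: the Liouville vector field $Y$ defined near $M$ exhibits $M = \partial W_+$ as a convex boundary, which is exactly the definition of a strong symplectic filling recalled in Remark~\ref{rmk:generalizing_main_thm} (the case $M_- = \emptyset$). If the torsion property holds only after contact surgery, i.e.~for some $(M',\xi')$ obtained from $(M,\xi)$ by contact connected sums and contact $(-1)$--surgeries, then one uses that these operations produce a symplectic cobordism from $(M,\xi)$ to $(M',\xi')$; stacking this cobordism on top of $W_+$ yields a convex filling of $(M',\xi')$, which is the contact manifold with honest Giroux torsion. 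Either way, we have manufactured a convex symplectic filling of a contact $3$--manifold with Giroux torsion.

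This contradicts Gay's theorem \cite{Gay:GirouxTorsion} (alternatively the proof in \cite{Wendl:fillable}), which asserts that a contact $3$--manifold with Giroux torsion admits no convex symplectic filling at all. The contradiction shows that no such embedding can exist, proving the corollary.

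As for where the difficulty lies: there is essentially no new difficulty in this corollary itself — it is a short formal deduction. All the genuine content is in Theorem~\ref{thm:consequence}, whose proof (via the more general Theorem~\ref{thm:separating}) uses punctured pseudoholomorphic curve techniques, and in Gay's filling obstruction, both of which we are entitled to invoke. The only mild subtlety worth spelling out carefully is the "after contact surgery" bookkeeping in the separating case, namely that one must compose $W_+$ with the surgery cobordism before quoting the non-fillability statement, since Gay's result applies to the surgered manifold $(M',\xi')$ rather than to $(M,\xi)$ directly.
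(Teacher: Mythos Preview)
Your proposal is correct and follows exactly the paper's approach: the paper derives the corollary in a single sentence by combining Theorem~\ref{thm:consequence} (so any embedding separates) with Gay's theorem that Giroux torsion obstructs convex fillings, and you have simply spelled this out in more detail, including the surgery bookkeeping that the paper leaves implicit.
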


Theorem~\ref{thm:consequence} will follow from some more
technical results stated in \S\ref{subsec:star}, which also includes a more
general statement involving contact hypersurfaces in a symplectic manifold
with convex boundary.  The unifying idea can be summarized as follows.
Whenever a non-separating hypersurface $M \subset W$
exists, one can use it to construct a special noncompact symplectic manifold
$(\V,\om)$ with convex boundary~$M$.  We do this by first cutting $W$ open
along $M$ to produce a symplectic cobordism $(V_1,\om)$ {from a concave copy of $M$ to a convex copy of $M$}, and then removing 
the concave boundary by attaching an infinite chain of copies of
$(V_1,\om)$ along matching concave and convex boundaries; a picture of this
construction appears as Figure~\ref{fig:infinitechain} in \S\ref{sec:proofs},
where it is explained in detail.
Now our assumptions on $(W,\om)$ or $(M,\xi)$
guarantee the existence of an embedded holomorphic curve in $(\V,\om)$ with
certain properties: in particular, we'll show in \S\ref{sec:compactness}
that this curve belongs to a smooth and compact $2$--dimensional moduli
space of curves that foliate $(\V,\om)$.  But this would imply that
$(\V,\om)$ is compact, and thus yields a contradiction.

\begin{Rmk}
A contact manifold $(M,\xi)$ is said to be {weakly fillable} if it occurs as
the boundary of a compact symplectic manifold $(W,\omega)$ such that
$\omega|_{\xi} > 0$ on $\p W$.  A fundamental result of Eliashberg
\cite{Eliashberg:diskFilling} and Gromov \cite{Gromov} shows that
{overtwisted} contact manifolds are never weakly fillable: the original
proof is based on the existence of a so-called {Bishop family} of 
pseudoholomorphic disks with
boundary on an overtwisted disk in $\p W$, and derives a contradiction using
Gromov compactness (a complete exposition may be found in
\cite{Zehmisch:diplom}).  In the setting described above,
one can adapt the Eliashberg-Gromov
argument to show that overtwisted contact manifolds do not occur as
hypersurfaces of {weak} contact type in any closed symplectic
manifold.  If we remove the word ``weak'', then this is also implied by
Corollary~\ref{cor:torsion} since overtwisted contact manifolds have
infinite Giroux torsion.
\end{Rmk}

The third condition in Theorem~\ref{thm:consequence} is
satisfied by any contact $3$--manifold that has a contact
embedding into the standard symplectic $\R^4$: indeed, the latter can be 
identified with 
$\C P^2 \setminus \C P^1$, and $\C P^1$ is a symplectically embedded
sphere with self-intersection~$1$.  As Yasha Eliashberg has pointed out to us,
Theorem~\ref{thm:consequence} in this case also morally follows, via the 
infinite chain construction sketched above, from Gromov's classification
\cite{Gromov} of symplectic manifolds that are Euclidean at infinity---one 
just has to be a little more careful in the noncompact setting 
(cf.~Prop.~\ref{prop:uniform_compactness}).
Natural examples are the unit cotangent
bundles of all closed surfaces that admit Lagrangian embeddings into
$\R^4$, i.e.~the torus, and the connected sums of the Klein bottle with 
{a positive number of} oriented surfaces of positive, even genus.  
Further examples of symplectic caps containing nonnegative symplectic spheres
have appeared in the work of Ohta-Ono et al \cites{OhtaOno:simpleSingularities,
BhupalOno} on contact manifolds obtained from algebraic surface singularities.

We now explain the notion of a partially planar contact manifold,
which is due to the third author (see \cite{Wendl:fiberSums}).
Recall that an open book decomposition for $M$ consists of the data $(B,\pi)$ 
where $B\subset M$ is an oriented link, and $\pi:M \setminus B \to S^1$
is a fibration for which each fiber $\pi^{-1}(\mbox{point})$ is an 
embedded surface whose closure in $M$ has oriented boundary~$B$.  These fibers
are called the {pages} of the open book $(B,\pi)$, and 
$B$ is called the {binding}.  We recall the following important concept  
introduced by Giroux \cite{Giroux:openbook}. 

\begin{Def}\label{def:supporting open book}
A contact structure $\xi$ on $M$ is said to be supported by an open 
book decomposition $(B,\pi)$ if it admits a contact form $\lambda$ 
such that the associated Reeb vector field is positively transverse to 
the pages and is positively tangent to the link~$B$.  
\end{Def}

In particular, the component circles of $B$ are closed Reeb orbits for such a 
contact form $\lambda$.  These are referred to as the {binding orbits}.  

\begin{Def}\label{def:planar}
A contact manifold $(M,\xi)$ is said to be {planar} if it admits a supporting
open book decomposition for which each page has genus zero.   
\end{Def}

Giroux established that every contact structure on a closed 3-manifold 
is supported by some open book decomposition.  Entyre showed in 
\cite{Etnyre:planar} that all overtwisted contact structures are planar,
though not all contact structures are.

The notion of a planar contact manifold can be generalized using
the {contact fiber sum}; the following is a special case of a construction
originally due to Gromov \cite{Gromov:PDRs} and Geiges \cite{Geiges:constructions}
(see also \cite{Geiges:book}).  For $i=1,2$, suppose 
$(M_i,\xi_i)$ are contact manifolds with supporting open book decompositions
$\pi_i : M_i \setminus B_i \to S^1$, and $\gamma_i \subset B_i$ are
connected components of the bindings.  Each $\gamma_i$ is a transverse knot,
thus one can identify neighborhoods $\nN(\gamma_i)$ with solid tori
via an orientation preserving map
$$
\Phi : \nN(\gamma_1) \cup \nN(\gamma_2) \to S^1 \times \D,
$$
thus defining coordinates $(\theta,\rho,\phi)$, where $\theta \in S^1$
and $(\rho,\phi)$ are polar coordinates on~$\D$ (for simplicity we shall
take $\phi \in S^1 = \R / \Z$, thus the actual angle is this times $2\pi$).
We will assume
without loss of generality (and perhaps after a small isotopy of the open
books) that these coordinates have the following properties:
\begin{enumerate}
\item
The contact structure $\xi_i$ is the kernel of
$\lambda_i = f(\rho)\ d\theta + g(\rho)\ d\phi$ for some pair of
functions $f$ and $g$ with $f(0) > 0$ and $g(0) = 0$.
\item
The pages of $\pi_i$ have the form $\{ \phi = \text{const} \}$ near~$\gamma_i$.
\end{enumerate}
Note that the contact condition requires $f(\rho)g'(\rho) - f'(\rho)g(\rho)
> 0$ for $\rho > 0$ and $g''(0) > 0$.
Using these choices, a new contact manifold
$$
(M_1,\xi_1) \#_\Phi (M_2,\xi_2)
$$
can be defined in two steps:
\begin{enumerate}
\item[(i)]
Modify $(M_i,\xi_i)$ by ``blowing up'' $\gamma_i$ to produce a contact manifold
$(\widehat{M}_i,\hat{\xi}_i)$ with pre-Lagrangian torus boundary: we do this
by removing a solid torus neighborhood $\{ \rho \le \epsilon\}$ and replacing
it with $S^1 \times [0,\epsilon] \times S^1$ by the natural identification
of the coordinates $(\theta,\rho,\phi) \in S^1 \times [0,\epsilon] \times S^1$.
We also modify $\lambda_i$ for $\rho \in [0,\epsilon)$ to define a smooth
contact form near $\p\widehat{M}_i$ by making $C^0$--small changes to 
$f$ and $g$ so that they become restrictions of even and odd functions 
respectively, with $g'(0) > 0$.  
In terms of the Reeb vector field defined by $\lambda_i$,
the result of this change is to replace the single Reeb orbit originally at
$\{\rho = 0\}$ by a torus $S^1 \times S^1$ foliated by Reeb orbits of the
form $S^1 \times \{\text{pt}\}$.
\item[(ii)]
Attach $(\widehat{M}_1,\hat{\xi}_1)$ to $(\widehat{M}_2,\hat{\xi}_2)$ along their
boundaries as follows: first, define new coordinates $(\hat{\theta},
\hat{\rho},\hat{\phi}) \in S^1 \times \R \times S^1$ 
near $\p\widehat{M}_i$ so that they are the same as
the old coordinates on $\widehat{M}_1$, but on $\widehat{M}_2$ we set
$$
(\hat{\theta},\hat{\rho},\hat{\phi}) := (\theta,-\rho,-\phi),
$$
so $\hat{\rho} \le 0$ near $\p\widehat{M}_2$.  We now attach
$\widehat{M}_1$ to $\widehat{M}_2$ via {a diffeomorphism} such that
$(\hat{\theta},\hat{\rho},\hat{\phi}) \in S^1 \times [-\epsilon,\epsilon]
\times S^1$ become well defined coordinates after attaching.  Our
assumptions on the modified functions $f$ and $g$ imply also that
$f(\hat{\rho})\ d\hat{\theta} + g(\hat{\rho})\ d\hat{\phi}$ gives a
smooth contact form on $M_1 \#_\Phi M_2$ which matches the original
outside the region $\{ \hat{\rho} \in (-\epsilon,\epsilon) \}$.
\end{enumerate}

In a straightforward way, one can generalize this definition to a sum
of two or more open books on contact manifolds
$(M_1,\xi_1),\ldots,(M_N,\xi_N)$ along multiple binding components: then each 
of these components becomes
a boundary component in its respective ``blown up'' manifold $\widehat{M}_i$, and
it becomes a special pre-Lagrangian torus in the sum 
$$
\#_\Phi (M_i,\xi_i).
$$

\begin{Def}
\label{def:partiallyPlanar}
We say that $(M,\xi)$ is {partially planar} if it can be constructed in the above manner
as a contact fiber sum along binding orbits of open
book decompositions, at least one of which is planar.
\end{Def}

Obviously, every planar contact manifold is also partially planar.  Since
there exist contact $3$--manifolds that admit semifillings with disconnected
boundary, a consequence of Corollary~\ref{cor:semifillings} is now the 
following:
\begin{Cor}\label{cor:partiallyPlanar}
Not every contact manifold is partially planar.  
\end{Cor}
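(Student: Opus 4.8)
The plan is to derive Corollary~\ref{cor:partiallyPlanar} purely formally from Corollary~\ref{cor:semifillings}, using the fact that contact $3$--manifolds admitting convex semifillings with disconnected boundary exist at all. Concretely, I would recall from Example~\ref{ex:EtnyresExample} (originally due to McDuff \cite{McDuff:boundaries}) that there is a compact symplectic $4$--manifold $(W_0,\omega_0)$ whose convex boundary has at least two connected components. Fix one such component $(M,\xi)$; then $(W_0,\omega_0)$ is by definition a convex semifilling of $(M,\xi)$ whose boundary is disconnected.

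Now suppose, for contradiction, that every closed contact $3$--manifold is partially planar. Then in particular $(M,\xi)$ is partially planar, so it satisfies hypothesis~(2) of Theorem~\ref{thm:consequence}, and Corollary~\ref{cor:semifillings} applies to it. But Corollary~\ref{cor:semifillings} asserts that every convex semifilling of $(M,\xi)$ has connected boundary, which directly contradicts the existence of $(W_0,\omega_0)$. Hence $(M,\xi)$ is \emph{not} partially planar, proving the corollary. Equivalently, and to make the logic transparent, one may note that a convex semifilling with disconnected boundary is literally the negation of the conclusion of Corollary~\ref{cor:semifillings}, so the two statements are incompatible for a single contact manifold.

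I expect there to be no genuine obstacle at this stage: all the substance is already contained in Corollary~\ref{cor:semifillings}, and ultimately in the holomorphic-curve arguments of \S\ref{sec:compactness} behind Theorem~\ref{thm:consequence}. The only points that need a (routine) check are that McDuff's construction produces a genuine contact-type, i.e.\ convex, boundary rather than merely one of weak contact type---this is exactly the content of \cite{McDuff:boundaries}---and that partial planarity is indeed preserved under nothing more than being one of the boundary components, which is immediate from Definition~\ref{def:partiallyPlanar}. As an alternative route that avoids invoking Corollary~\ref{cor:semifillings}, one could instead attach a symplectic $1$--handle to $(W_0,\omega_0)$ joining two distinct components of $\p W_0$ and cap off the result exactly as in Example~\ref{ex:EtnyresExample}, obtaining a closed symplectic $4$--manifold in which $(M,\xi)$ sits as a \emph{non-separating} contact hypersurface; this contradicts Theorem~\ref{thm:consequence}(2) and again forces $(M,\xi)$ to fail partial planarity.
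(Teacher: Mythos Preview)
Your argument is correct and matches the paper's own reasoning: the text immediately preceding the corollary observes that since there exist contact $3$--manifolds admitting convex semifillings with disconnected boundary (McDuff), Corollary~\ref{cor:semifillings} forces such manifolds to be non--partially-planar. Your alternative route via Example~\ref{ex:EtnyresExample} and Theorem~\ref{thm:consequence}(2) is also valid and is essentially how the paper packages the same content in Example~\ref{ex:notPartiallyPlanar}.
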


\begin{Ex}{\label{ex:notPartiallyPlanar}}
McDuff showed in \cite{McDuff:boundaries} that for any closed oriented surface
$\Sigma$ of genus at least two, if $ST^*\Sigma$ denotes the unit cotangent
bundle, then there is a symplectic structure on $[0,1] \times ST^*\Sigma$ which
is convex on the boundary and
induces the canonical contact structure at $\{1\} \times S T^*\Sigma$.
More generally, Geiges \cite{Geiges:disconnected} constructed a class of
closed $3$--manifolds $M$ which admit pairs of 
contact forms $\lambda_\pm$ such that 
$$
\lambda_+ \wedge d\lambda_+ = -\lambda_- \wedge d\lambda_- > 0
\quad\text{ and }\quad
\lambda_+ \wedge d\lambda_- = \lambda_- \wedge d\lambda_+ = 0.
$$
In this situation, $[0,1] \times M$ admits a symplectic structure such that
both boundary components are convex, giving a convex filling of
$(M,\ker\lambda_+) \sqcup (-M,\ker\lambda_-)$.  It follows from
Corollary~\ref{cor:semifillings} that none of these contact manifolds
are partially planar.  Moreover by Example~\ref{ex:EtnyresExample}, each
of them admits a non-separating contact type embedding into some closed
symplectic manifold.
\end{Ex}

The next example shows that there are also partially
planar contact manifolds that are not planar.

\begin{Ex}\label{ex:T3}
The standard contact $S^1 \times S^2$ is planar: it admits a supporting 
open book decomposition with two binding orbits connected by cylindrical
pages.  If we take two copies of this, pair up both of their respective
binding components and construct the fiber sum, we obtain the standard
contact $T^3$, which is not planar 
{due to a result of Etnyre \cite{Etnyre:planar}}.
In fact, each of the tight contact tori $(T^3,\xi_n)$, where
$\xi_n = \ker\left[ \cos(2\pi n \theta)\ dx + \sin(2\pi n\theta)\ dy \right]$
in coordinates $(x,y,\theta) \in S^1 \times S^1\times S^1$, can be obtained
as a fiber sum of $2n$ copies of the standard $S^1 \times S^2$; 
see Figure~\ref{fig:T3}.  By a result of Kanda \cite{Kanda:torus}, this
includes every tight contact structure on~$T^3$.
\end{Ex}

\begin{figure}[hbt]
\includegraphics{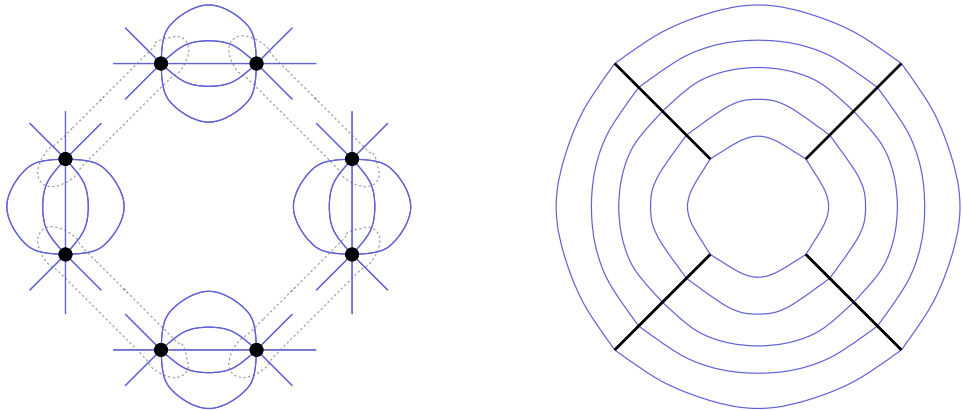}
\caption{\label{fig:T3} At left, we see four copies of the tight 
$S^1 \times S^2$, represented by open books with two binding components
and cylindrical pages.  For each dotted oval surrounding two binding
components, we construct the contact fiber sum to produce the manifold
at right, containing four special pre-Lagrangian tori (the black line segments)
that separate regions foliated by cylinders.  The result is the tight $3$--torus
$(T^3,\xi_2)$.  In general, one can construct $(T^3,\xi_n)$ from $2n$ copies of
the tight $S^1 \times S^2$.}
\end{figure}

By the above example, every contact structure on $T^3$ is partially planar.
In fact, other than the standard torus $(T^3,\xi_1)$, all contact $3$--tori
also have Giroux torsion, thus $\xi_1$ is the only convex fillable contact 
structure on~$T^3$.  Theorem~\ref{thm:consequence} therefore implies
that every contact type embedding of $T^3$ into a closed symplectic
$4$--manifold separates (and the induced contact structure must be~$\xi_1$).
This result is not true for embeddings of weak contact type:
in fact all of the tight tori $(T^3,\xi_n)$ admit weak 
symplectic semifillings with disconnected boundary \cite{Etnyre:private}, 
and thus by the construction in
Example~\ref{ex:EtnyresExample}, they also admit non-separating weakly
contact type embeddings.  

Recall however that if $(W,\omega)$ is a 
weak filling of $(M,\xi)$ and $M$ is a homology
$3$--sphere, then $\omega$ can always be deformed in a collar neighborhood of
$\p W$ to produce a convex filling of $(M,\xi)$; see for instance
\cite{Geiges:book}*{Lemma~6.5.5}.  Thus our results have corresponding
versions for weakly contact hypersurfaces that are homology $3$--spheres.
For example, since the only tight contact structure on $S^3$ is planar,
every weakly contact type embedding of $S^3$ into a closed symplectic
$4$--manifold must separate.

Here is a more general example that also implies the observation made 
above about the $3$--torus.  Let 
$$
\Sigma = \Sigma_+ \cup_\Gamma \Sigma_-
$$
denote any closed 
oriented surface obtained as the union of two nonempty surfaces with
boundary $\Sigma_\pm$ along a multicurve $\Gamma \subset \Sigma$.
By results of Giroux \cite{Giroux:cercles} and
Honda \cite{Honda:tight2}, the manifold $M_\Gamma := S^1 \times \Sigma$
admits a unique (up to isotopy) $S^1$--invariant contact structure
$\xi_\Gamma$ which makes $\Gamma$ the dividing set on 
$\{\text{const}\} \times \Sigma$.  We claim that $(M_\Gamma,\xi_\Gamma)$
is partially planar whenever there exists a connected component of $\Sigma\setminus\Gamma$
having genus zero.  Indeed, for any connected component $\Sigma_0 \subset
\Sigma\setminus\Gamma$, the closure of $S^1 \times \Sigma_0$ may be viewed
as an open book with page $\Sigma_0$ and trivial monodromy, blown up at all
its binding circles; the entirety of $(M_\Gamma,\xi_\Gamma)$ can thus be
obtained by attaching these blown up open books.  (The tight $3$--tori
arise from the case where $\Sigma \cong T^2$ and $\Gamma$ is a union of
parallel curves that are primitive in $H_1(T^2)$.)
Moreover, using Etnyre's obstruction \cite{Etnyre:planar} it is easy to construct many examples $(M_\Gamma,\xi_\Gamma)$ which are partially planar (as just explained) but not planar.
Theorem~\ref{thm:consequence} now implies:

\begin{Cor}
\label{cor:S1invariant}
If $\Sigma\setminus\Gamma$ has a connected component of genus zero, then
the $S^1$--invariant contact manifold $(S^1 \times \Sigma,\xi_\Gamma)$
does not admit any non-separating contact type embeddings into closed
symplectic $4$--manifolds.
\end{Cor}

Finally, the following demonstrates that in some settings
where non-separating hypersurfaces can be embedded \emph{smoothly}, 
they can never be contact type. In contrast to Theorem \ref{thm:consequence}, here the assumptions are on the ambient symplectic 4-manifold and not the contact manifold.

\begin{Thm}\label{thm:rationalRuled}
If the closed and connected symplectic 4-manifold $(W,\omega)$ contains a 
symplectically embedded sphere $S \subset W$ with self-intersection number
$S \bullet S \ge 0$, then every closed contact type hypersurface in $W$ is
separating.
\end{Thm}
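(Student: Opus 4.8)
The plan is to argue by contradiction, via the ``infinite chain'' construction of \S\ref{sec:proofs}. Suppose $W$ contains a closed contact type hypersurface $M$ that does not separate; passing to a connected non-separating component we may assume $M$ is connected, so that $[M]\neq 0$ in $H_3(W;\Z)$. \emph{First I would reduce to the case $S\bullet S = 0$.} Note that $S$ cannot be contained in $M$: if $Y$ is a Liouville field near $M$ and $\alpha = \iota_Y\om|_M$, then $\om|_{TM} = d(\iota_Y\om)|_{TM} = d\alpha$, so any closed surface inside $M$ would have $\int\om = \int d\alpha = 0$, contradicting that $S$ is symplectic. Hence $S\setminus M$ is a nonempty open subset of $S$, and one may perform a symplectic blow-up at a point of $S\setminus M$ inside a Darboux ball adapted to $S$ and disjoint from $M$. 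The result is again a closed connected symplectic $4$-manifold containing $M$ as a non-separating contact type hypersurface (the blow-up is supported away from $M$, and $H_3$ is unchanged), together with the proper transform of $S$, a symplectically embedded sphere of self-intersection one lower. Iterating, I may assume $S\bullet S = 0$, whence the adjunction formula gives $c_1([S]) = 2$.

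\emph{Next, the chain and the seed curve.} Following \S\ref{sec:proofs} (see Figure~\ref{fig:infinitechain}), cut $W$ open along $M$ to obtain a symplectic cobordism $V_1$ from a concave copy of $M$ to a convex copy, and glue together infinitely many copies of $V_1$ end to end to obtain a noncompact symplectic manifold $(\V,\om)$ with convex boundary $M$ and a periodic end. Then $\V$ is ``half'' of the infinite cyclic cover $\widehat W\to W$ associated to the \Poincare dual of $[M]$, and the generating deck transformation $\tau$ carries $\V$ into itself. Since $S$ is simply connected, its inclusion into $W$ lifts to $\widehat W$; the lift is a compact embedded symplectic sphere, and after applying a large enough power of $\tau$ I may assume it lies in $\V$ and is disjoint from $\partial\V$. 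Call it $S'$: it is embedded and symplectic with $S'\bullet S' = 0$, $[S']\neq 0$ (it has positive $\om$-area), and $c_1([S']) = 2$.

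\emph{Then the moduli space of spheres.} Choose an $\om$-compatible $J$ on $\V$ making $S'$ holomorphic and adjusted to the contact structure both near $\partial\V$ and near the slices of the chain that lie far from $S'$, and let $\M$ be the moduli space of $J$-holomorphic spheres homologous to $S'$. Automatic transversality in dimension four (available since $c_1([S']) = 2$) makes every embedded curve in $\M$ Fredholm regular, so $\M$ is a smooth $2$-manifold, and positivity of intersections (using $[S']\bullet[S'] = 0$) forces distinct curves in $\M$ to be disjoint. The crucial input, which I would take from \S\ref{sec:compactness} (cf.~Proposition~\ref{prop:uniform_compactness}), is that the component $\M_0\ni S'$ is compact: a curve cannot touch the convex boundary (maximum principle); it cannot escape to infinity along the periodic end (all curves carry the same energy $\int u^*\om = \om([S'])$, and the translation-invariant geometry at infinity confines a holomorphic curve of bounded energy to a compact region); and it cannot bubble (adjunction together with positivity of intersections excludes any splitting of the class $[S']$ and keeps the limit embedded, exactly as in McDuff's analysis of ruled surfaces).

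\emph{Finally the contradiction.} The universal curve over $\M_0$ is then a compact $4$-manifold without boundary, and the evaluation map from it to $\V\setminus\partial\V$ is a local diffeomorphism --- a submersion by transversality, since the normal bundle of each curve is the trivial bundle over $S^2$ --- which is injective by positivity of intersections. Its image is therefore open, closed and nonempty in the connected manifold $\V\setminus\partial\V$, hence equal to it, so $\V\setminus\partial\V$ is compact. But $\V$ is an infinite chain of copies of $V_1$ and is visibly noncompact --- contradiction. I expect the main obstacle to be precisely the compactness of $\M_0$ in the previous step: ruling out escape to infinity along the periodic end has no counterpart in the closed-manifold situation, and the absence of bubbling rests on the adjunction and intersection-positivity arguments. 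Once the seed sphere $S'$ has been produced, everything else is formal.
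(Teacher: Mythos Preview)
Your approach is essentially the paper's: the infinite chain $\V$, the lift of the sphere via simple connectedness, the moduli space of spheres in the class $[S']$, confinement to a compact region by monotonicity and convexity, and the foliation argument giving compactness of~$\V$. That skeleton is correct.

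The genuine gap is your claim that bubbling is excluded. It is not. If $E \subset \V$ is an exceptional sphere with $[E]\bullet[S'] = 0$ (as happens e.g.\ when $E$ arises from one of \emph{your own} blow-ups, performed in a Darboux ball disjoint from a sphere homologous to~$S'$), then writing $[S'] = [E] + ([S']-[E])$ one computes $([S']-[E])^2 = -1$ and $c_1([S']-[E]) = 1$, so $[S']$ splits as a sum of two exceptional classes. A sequence in $\mM_0$ can and generically will degenerate to such a nodal pair; nothing in adjunction or positivity of intersections prevents this. McDuff's analysis of ruled surfaces does not exclude these nodal fibers --- it \emph{allows} them and shows they are isolated. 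Consequently your ``universal curve over $\mM_0$ is a compact $4$--manifold and the evaluation map is a local diffeomorphism'' step fails as stated: the compactified moduli space has finitely many nodal points, and the evaluation from the universal curve is a local diffeomorphism only away from the singular fibers.

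The paper repairs this in two moves. First, one needs $J$ generic so that every somewhere injective curve has nonnegative index; since $J$ must simultaneously be periodic along the chain (for the monotonicity/confinement argument), this requires a separate argument --- Lemma~\ref{lemma:genericity} --- showing that generic periodic $J$ exist. Second, with such a $J$, Theorem~\ref{thm:compactness} shows that the only nodal limits are pairs of embedded index--$0$ spheres meeting at one point, and there are only finitely many such configurations. One then argues (Corollary~\ref{cor:foliation}) that the smooth curves in $\mM^\cc_0$ fill the complement of these finitely many singular fibers, which still forces $\V$ to be compact. Your outline becomes a proof once you replace ``no bubbling'' by ``finitely many controlled nodal fibers, for generic periodic~$J$'' and adjust the final foliation step accordingly.
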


The reason for this is closely related to McDuff's results
\cite{McDuff:rationalRuled}, which imply that $(W,\omega)$ in this situation
is always rational or ruled (up to symplectic blowup).  In fact, the case where 
$S \bullet S > 0$
follows immediately from \cite{McDuff:rationalRuled}, which shows that $W$ is 
then a blowup of either $S^2 \times S^2$ or $\C P^2$ and thus 
simply connected, so it does not admit non-separating hypersurfaces at all
(contact or otherwise).  The case $S \bullet S = 0$ is more interesting:
the key fact here is that one can choose a
compatible almost complex structure $J$ for which any given contact hypersurface
$M \subset W$ is $J$--convex, and $W$ is foliated by a family of
embedded $J$--holomorphic spheres (possibly including some isolated nodal
spheres unless $(W,\omega)$ is minimal).  If $M$ does not separate, then
there exists a connected infinite cover $(\widetilde{W},\tilde{J})$ of
$(W,J)$, constructed by gluing together infinitely many copies of
$W \setminus M$ in a sequence.  Now the $J$--holomorphic spheres in
$W$ lift to $\widetilde{W}$ and form a foliation, which must include a
$J$--holomorphic sphere that touches a lift of $M$ tangentially from below,
violating $J$--convexity.  That's a quick sketch of the proof---we'll give an alternative
proof in \S\ref{sec:proofs} that fits into a usefully generalized context and
doesn't assume the results of \cite{McDuff:rationalRuled}.
There are obvious examples of
smoothly embedded non-separating hypersurfaces in ruled surfaces,
e.g.~$\ell \times S^2 \subset \Sigma \times S^2$, where $\Sigma$ is any closed
oriented surface of positive genus
and $\ell \subset \Sigma$ is a non-separating closed curve.  It follows
that a hypersurface isotopic to this one is never contact type.

\subsection{Open questions}

Let $\Contact$ denote the collection of closed $3$--manifolds with positive,
cooriented contact structures, and consider the inclusions
$$
\Nonsep \subsetneq \Embed \subsetneq \Contact,
$$
where $\Embed$ denotes all $(M,\xi) \in \Contact$ that admit a contact type
embedding into {some} closed symplectic manifold, and $\Nonsep$ denotes those
that admit a non-separating embedding.  The results stated in 
\S\ref{subsec:main} imply that both inclusions are proper.

Observe that if $(M,\xi)$ is convex fillable then it is also in
$\Embed$, since a filling can always be capped to produce a closed
symplectic manifold.  Conversely, if $(M,\xi)$ admits a \emph{separating} 
contact type embedding, then it is fillable.  While 
the same is not strictly true for a non-separating embedding, the 
construction depicted in 
Figure~\ref{fig:infinitechain} of \S\ref{sec:proofs} can be viewed as a
filling that is noncompact but \emph{geometrically bounded},
which makes it a good setting for $J$--holomorphic curves.  In this context,
any filling obstruction that involves $J$--holomorphic curves can
also serve as an obstruction to non-separating contact embeddings
(cf.~Corollary~\ref{cor:torsion}), thus implying that $(M,\xi) \not\in \Embed$.
This motivates the conjecture that, in fact, $\Embed$ is the same
as the set of convex fillable contact $3$--manifolds.

\begin{Conj}
If $(M,\xi)$ is not convex fillable, then it admits no contact type
embeddings into any closed symplectic manifold.
\end{Conj}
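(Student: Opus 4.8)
The plan is to lay out a program for the Conjecture; since it is open I will indicate where a genuine proof would stand or fall, and that step turns out to be essentially the still-unsolved problem of detecting convex fillability in dimension three.

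\textbf{Step 1: reformulation as a self-cobordism problem.} Suppose $(M,\xi)$ admits a contact type embedding into a closed symplectic $4$--manifold $(W,\om)$. If the embedding is separating, then the side on which the Liouville vector field points outward is a convex filling, so $(M,\xi)$ is convex fillable and there is nothing to prove. If it is non-separating, then $W\setminus M$ is connected, and cutting $W$ open along $M$ as in \S\ref{sec:proofs} yields a connected compact symplectic cobordism $V_1$ whose concave and convex boundary components are each a copy of $(M,\xi)$. Conversely, any connected compact symplectic cobordism $V_1$ from $(M,\xi)$ (concave) to $(M,\xi)$ (convex) can be closed up into such a $W$ by gluing its concave end to its convex end via a contactomorphism: matching a concave and a convex copy of the same contact manifold produces a smooth symplectic form exactly as in the standard completion picture, and the image of the glued boundary is a non-separating contact hypersurface since $\mathrm{int}(V_1)$ is connected. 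Thus $(M,\xi)\in\Nonsep$ if and only if $(M,\xi)$ bounds a connected symplectic self-cobordism of this type, and the Conjecture reduces to: a contact manifold that is not convex fillable admits no such self-cobordism.

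\textbf{Step 2: the geometrically bounded filling and curve-theoretic obstructions.} Given a self-cobordism $V_1$, stack infinitely many copies of it end to end to obtain the noncompact, geometrically bounded symplectic manifold $\V$ with convex boundary $(M,\xi)$ of Figure~\ref{fig:infinitechain}. On $\V$ one has the usual $J$--holomorphic curve machinery with SFT-type compactness and transversality, which is precisely the mechanism behind Theorem~\ref{thm:consequence} and Corollary~\ref{cor:torsion}. So the task becomes: produce in $(M,\xi)$ or in $\V$ a family of holomorphic curves whose compactness forces $\V$ to be compact, contradicting the infinite-chain construction. Any filling obstruction for $(M,\xi)$ that is established by such curve-counting --- the Bishop family for overtwisted contact manifolds \cite{Gromov}, the curves used here and in \cite{Gay:GirouxTorsion} for Giroux torsion, the (partially) planar torsion curves, and more generally ``spinal open book'' or algebraic-torsion type obstructions --- has a good chance of transplanting to $\V$, since those arguments never use compactness of the filling beyond the energy and $C^0$ bounds that $\V$ already supplies. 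Carrying this out obstruction by obstruction would already extend Theorem~\ref{thm:consequence} considerably beyond its present three cases (cf.\ Example~\ref{ex:EtnyresExample}, which shows that the fillable hypotheses cannot simply be dropped).

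\textbf{Step 3: the crux --- a converse certifying non-fillability by curves.} To obtain the full Conjecture one needs the statement that \emph{every} non-convex-fillable contact $3$--manifold carries at least one holomorphic-curve filling obstruction of the robust, geometrically-bounded type used in Step 2. This is exactly the step I expect to be the real obstacle: no such characterisation of non-fillability is known, and it is at least as hard as the open problem of determining which contact $3$--manifolds are convex fillable. The natural framework in which to attack it is Symplectic Field Theory with its algebraic torsion invariants (or Embedded Contact Homology with its cobordism maps): a self-cobordism $V_1$ induces an endomorphism of the relevant invariant of $(M,\xi)$, and iterating it together with the compactness supplied by $\V$ should force an algebraic identity --- for instance that some power of the induced map annihilates the contact class, or that some algebraic torsion order is finite --- which is already known to obstruct \emph{honest} fillings. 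The Conjecture would follow if one could show that non-fillability always manifests as precisely such an algebraic phenomenon; establishing that implication, rather than any of the curve analysis, is the heart of the problem. Pending it, the Conjecture should be pursued class by class, each newly proved curve-theoretic filling obstruction automatically upgrading, via the infinite-chain construction, to an obstruction against membership in $\Nonsep$ and hence in $\Embed$.
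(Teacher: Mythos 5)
This statement is an open conjecture in the paper; the authors offer no proof, so there is nothing to compare your argument against. Your write-up correctly treats it as such: Steps 1 and 2 are an accurate reformulation (non-separating embeddings correspond to connected symplectic self-cobordisms of $(M,\xi)$, which stack into the geometrically bounded noncompact filling of Figure~\ref{fig:infinitechain}) together with a restatement of the paper's own motivating heuristic from \S\ref{subsec:main}ff., namely that any $J$--holomorphic filling obstruction transplants to that setting and hence obstructs membership in $\Embed$. Step 3 correctly isolates the genuinely open ingredient --- that \emph{every} non-convex-fillable contact $3$--manifold carries such a curve-theoretic obstruction --- which neither you nor the paper establishes; so this is an honest program rather than a proof, and the gap you flag is exactly where the conjecture remains open.
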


Equivalently, this would mean there is no contact $3$--manifold that admits
\emph{only} non-separating contact type embeddings.

A more ambitious conjecture would arise from Example~\ref{ex:EtnyresExample},
which is the only method we are yet aware of for constructing non-separating
contact embeddings: $(M,\xi) \in \Nonsep$ whenever it admits a convex
semifilling with disconnected boundary.  The latter class of contact
manifolds is evidently somewhat special, and
one wonders whether it might be equal to $\Nonsep$.

\begin{Question}
Is there a contact $3$--manifold that admits a non-separating contact type
embedding but not a convex semifilling with disconnected boundary?
\end{Question}

Finally, observe that while Theorem~\ref{thm:rationalRuled} rules out the
existence of a non-separating contact hypersurface $(M,\xi) \subset (W,\omega)$
if $(W,\omega)$ is rational or ruled, it still allows the possibility that 
$(M,\xi) \in \Nonsep$ but admits a \emph{separating} embedding into
$(W,\omega)$.  There is some reason to suspect that this could still never
happen.  There are indeed cases where the existence of a contact
embedding of $(M,\xi)$ into some particular symplectic manifold implies
$(M,\xi) \not\in \Nonsep$, e.g.~this is true if $(M,\xi) \hookrightarrow
(\R^4,\omega_0)$.
Moreover, the simplest known example of a manifold in $\Nonsep$, the
unit cotangent bundle of a higher genus surface, has been shown by
Welschinger \cite{Welschinger:effective}
to admit no contact type embeddings into rational or ruled
symplectic $4$--manifolds.

\begin{Question}
Is there a contact $3$--manifold that admits a contact type embedding into
some rational/ruled symplectic $4$--manifold and also admits a
non-separating contact type embedding into some other closed symplectic
manifold?
\end{Question}

\section{Pseudoholomorphic curves in symplectizations}
\subsection{Technical background}
\label{sec:technical}

In this section we collect a number of important technical definitions.
A positive {contact form} on a $3$--manifold $M$ is a $1$--form 
$\lambda$ for which $\lambda\wedge d\lambda > 0$. 
 The $2$-plane distribution 
$\xi:=\ker\lambda$ is then a contact structure.  The equations  $\iota_{X_{\lambda}}d\lambda=0$ and $ \lambda(X_{\lambda})=1$ 
uniquely  determine a vector field $X_{\lambda}$,
called the {Reeb vector field} associated to $\lambda$.  Since $X_{\lambda}$ is everywhere 
transverse to $\xi$, one obtains a splitting $TM=\R X_{\lambda} \oplus \xi$.
Moreover,  $(\xi,d\lambda|_{\xi})$ is a symplectic vector bundle, and the flow
of $X_\lambda$ preserves $\lambda$, hence also $(\xi,d\lambda|_{\xi})$.

A {periodic Reeb orbit} of period $T>0$ for a contact form $\lambda$ is a 
smooth map $\gamma:\R/T\Z \to M$ satisfying $\dot{\gamma}(t)=X_{\lambda}(\gamma(t))$.  
We identify all possible reparametrizations 
$t \mapsto \gamma(t+\hbox{const})$.  A Reeb orbit is called simply covered  
if it has degree $1$ onto its image, i.e. it is an embedding.  {If $\gamma$
covers a simply covered orbit with period~$\tau > 0$, we call $\tau$ the
minimal period of~$\gamma$.}

Since the Reeb flow preserves the symplectic vector bundle 
$(\xi,d\lambda|_{\xi})$, linearizing about a periodic orbit $\gamma$ determines a 
symplectic linear map $d\phi_T(p):\xi_p \to \xi_p$
for each $p$ in the image of $\gamma$.  Then $\gamma$ is said to be 
{nondegenerate} if $1$ is not an eigenvalue of this map; this condition
is independent of the point~$p$.  
More generally, an orbit $\gamma$ of period $T$ is {Morse-Bott} 
if it lies in a submanifold $N\subset M$ foliated by $T$--periodic orbits, 
such that the $1$--eigenspace of $d\phi_T(p)$ is precisely $T_p N$.
We then call $N$ a {Morse-Bott submanifold}.  A contact form 
$\lambda$ is said to be {nondegenerate} if all of its periodic Reeb orbits are 
nondegenerate, and Morse-Bott if every periodic orbit belongs to a
Morse-Bott submanifold.  

Given a symplectic trivialization $\Phi$ of $(\xi,d\lambda)$ along a
$T$--periodic orbit $\gamma$, the linearized flow $d\phi_t(p)$ for
$t \in [0,T]$ defines a continuous family of symplectic matrices, which
has a well defined Conley-Zehnder index if $\gamma$ is nondegenerate: we
denote this index by $\CZ^\Phi(\gamma) \in \Z$.

It is convenient also to express this in terms of asymptotic operators:
associated to any $T$--periodic Reeb orbit $\gamma$ is a linear operator
$\mathbf{A}_\gamma:\Gamma(\x^*\xi) \to \Gamma(\x^*\xi)$,
where $\x:\R/\Z\to M$ is the reparametrization $\x(t):=\gamma(Tt)$.
If $\nabla$ is a 
symmetric connection on $TM$ and $J$ is a complex 
structure on $\xi\rightarrow M$ compatible with the symplectic structure
$d\lambda|_\xi$, then $\mathbf{A}_\gamma$ can be defined on smooth sections by 
\[
        \mathbf{A}_\gamma\eta=-J(\nabla_t\eta - T\nabla_\eta X_\lambda).
\]
This expression is independent of the choice of connection.  Choosing
a unitary trivialization $\Phi$ of $\x^*\xi$, $\mathbf{A}_\gamma$ 
is identified with the operator
\begin{equation}
\label{eqn:asympTriv}
C^\infty(S^1,\R^2) \to C^\infty(S^1,\R^2) : \eta \mapsto -J_0 
\frac{d}{dt}\eta - S\cdot \eta,
\end{equation}
where $S(t)$ is some smooth loop of symmetric $2$--by--$2$ matrices.
Thus the equation $\mathbf{A}_\gamma \eta = 0$ defines a linear Hamiltonian
flow, and one can show that 
the resulting family of symplectic matrices matches the family
obtained from $d\phi_t(p)$.  It follows that $\mathbf{A}_\gamma$ has
trivial kernel if and only if $\gamma$ is nondegenerate, and we can
use the linear Hamiltonian flow determined by \eqref{eqn:asympTriv} 
to define an integer $\CZ^\Phi(\mathbf{A}_\gamma)$, which matches
$\CZ^\Phi(\gamma)$.  The advantage of this definition is that it does not
reference the orbit directly, but makes sense
for any operator that takes the form of \eqref{eqn:asympTriv} in the
trivialization: in particular we can define 
$\CZ^\Phi(\mathbf{A}_\gamma - c) \in \Z$
whenever $c \in \R$ is not an eigenvalue of $\mathbf{A}_\gamma$, even if
$\gamma$ is degenerate.  For this we will use the shorthand notation
$$
\CZ^\Phi(\gamma - c) := \CZ^\Phi(\mathbf{A}_\gamma - c).
$$
We now recall some of the important spectral properties of asymptotic operators.  
For more details and proofs we refer to \cite{HWZ:props2}. 

$\mathbf{A}_\gamma$ extends to an unbounded self-adjoint operator 
on the complexified Hilbert space $L^2(\x^*\xi)$; its spectrum
$\sigma(\mathbf{A}_\gamma)$ consists of
real eigenvalues of multiplicity at most $2$ that accumulate only at infinity.  
Generalizing the statement above about nondegeneracy,
if $\gamma$ belongs to a Morse-Bott submanifold of dimension
$n \in \{1,2,3\}$, then the 
$0$--eigenspace of $\mathbf{A}_\gamma$ is $(n-1)$--dimensional.

Geometric properties of the eigenspaces are closely related to the Conley-Zehnder 
index.  Indeed, any eigenfunction $\eta$ of $\mathbf{A}_\gamma$ has a well
defined winding number $\wind^\Phi(\eta) \in \Z$ relative to the trivialization,
which is independent of the choice of $\eta$ in its eigenspace.
Thus we may 
speak of the winding number $\wind^\Phi(\mu)\in\Z$ for each eigenvalue
$\mu \in \sigma(\mathbf{A}_\gamma)$, and it turns out that the map
$\sigma(\mathbf{A}_\gamma) \to \Z : \mu \mapsto \wind^\Phi(\mu)$ is
non-decreasing and attains every value exactly twice (counting multiplicity).
The following integers
\begin{align*}
 \alpha_{-}^\Phi(\gamma)&:=\max\{\wind^\Phi(\mu)\mid \mu<0\text{ is an eigenvalue of }\mathbf{A}_\gamma\} \\
 \alpha_{+}^\Phi(\gamma)&:=\min\{\wind^\Phi(\mu)\mid \mu>0\text{ is an eigenvalue of }\mathbf{A}_\gamma\} 
\end{align*}
are therefore determined by the eigenfunctions with eigenvalues closest to 
$0$ that are negative and positive respectively.  The number
$p(\gamma) := \alpha^\Phi_+(\gamma) - \alpha^\Phi_-(\gamma)$ is called the 
parity of $\gamma$; it is independent of~$\Phi$ and necessarily equals
$0$ or~$1$ if $\gamma$ is nondegenerate.  More generally, we can replace
$\mathbf{A}_\gamma$ by $\mathbf{A}_\gamma - c$ for some $c \in \R$ and
similarly define $\alpha^\Phi_\pm(\gamma - c)$ and 
$p(\gamma - c)$; then if $c \not\in\sigma(\mathbf{A}_\gamma)$,
a result in \cite{HWZ:props2} implies the relation
\begin{equation}\label{eqn:CZwinding}
\CZ^\Phi(\gamma - c) = 2\alpha_{-}^\Phi(\gamma - c) + p(\gamma - c)
= 2\alpha_{+}^\Phi(\gamma - c) - p(\gamma - c).
\end{equation} 

Observe that every Morse-Bott submanifold of dimension~$2$ admits a nonzero
vector field and is thus either a torus or a Klein bottle.
The following characterization of Morse-Bott tori is a simple consequence of
the spectral properties of $\mathbf{A}_\gamma$ 
(cf.~\cite{Wendl:automatic}*{Prop.~4.1}).

\begin{Prop}
\label{prop:MBtori}
Suppose $\gamma$ is a Morse-Bott periodic orbit of $X_\lambda$ belonging to a
Morse-Bott submanifold $N \subset M$ diffeomorphic to~$T^2$.  Then the
Morse-Bott property is satisfied for all covers of all orbits in~$N$, and
they all have the same minimal period.
\end{Prop}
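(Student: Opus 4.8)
The plan is to reduce the proposition to two facts: (a) that $\dim\ker\mathbf{A}_\delta=1$ for every multiple cover $\delta$ of every orbit contained in $N$, and (b) that all orbits in $N$ share a common minimal period $\tau_0$. Recall from the spectral discussion above that a $2$--dimensional Morse--Bott submanifold is exactly a submanifold foliated by periodic orbits $\gamma'$ all of which satisfy $\dim\ker\mathbf{A}_{\gamma'}=1$. Hence (a), together with (b), exhibits $N$ itself---with each of its leaves traversed $k$ times---as a Morse--Bott submanifold containing the given $k$--fold cover, which is precisely what the proposition asserts.

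For (b), the key observation is that $\lambda|_N$ is closed. Since $N$ is foliated by Reeb orbits, $X_\lambda$ is everywhere tangent to $N$, so the pullback of $d\lambda$ to the surface $N$ has $X_\lambda$ in its kernel at every point; being a $2$--form on a $2$--manifold it vanishes identically, i.e.\ $\alpha:=\lambda|_N$ is closed, and $\alpha(X_\lambda)\equiv 1$ makes it nowhere zero. The orbits foliate $N\cong T^2$ by circles, so by standard facts about circle foliations of the torus (no leaf can be nullhomotopic, so all leaves are isotopic, hence homologous) they all represent one class $h_0\in H_1(N;\Z)$ with respect to the orientation induced by $X_\lambda$. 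The minimal period of the orbit through a point $p$ equals $\int_\ell\lambda$, where $\ell$ is the corresponding embedded leaf (because $\lambda(\dot\gamma)\equiv 1$ along any Reeb orbit), and by closedness of $\alpha$ this is $\langle[\alpha],h_0\rangle=:\tau_0$, independent of~$p$. This proves (b); in particular $\phi_{\tau_0}=\id$ on $N$, so the period $T$ of the given orbit $\gamma$ is $m\tau_0$ for some integer $m\ge 1$, and for every $p\in N$ the linearized return map $P_p:=d\phi_{\tau_0}(p)\colon\xi_p\to\xi_p$ is a well-defined symplectic automorphism of $(\xi_p,d\lambda|_{\xi_p})\cong\mathrm{SL}(2,\R)$.

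For (a), one identifies $\ker\mathbf{A}_\delta$ with fixed vectors of the monodromy exactly as in the discussion relating $\mathbf{A}_\gamma$ to $d\phi_t(p)$ above: for $\delta$ the $k$--fold cover of the orbit through $p$, one has $\dim\ker\mathbf{A}_\delta=\dim\ker\bigl(P_p^{\,k}-\id\bigr)$. The hypothesis that $N$ is a Morse--Bott submanifold means $\dim\ker\bigl(P_p^{\,m}-\id\bigr)=1$ for every $p\in N$; on the other hand, differentiating the family of closed orbits of $N$ in directions transverse to the foliation shows $\ker(P_p-\id)\supseteq T_pN\cap\xi_p$, which is one-dimensional since $X_\lambda(p)\notin\xi_p$. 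As $\ker(P_p-\id)\subseteq\ker(P_p^{\,m}-\id)$, this forces $\dim\ker(P_p-\id)=1$ for every $p\in N$. Now comes the only piece of linear algebra: a matrix $P\in\mathrm{SL}(2,\R)$ with a one-dimensional $1$--eigenspace has characteristic polynomial $(x-1)^2$, hence $P=\id+\nu$ with $\nu\neq0$ and $\nu^2=0$, so $P^k=\id+k\nu$ and $\ker(P^k-\id)=\ker\nu$ remains one-dimensional for all $k\ge1$. Applying this to each $P_p$ gives $\dim\ker\mathbf{A}_\delta=1$ for every cover $\delta$ of every orbit in $N$, completing the proof. (One can equally run this argument purely spectrally with the operators $\mathbf{A}_\gamma$, but it reduces to the same statement about $\mathrm{SL}(2,\R)$.)

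The essential step---and the only non-formal one---is the observation $d(\lambda|_N)=0$, which converts the minimal period into the homological pairing $\langle[\lambda|_N],h_0\rangle$ and thereby trivializes the claim that it is constant along $N$; everything after that is a short computation with $2\times2$ matrices. One should also be slightly careful about the meaning of ``Morse--Bott submanifold'': the transverse-nondegeneracy condition must be assumed at every point of $N$, not merely along the single orbit $\gamma$, since a priori it need not propagate from one orbit of $N$ to the others.
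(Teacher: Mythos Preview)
Your proof is correct. The paper does not actually give a proof here; it only says the result is ``a simple consequence of the spectral properties of $\mathbf{A}_\gamma$'' and cites \cite{Wendl:automatic}*{Prop.~4.1}. Your argument is a self-contained, more elementary route to the same conclusion. For the constancy of the minimal period you use the observation that $d\lambda|_N=0$ (since $X_\lambda$ lies in its kernel and is tangent to~$N$), turning the period into the pairing $\langle[\lambda|_N],h_0\rangle$; for the Morse--Bott property of covers you work directly with the linearized return map $P_p\in\mathrm{SL}(2,\R)$ and the fact that a nontrivial unipotent has all powers nontrivial unipotent. The spectral approach alluded to in the paper would instead track the $0$--eigenspace of $\mathbf{A}_\gamma$ and its behavior under covers via the winding-number/eigenvalue correspondence, but---as you yourself remark---this reduces to the same $2\times 2$ linear algebra. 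Your closing caveat is apt: the paper's definition of ``Morse--Bott submanifold'' is slightly informal about whether the $1$--eigenspace condition is imposed only along $\gamma$ or at every $p\in N$; the standard (and intended) reading is the latter, which is exactly what your argument uses.
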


We will also need a relative version of the standard genericity result for
nondegenerate contact forms.

\begin{Lemma}
\label{lemma:nondegeneracy}
Suppose $N \subset M$ is a union of $2$--tori which are Morse-Bott submanifolds
for some contact form~$\lambda_0$.  Then for any $T_0 > 0$, there exists an
arbitrarily small perturbation $\lambda$ of $\lambda_0$ such that
$\lambda=\lambda_0$ on a neighborhood of $N$ and every periodic orbit of
$X_{\lambda}$ with period less than $T_0$ is Morse-Bott.
\end{Lemma}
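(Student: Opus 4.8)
The plan is to combine a standard Sard--Smale argument with a careful localization so that the perturbation can be kept fixed near the Morse--Bott tori~$N$. First I would fix a small open neighborhood $\U$ of $N$ on which $\lambda_0$ already has the desired property by hypothesis, and choose a cutoff so that all perturbations live in a Banach space of $1$--forms supported in $M \setminus \overline{\U'}$ for a slightly smaller neighborhood $\U' \Subset \U$. I would then consider the space of contact forms of the form $\lambda = e^{f}\lambda_0 + \beta$, or more simply $\lambda = \lambda_0 + \tau$ with $\tau$ a $C^\infty$-small $1$--form supported away from $N$; since the contact condition $\lambda \wedge d\lambda > 0$ is open, every sufficiently small such $\tau$ gives a genuine contact form, and $\lambda \equiv \lambda_0$ near~$N$ by construction.

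Next I would set up the universal moduli space of periodic Reeb orbits of period less than $T_0$. The usual device is to write a periodic orbit as a zero of the section $(\gamma,\lambda) \mapsto \dot\gamma - X_\lambda(\gamma)$ over the bundle of loops $\gamma : \R/T\Z \to M$ (with $T < T_0$ a free parameter, or after rescaling, loops in $\R/\Z$ with a period parameter), and to observe that the orbits we have \emph{not} excluded are precisely those meeting the open set $M \setminus \overline{\U}$; any $T_0$--periodic orbit contained entirely in $\U$ is already Morse--Bott and need not be perturbed. For an orbit $\gamma$ that enters $M \setminus \overline{\U'}$, the linearization of the defining section in the $\tau$--direction is surjective: varying $X_\lambda$ freely at a single point of the orbit lying in the support region suffices to hit any element of the relevant cokernel, which is the standard transversality computation (the Reeb vector field's dependence on the $1$--form is pointwise and the orbit spends an open time-interval in the free region). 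Hence the universal moduli space $\M^{\mathrm{univ}} = \{(\gamma,\lambda) : \dot\gamma = X_\lambda(\gamma),\ \text{period} < T_0,\ \gamma \text{ meets } M\setminus\overline\U\}$ is a Banach manifold, the projection to the parameter space is Fredholm of index~$0$, and Sard--Smale gives a Baire-generic set of perturbations $\tau$ for which all such orbits are nondegenerate. Orbits staying in $\U$ are Morse--Bott by assumption, so for a generic small $\tau$ every periodic orbit of $X_\lambda$ of period $< T_0$ is Morse--Bott.

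I expect the main obstacle to be bookkeeping the support condition while retaining transversality and compactness of the relevant orbit space. Two points need care: (i) one must ensure that \emph{every} periodic orbit of period $< T_0$ either lies in $\U$ (where it is unperturbed) or genuinely visits the region where perturbations are free---this is automatic since $\U$ is open and the complement of $\overline\U$ is where $\tau$ is allowed to be nonzero, but one should phrase it so that no orbit can ``hide'' in the collar $\overline\U \setminus \U'$; shrinking $\U' \Subset \U$ and noting that an orbit touching $\partial\U$ must leave $\overline{\U'}$ handles this. (ii) To apply Sard--Smale one wants the space of orbits under consideration to be, say, second-countable and the Fredholm setup to hold with a suitable $C^\varepsilon$ or $C^\infty$ Banach manifold of perturbations, which is the routine Floyd/Taubes trick of using a Banach space of $C^k$ forms with the Sard--Smale theorem, then upgrading to $C^\infty$ by a standard intersection-of-Baire-sets argument over increasing $k$ and $T_0$. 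Finally, since Morse--Bottness is preserved under taking covers for tori by Proposition~\ref{prop:MBtori}, and genuine nondegeneracy is likewise a robust condition, the perturbed form $\lambda$ has the claimed property; because all perturbations were $C^\infty$--small and supported away from $N$, we also get $\lambda = \lambda_0$ on a neighborhood of~$N$, completing the argument.
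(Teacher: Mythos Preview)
Your Sard--Smale outline for making orbits that visit the free region nondegenerate is essentially what the paper does (it packages that step as Theorem~\ref{thm:nondegeneracy} in the appendix and cites it), so that part is fine. The gap is in the localization.

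You assert that on a small neighborhood $\U$ of $N$ ``$\lambda_0$ already has the desired property by hypothesis,'' and later that ``any $T_0$--periodic orbit contained entirely in $\U$ is already Morse--Bott.'' Neither follows from the hypothesis, which says only that the tori $N$ are Morse--Bott submanifolds; it says nothing about other orbits that might sit in $\U \setminus N$. Since your perturbation vanishes near $N$, any such orbit of $\lambda_0$ lying in $\U'$ survives unchanged as an orbit of $\lambda$, and your argument gives no reason for it to be Morse--Bott. Your point (i) worries about orbits crossing the collar $\overline{\U}\setminus\U'$, but the actual danger is orbits entirely inside $\U'$ and off~$N$.

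What is missing is exactly the step the paper supplies: the Morse--Bott condition on $N$ (together with Proposition~\ref{prop:MBtori} for multiple covers) means the $1$--eigenspace of the linearized return map is precisely $TN$, so short periodic orbits cannot accumulate on $N$ from the normal direction. Hence for any $T_0$ one can choose $\U$ small enough that $\overline{\U}\setminus N$ contains \emph{no} periodic orbit of $\lambda_0$ with period less than $T_0$. Once you have this, the dichotomy you want holds for the trivial reason that every short orbit in $\overline{\U}$ actually lies in~$N$; then perturb with support in $M\setminus\U$ and your Sard--Smale argument finishes the job.
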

\begin{proof}
Since all orbits in $N$ are Morse-Bott (including all multiple covers,
due to Prop.~\ref{prop:MBtori}), for any $T_0 > 0$ we can find an open 
neighborhood $\U$ of $N$ such that $\overline{\U} \setminus N$ contains 
no periodic orbits with
period less than~$T_0$.  By Theorem~\ref{thm:nondegeneracy} in the appendix,
one can then find a generic small perturbation
of $\lambda_0$ with support in $M \setminus \U$ so that all orbits 
passing through $M \setminus \overline{\U}$ are nondegenerate. 
\end{proof}

We now recall the basic notions of holomorphic curves in symplectizations and
their asymp\-totic properties.
The {symplectization} of a contact manifold $(M,\xi=\ker\lambda)$ is the product space 
$\R\times M$ equipped with the exact symplectic form $d(e^a\lambda)$, where 
$a:\R\times M\to\R$ refers to the $\R$ coordinate.  An almost complex structure $J$ 
on the symplectization is said to be {admissible} if it is $\R$--invariant, 
restricts to the {symplectic vector bundle} $(\xi,d\lambda)$ as a compatible complex 
structure, and satisfies  $J\partial_a=X_\lambda$.
Any admissible $J$ tames the symplectic form $d(e^a\lambda)$, and more generally tames every symplectic form 
$d(\varphi\lambda)$ where $\varphi:\R\to(0,\infty)$ is smooth with $\varphi'>0$.  

A  \emph{pseudoholomorphic} (or \emph{$J$--holomorphic} or simply \emph{holomorphic}) 
curve from a punctured Riemann surface 
$(\dot{\Sigma},j)$, into an almost complex manifold $(W,J)$ is a solution 
$u:\Sigmadot\to W$ to the nonlinear Cauchy-Riemann equation $Tu\circ j=J(u)\circ Tu$.
Here we take $\dot{\Sigma}:=\Sigma\setminus\Gamma$ for some finite 
set of points $\Gamma\subset\Sigma$, where $(\Sigma,j)$ is a closed 
connected Riemann surface.

For the rest of this section, let us consider only the case where the target is the symplectization 
of $(M,\lambda)$, and $J$ is an admissible almost complex structure on $\R\times M$.  
The simplest case of a punctured $J$--holomorphic curve in this setting is the
so-called trivial cylinder 
$$
u : S^2\setminus\{0,\infty\} \cong \R \times S^1 \to \R \times M : (s,t) \mapsto
(Ts,\gamma(Tt)),
$$
where $T > 0$ and $\gamma$ is any $T$--periodic Reeb orbit.
Following \cite{Hofer:weinstein,SFTcompactness}, the energy of 
a $J$--holomorphic curve $u:\Sigmadot\to \R\times M$ can be
defined as follows.  Fix any constant $C>0$, and let  
\begin{equation}\label{equ:energyinsymplectization}
        E(u):=\sup_{\varphi\in\mathcal{T}}\int_{\Sigmadot}u^*d(\varphi\lambda)
\end{equation}
where $\mathcal{T}$ is the set of smooth maps 
$\varphi:\R\to (0,C)$ with $\varphi'>0$.
Since $J$ is compatible with $d(\varphi\lambda)$ for
all $\varphi \in \tT$, the integrand in
\eqref{equ:energyinsymplectization} is always nonnegative, thus $u$ is
constant if and only if its energy vanishes.  Observe that the integrand
of $\int_{\Sigmadot} u^*d\lambda$ is also nonnegative, and this integral is
finite if $u$ has finite energy: it vanishes identically
if and only if $u$ is a branched cover of a trivial cylinder.

\begin{Def}\label{def:finiteenergycurve}
We will say that $u:\Sigmadot\to \R\times M$ is a 
{finite energy $J$--holomorphic curve} if it is proper 
and $E(u)<\infty$.
\end{Def}
Note that properness only fails when there exist punctures having 
neighborhoods which are mapped into a compact set, in which case these
punctures can be removed by Gromov's removable singularity theorem.
Since $d(\varphi\lambda)$ is exact, Stokes' theorem implies that not all 
punctures are removable unless $u$ is constant.

Let us recall now the behaviour of a finite energy $J$--holomorphic curve $u :\Sigmadot\to \R\times M$ in the 
neighborhood of a puncture.  Each puncture $z\in\Gamma$ has a neighborhood on which the $\R$--value of $u$ tends to 
$+\infty$ or $-\infty$, and we say that $z$ is a {positive/negative puncture}
respectively.  Denote the resulting partition into positive and negative punctures by $\Gamma=\Gamma^+\cup\Gamma^-$.
Restricting to a neighborhood of a puncture, we obtain a curve whose domain is
the punctured closed disc, which is biholomorphic to both $Z^+:=[0,\infty)\times S^1$ and $Z^-:=(-\infty,0]\times S^1$ with the standard complex structure.
It is convenient to choose the domain of the restricted curve to be $Z^+$ or $Z^-$ 
for $z\in\Gamma^+$ or $z\in\Gamma^-$ respectively, and we will write $u:Z^\pm\to \R\times M$.
It was shown by Hofer in \cite{Hofer:weinstein} that for any sequence $|s_k|\to\infty$, there 
exists a subsequence such that $u(s_k,\cdot)$ converges in 
$C^\infty(S^1,M)$ to $\gamma(T\cdot)$, where $\gamma$ is a $T$--periodic Reeb
orbit for some $T > 0$.  We say in this case that $u$ is asymptotic to $\gamma$,
and $\gamma$ is an asymptotic orbit of~$u$.

In the following statement, we choose any $\R$--invariant connection on 
$\R\times M$ to define the exponential map, and use the term \emph{asymptotically trivial
coordinates} to refer to a diffeomorphism $(\sigma,\tau) : Z^\pm \to Z^\pm$
such that $\sigma(s,t) - s$ and $\tau(s,t) - t$ approach constants
as $|s| \to \infty$ and their derivatives of all orders decay to zero.

\begin{Thm*}[\cite{HWZ:props1,HWZ:props4,Mora}]  
Suppose $u:Z^\pm\to \R\times M$ has finite energy and is asymptotic to a Morse-Bott Reeb 
orbit $\gamma$ of period $T>0$.  Then there exist asymptotically
trivial coordinates $(\sigma,\tau)$ such that for sufficiently large $|\sigma|$, either
$u(\sigma,\tau) = (T\sigma,\gamma(T\tau))$ or
\begin{equation}\label{eqn:asymptoticformula}
                u(\sigma,\tau)=\exp_{(T\sigma,\gamma(T\tau))}
\left[e^{\mu \sigma}(e_\mu(\tau) + r(\sigma,\tau))\right],
\end{equation}
where $e_\mu$ is an eigenfunction of $\mathbf{A}_\gamma$ with eigenvalue 
$\mu \in \sigma(\mathbf{A}_\gamma)$ such that $\pm\mu < 0$, and the
``remainder'' term
$r(\sigma,\tau) \in \xi_{\gamma(T\sigma)}$ decays to zero uniformly
with all derivatives as $|\sigma| \to \infty$.
\end{Thm*}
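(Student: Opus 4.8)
This is a standard result in the analytic foundations of punctured holomorphic curve theory; I will describe the strategy, following Hofer--Wysocki--Zehnder \cite{HWZ:props1,HWZ:props4} in the nondegenerate case and Mora \cite{Mora} for the Morse-Bott case. The plan is to reduce the statement to an exponential decay estimate for the ``transverse component'' of $u$ near the puncture, and then extract the leading eigenfunction by a spectral argument on the loop space.

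First I would pass to a half-cylinder: assume without loss of generality that $z$ is a positive puncture, so that $u$ restricts to a finite energy map $Z^+ = [0,\infty)\times S^1 \to \R\times M$ which, by Hofer's theorem together with uniform gradient bounds obtained from a bubbling-off argument using finiteness of $E(u)$, converges along subsequences $s_k\to\infty$ to $\gamma(T\cdot)$. Using the Morse-Bott normal form for $\lambda$ near the Morse-Bott submanifold $N\supset\gamma$, I would fix a tubular-neighbourhood identification of a neighbourhood of the image of the trivial cylinder over $\gamma$ in $\R\times M$ with $\R\times S^1\times\R^2$, chosen so that the trivial cylinder corresponds to $(s,t)\mapsto(Ts,t,0)$, a unitary trivialization $\Phi$ of $\x^*\xi$ is built in, and $\mathbf{A}_\gamma$ takes the standard form $\eta\mapsto -J_0\dot\eta - S(t)\eta$ of \eqref{eqn:asympTriv}. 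In these coordinates write $u=(a,\vartheta,\zeta)$ and regard $\zeta$ as a path $s\mapsto\zeta(s):=\zeta(s,\cdot)$ in a Sobolev completion of $C^\infty(S^1,\R^2)$; after exploiting the removable-singularity theorem and the known $C^\infty(S^1)$-subconvergence to pin down the $\R$- and $S^1$-components, the nonlinear Cauchy--Riemann equation reduces to an equation of the form $\partial_s\zeta + \mathbf{A}_\gamma\zeta = \mathfrak{r}(s)$ with $\|\mathfrak{r}(s)\| \le \varepsilon(s)\,\|\zeta(s)\|$ and $\varepsilon(s)\to 0$, the error being quadratically small in the $C^1$-size of the transverse part.

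The analytic core is then the exponential decay estimate $\|\zeta(s)\|_{C^k(S^1)} \le C_k e^{-\delta s}$ for a $\delta>0$ determined by the spectral gap of $\mathbf{A}_\gamma$ about $0$. I would prove this via the differential inequality for $g(s):=\tfrac12\|\zeta(s)\|_{L^2}^2$: using self-adjointness of $\mathbf{A}_\gamma$, the fact that the relevant part of its spectrum stays away from $0$, and the smallness of $\mathfrak{r}$, one obtains $g''(s)\ge\delta^2 g(s)$ for large $s$, and since $g(s)\to 0$ a convexity/maximum-principle argument forces exponential decay of $g$; elliptic bootstrapping then upgrades this to all $C^k$-norms. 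With decay in hand, the leading-order behaviour is extracted by decomposing $\zeta(s)$ in an $L^2$-eigenbasis of $\mathbf{A}_\gamma$: the coefficients satisfy ODEs that are exponential to leading order, so either $\zeta$ decays faster than every exponential---in which case a Carleman-type unique continuation estimate forces $\zeta\equiv 0$ for large $s$ and $u$ coincides with the trivial cylinder---or the dominant surviving term corresponds to the eigenvalue $\mu\in\sigma(\mathbf{A}_\gamma)$ with $\mu<0$ closest to $0$ among those with nonvanishing coefficient, and $e^{-\mu s}\zeta(s)\to e_\mu$ in $C^\infty(S^1)$ for a corresponding eigenfunction. Choosing $(\sigma,\tau)$ to be asymptotically trivial coordinates that absorb the residual $s$- and $t$-shifts, so that the exponential map in \eqref{eqn:asymptoticformula} is correctly centred, and reading this convergence back through the tubular-neighbourhood chart yields the stated formula with $r(\sigma,\tau)\to 0$ uniformly with all derivatives.

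The step I expect to be the main obstacle is the refinement in the genuinely Morse-Bott (as opposed to nondegenerate) case, where $\mathbf{A}_\gamma$ has nontrivial kernel of dimension $\dim N - 1$. Here one must first upgrade Hofer's subconvergence to honest convergence of $u(s,\cdot)$ to a \emph{single} orbit in $N$, and then control the component of $\zeta(s)$ along the $0$-eigenspace, which a priori could tend to a nonzero constant corresponding to a drift of $u$ within the Morse-Bott family. The resolution is to show this component does converge and then, after re-choosing the limiting orbit inside $N$ and correspondingly adjusting the tubular neighbourhood and the coordinates $(\sigma,\tau)$, to arrange that its limit is $0$, so that the decay rate is genuinely governed by the nonzero eigenvalues $\mu$ with $\mu<0$; this is the content of Mora's thesis \cite{Mora}, and it is precisely where the flexibility in the ``asymptotically trivial coordinates'' appearing in the statement is used.
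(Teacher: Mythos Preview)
The paper does not prove this theorem: it is quoted as a known result from the cited references \cite{HWZ:props1,HWZ:props4,Mora}, with no proof or proof sketch given in the paper itself. Your proposal is a reasonable outline of the standard argument found in those references---the reduction to the flow equation $\partial_s\zeta + \mathbf{A}_\gamma\zeta \approx 0$, the convexity estimate $g'' \ge \delta^2 g$ for the $L^2$-norm, and the spectral decomposition to isolate the leading eigenfunction are exactly the ingredients of the Hofer--Wysocki--Zehnder approach, and you correctly identify the Morse-Bott kernel as the place where Mora's additional work enters. There is nothing to compare against in the paper.
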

\begin{Def}
When \eqref{eqn:asymptoticformula} holds, we call $e_\mu$ the asymptotic
eigenfunction of $u$ at the puncture, and say that $u$ has
transversal convergence rate~$|\mu|$.
In the case where
$u(\sigma,\tau) = (T\sigma,\gamma(T\tau))$, we define the asymptotic 
eigenfunction to be~$0$ and the transversal convergence rate to be~$\infty$.
\end{Def}
Observe that the asymptotic eigenfunction $e_\mu$ is
determined uniquely once a parametrization of $\gamma$ is fixed.
We know also from the monotonicity of winding numbers that 
$\wind^\Phi(e_\mu)\leq\alpha_-^\Phi(\gamma)$ if the puncture is positive,
and $\wind^\Phi(e_\mu) \ge \alpha_+^\Phi(\gamma)$ if it is negative.

Let $\pi_\lambda : TM \to \xi$ denote the natural projection
with respect to the splitting $TM = \R X_\lambda \oplus \xi$ and suppose
$u = (u_\R,u_M) : \dot{\Sigma} \to \R\times M$ is a finite energy
$J$--holomorphic curve.  Then
the composition $\pi_\lambda \circ Tu_M$ defines a section of the bundle of
complex linear homomorphisms $(T\dot{\Sigma},j) \to (u^*\xi,J)$.  As shown
in \cite{HWZ:props2}, this section satisfies a linear Cauchy-Riemann type
equation, and thus is either trivial or has a discrete set of zeros, all of 
positive order.  The former holds if and only if any asymptotic eigenfunction
of $u$ vanishes, in which case they all do: then $\int_{\Sigmadot} u^*d\lambda
= 0$ and $u$ is a branched cover of a trivial cylinder.  Otherwise,
\eqref{eqn:asymptoticformula} implies that $\pi_\lambda \circ Tu_M$ has
finitely many zeros, and we denote the algebraic count of these by
$$
\windpi(u) \in \Z.
$$
Clearly $\windpi(u) \ge 0$, with equality if and only if
$u_M : \Sigmadot \to M$ is an immersion.

\subsection{Property~$(\star)$ and the main results}
\label{subsec:star}

We now use holomorphic curves to define two technical conditions on contact 
manifolds which imply the results stated in \S\ref{sec:introduction}. Property~$(\star)$ and its weak version, introduced below, will serve as 
obstructions to the
existence of non-separating contact embeddings.  They are
implied by each of
the contact topological assumptions mentioned in Theorem~\ref{thm:consequence},
and in fact are more general (see also \cite{Wendl:fiberSums}).

\begin{Def}
\label{def:star}
A closed three-dimensional contact manifold $(M,\xi)$ satisfies 
property $(\star)$ if there exists a contact form $\lambda$ with
$\ker\lambda = \xi$ and an admissible $\R$--invariant almost complex
structure $J$ on the symplectization $\R\times M$, which admits
a finite energy $J$--holomorphic punctured sphere
$$
u = (u_\R,u_M) : \dot{\Sigma} = S^2 \setminus \{z_1,\ldots,z_N\} \to 
\R \times M
$$ 
with the following properties:
\begin{enumerate}
\item $u_M$ is an embedding, and the closure of
$u_M(\dot{\Sigma}) \subset M$ is an embedded surface whose oriented boundary
is a union of Reeb orbits, called the ``asymptotic orbits'' of~$u$.
\item {Each asymptotic orbit of $u$ is nondegenerate or Morse-Bott.}
\item If $T_1,\ldots,T_N$ are the periods of the asymptotic orbits of $u$,
then every Reeb orbit not in the same Morse-Bott submanifold with one of these
has period strictly greater than $T_1 + \ldots + T_N$.
\item $u$ has no asymptotic orbit that is nondegenerate with
Conley-Zehnder index zero, relative to the natural trivialization
determined by the image of $u_M$ near the puncture.
\item
{If any asymptotic orbit of $u$ belongs to a $2$--dimensional Morse-Bott
manifold $N \subset M$ disjoint from $u_M(\dot{\Sigma})$, then $N$ is a torus
and contains no other asymptotic orbits of~$u$.}
\end{enumerate}
\end{Def}

\begin{Rmks*} \ 
\begin{itemize}
\item
The fact that Reeb orbits comprise the \emph{oriented} boundary of
$\overline{u_M(\dot{\Sigma})}$ implies that every puncture 
of $u$ is positive.  Moreover, each puncture is asymptotic to a
distinct Reeb orbit, which is simply covered.
\item 
The asymptotic formula \eqref{eqn:asymptoticformula} implies that on
each cylindrical end of $\dot{\Sigma}$, $u_M$ does not intersect the
corresponding asymptotic orbit, thus it defines a natural trivialization
of $\xi$ along this orbit.  One can then show
(cf.~\eqref{eqn:CZwinding}) that relative to this trivialization, the
orbit always has nonnegative Conley-Zehnder index if it is nondegenerate---thus
our definition requires this index to be anything
strictly larger than the minimum possible value.
\end{itemize}
\end{Rmks*}

\begin{Def}\label{def:weak_star}
We say that a closed three-dimensional contact manifold $(M,\xi)$ 
satisfies weak property $(\star)$ if there is a symplectic cobordism 
{$(W,\omega)$} from $(M,\xi)$
to a contact manifold $(M',\xi')$, {such that either $(W,\omega)$ contains
a symplectically embedded sphere of nonnegative self-intersection number or
$(M',\xi')$ satisfies property $(\star)$.}
\end{Def}

For example, $(M,\xi)$ satisfies weak property~$(\star)$ if it
{admits a symplectic cap containing a nonnegative symplectic sphere,
or if it} can be made
to satisfy property~$(\star)$ after a sequence of contact $(-1)$--surgeries 
or connected sum operations.
Obviously property~$(\star)$ implies weak property~$(\star)$, and it's
plausible that the converse may also be true, though this is presumably
hard to prove.

We can now state some more technical results that imply 
Theorem~\ref{thm:consequence}.  These will be proved in 
\S\ref{sec:proofs}, using the machinery of \S\ref{sec:compactness}.

\begin{Thm}\label{thm:separating}
Let $(W,\om)$ be a closed and connected
symplectic 4-manifold which contains a closed 
contact type hypersurface $M\subset W$ satisfying weak property $(\star)$. 
Then $M$ separates~$W$.
\end{Thm}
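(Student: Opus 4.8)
The plan is to argue by contradiction: suppose $M \subset W$ is a non-separating contact type hypersurface satisfying weak property $(\star)$, and derive a contradiction by constructing a non-compact symplectic manifold that is simultaneously forced to be compact by a foliation argument. First I would reduce to the case of property $(\star)$ itself (not merely the weak version): if $M$ only satisfies weak property $(\star)$ via a symplectic cobordism to some $(M',\xi')$ satisfying $(\star)$, or to a cobordism containing a nonnegative symplectic sphere, I would glue this cobordism onto the convex side of $M$ inside $W$; since $M$ is non-separating, cutting $W$ along $M$ produces a connected cobordism $(V_1,\omega)$ from a concave copy of $M$ to a convex copy of $M$, and the auxiliary cobordism can be spliced in without destroying the non-separating character of the relevant hypersurface. (In the sphere case one instead works directly with Theorem~\ref{thm:rationalRuled}-type reasoning, but the mechanism is parallel.) So I may assume $M$ itself carries the contact form $\lambda$, almost complex structure $J$, and the finite-energy punctured holomorphic sphere $u$ of Definition~\ref{def:star}.

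Next I would perform the \emph{infinite chain construction}: cut $W$ open along $M$ to get $(V_1,\omega)$, a symplectic cobordism from $(M,\xi)$ (concave end) to $(M,\xi)$ (convex end), and then form $\V := V_1 \cup V_1 \cup \cdots$ by gluing the convex end of each copy to the concave end of the next, attaching cylindrical symplectization collars as needed so that the gluings are smooth. The result is a non-compact symplectic $4$-manifold with a single convex boundary component $M$ and a cylindrical positive end modeled on $\R_+ \times M$ at infinity; crucially it is geometrically bounded, so Gromov-compactness arguments apply. Inside the positive cylindrical end sits the symplectization $\R \times M$, and I would use Lemma~\ref{lemma:nondegeneracy} together with the Morse-Bott asymptotic analysis to extend the holomorphic sphere $u$ from $\R \times M$ into $\V$ (or rather, view $u$ as living in the end and consider its moduli space). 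The point of conditions (1)--(5) in Definition~\ref{def:star} is precisely to guarantee that $u$ is Fredholm regular, has index~$2$, intersects its neighbors and the hypersurface $M$ positively and transversally, and that no bubbling or breaking can occur in its moduli space — this is the content of \S\ref{sec:compactness}, which I would invoke.

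The heart of the argument is then: the connected component $\mathcal{M}$ of the moduli space of such embedded holomorphic spheres in $\V$ containing $u$ is a smooth compact $2$-manifold (possibly with boundary coming from nodal/broken configurations that are themselves controlled), and the curves in $\mathcal{M}$ foliate an open and closed subset of $\V$ — openness from the implicit function theorem and positivity of intersections, closedness from the compactness results of \S\ref{sec:compactness}. Since $\V$ is connected, the foliation fills all of $\V$. But a foliation of $\V$ by a \emph{compact} $2$-parameter family of \emph{closed} surfaces (each holomorphic sphere is compact) forces $\V$ itself to be compact — roughly, $\V$ is the total space of a fibration-like object over a compact base with compact fibers, or one extracts a proper map to a compact parameter space. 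This contradicts the fact that $\V$ is non-compact (it contains the infinite chain). Hence no such $M$ exists, proving Theorem~\ref{thm:separating}.

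I expect the main obstacle to be the compactness and regularity bookkeeping for the moduli space $\mathcal{M}$ in the non-compact target $\V$ — in particular ensuring that finite-energy holomorphic curves in $\V$ cannot escape to infinity along the cylindrical end in uncontrolled ways, and that any degenerations (nodal curves, broken buildings) are forced by the period condition (3) and the index/winding constraints (4)--(5) to remain within a well-understood class that still foliates. This is exactly where one must be "a little more careful in the noncompact setting," as remarked in the introduction, and it is presumably the role of \S\ref{sec:compactness} (cf.~Prop.~\ref{prop:uniform_compactness}); I would lean on those results rather than reprove them. A secondary technical point is verifying that the infinite chain $\V$ genuinely carries a geometrically bounded almost complex structure extending $J$ on the end, which requires matching choices across the infinitely many glued copies of $V_1$ — routine but needing care.
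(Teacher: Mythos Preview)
Your proposal is correct and follows essentially the same route as the paper: the infinite chain construction, the moduli space of curves from property~$(\star)$ (or the nonnegative sphere), and the foliation/compactness contradiction via Corollary~\ref{cor:foliation} and Proposition~\ref{prop:uniform_compactness}. The only notable imprecisions are that the punctured spheres from property~$(\star)$ are not closed surfaces (their images escape into the cylindrical end $E^+$, though the foliation argument still works), and that the mechanism confining curves to a fixed $W_{N_0}$ is specifically the \emph{periodic} choice of $J$ combined with the monotonicity lemma---your phrase ``matching choices across the infinitely many glued copies'' is exactly this, and it is what makes the energy-versus-depth estimate in Proposition~\ref{prop:uniform_compactness} go through.
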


\begin{Thm}\label{thm:multiple}
Let $(W,\om)$ be a compact and connected symplectic 4-manifold with convex
boundary containing a connected component $M \subset \p W$ that satisfies 
weak property $(\star)$.  Then $\p W$ is connected.
\end{Thm}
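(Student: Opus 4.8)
The plan is to argue by contradiction and to reduce the statement to Theorem~\ref{thm:separating} via the construction of Example~\ref{ex:EtnyresExample}, so that no new holomorphic curve analysis is needed. Suppose $(W,\om)$ is compact and connected with convex boundary, $M \subset \p W$ is a connected component satisfying weak property~$(\star)$, and --- contrary to what we want --- $\p W$ is disconnected, so that there is a second boundary component $M' \subset \p W$ with $M' \ne M$. From this data I would build a \emph{closed} connected symplectic $4$--manifold $(W',\om')$ containing a \emph{non-separating} contact type hypersurface inducing the contact structure $\xi$ of $M$; since $\xi$ satisfies weak property~$(\star)$, this contradicts Theorem~\ref{thm:separating} and forces $\p W$ to be connected.

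The construction proceeds in three steps. First, attach a symplectic (Weinstein) $1$--handle to $W$ along a pair of Darboux balls, one in $M$ and one in $M'$; this is a standard operation on convex boundaries (see e.g.~\cite{Geiges:book}) and yields a compact symplectic manifold $(W_1,\om_1)$, still with convex boundary, in which the two components $M$ and $M'$ have been fused into a single connected-sum component. Second, cap off every component of $\p W_1$ by a concave filling, as provided by Etnyre--Honda \cite{EtnyreHonda:cobordisms}, producing a closed connected symplectic manifold $(W',\om')$. Third, let $\Sigma \subset W'$ be the copy of $M$ obtained by pushing the original boundary component $M$ slightly inward along its Liouville collar; because that collar is left untouched by the handle attachment, $\Sigma$ remains transverse to a Liouville vector field and is therefore a closed contact type hypersurface inducing the contact structure $\xi$ up to isotopy.

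It remains to verify that $\Sigma$ does not separate $W'$ --- this is exactly the assertion of Example~\ref{ex:EtnyresExample}, and it is the step I would treat most carefully. One side of $\Sigma$ consists only of the thin slab of the Liouville collar between $\Sigma$ and the place where the original component $M$ used to be; but the outer end of that slab is precisely where the $1$--handle is attached, so one can run a loop starting from that side of $\Sigma$, through the handle, along the collar of $M'$, through the (connected) bulk of $W$, and back to the other side of $\Sigma$ without meeting $\Sigma$ again. Hence $W' \setminus \Sigma$ is connected, i.e.~$\Sigma$ is non-separating. Applying Theorem~\ref{thm:separating} to the closed connected symplectic manifold $(W',\om')$ and the contact type hypersurface $\Sigma$ (whose contact structure $\xi$ satisfies weak property~$(\star)$) now yields that $\Sigma$ must separate $W'$, a contradiction. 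The only points I expect to need genuine attention are this non-separation argument and the routine verification that the Weinstein $1$--handle attachment and the Etnyre--Honda capping preserve convexity of all the relevant boundary pieces; the entire holomorphic-curve input is outsourced to Theorem~\ref{thm:separating}.
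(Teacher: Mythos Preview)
Your argument is correct and is exactly the approach the paper takes: it states that Theorem~\ref{thm:multiple} ``follows immediately from Theorem~\ref{thm:separating} and Example~\ref{ex:EtnyresExample}, since a symplectic semifilling with disconnected boundary can always be turned into a closed symplectic $4$--manifold containing non-separating contact hypersurfaces.'' The paper also sketches a slightly more direct alternative (fill $W^\infty$ with the holomorphic curves from property~$(\star)$ and observe that some curve must then touch another convex boundary component tangentially, violating convexity), but your reduction is the primary argument given.
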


\begin{Thm} \label{thm:separating_general}
Let $(W,\om)$ be a compact and connected 4-manifold with convex boundary 
$(M,\xi)$ satisfying weak property $(\star)$. Then any closed contact 
type hypersurface $H$ in $W\setminus M$ separates $W$ into a convex filling
of $H$ and a symplectic cobordism from $H$ to $M$.  In particular,
$H$ also satisfies the weak $(\star)$ property.
\end{Thm}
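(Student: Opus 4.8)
The plan is to run an orientation analysis of the two sides of $H$, reducing the separating case to Theorem~\ref{thm:multiple} and killing the non-separating case with the infinite-chain construction already used to prove Theorems~\ref{thm:separating} and~\ref{thm:multiple}. Fix a Liouville vector field $Y$ near $H$; it is transverse to $H$, so a collar of $H$ in $W$ has the form $(-\epsilon,\epsilon)\times H$ with $Y=\partial_t$ and $H=\{t=0\}$. For concreteness assume $H$ is connected (the general case follows by treating components one at a time) and $\partial W=M$. Let $W_1\subset W$ be the closed region bounded by $H$ on the $\{t<0\}$ side and $W_2$ the one on the $\{t>0\}$ side; along $H$, $Y$ points out of $W_1$ and into $W_2$, so $H$ is a convex boundary component of $W_1$ and a concave one of $W_2$. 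Since $M$ is connected and disjoint from $H$, there are exactly three possibilities: (a)~$W_1\cap W_2=H$ and $M\subset W_2$; (b)~$W_1\cap W_2=H$ and $M\subset W_1$; (c)~$W_1=W_2$, i.e.\ $H$ does not separate. In case~(a), $\partial W_1=H$ is purely convex, so $W_1$ is a convex filling of $H$, while $\partial W_2=(-H)\sqcup M$ with $H$ concave and $M$ convex, so $W_2$ is a symplectic cobordism from $H$ to $M$; this is exactly the conclusion, so it remains to exclude (b) and (c).

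For~(b): here $\partial W_1=H\sqcup M$ with both components convex ($H$ from the collar, $M$ because it was convex in $W$), and the component $M$ satisfies weak property~$(\star)$. Theorem~\ref{thm:multiple} then forces $\partial W_1$ to be connected, contradicting $H\cap M=\emptyset$ with $H\neq\emptyset$. For~(c): cut $W$ open along $H$ to obtain a compact connected symplectic cobordism $(V_1,\omega)$ with convex boundary $M\sqcup H^+$ and concave boundary a copy $H^-$ of $(H,\xi)$. As in the proof of Theorems~\ref{thm:separating} and~\ref{thm:multiple} (cf.\ Figure~\ref{fig:infinitechain}), build the noncompact manifold $(\V,\omega)$ by gluing an infinite chain of copies of $(V_1,\omega)$, identifying the concave $H^-$ of each copy with the convex $H^+$ of the next. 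Then $(\V,\omega)$ is connected, noncompact, geometrically bounded, and has a convex boundary component that is a copy of $M$, hence satisfies weak property~$(\star)$. The compactness and foliation arguments of \S\ref{sec:compactness} now apply exactly as in the proof of Theorem~\ref{thm:multiple} (with the usual extra care in the noncompact setting, cf.\ Proposition~\ref{prop:uniform_compactness}): the embedded finite-energy curve supplied by weak property~$(\star)$ sits in a smooth, compact $2$--dimensional moduli space of embedded curves foliating $(\V,\omega)$, which would make $\V$ compact --- a contradiction. Hence $H$ separates $W$, and by the previous paragraph the splitting has the asserted form.

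Finally, the ``in particular'' clause is just a composition of cobordisms: weak property~$(\star)$ for $M$ gives a symplectic cobordism $X$ from $(M,\xi)$ to some $(M',\xi')$ such that either $X$ contains a symplectically embedded sphere of nonnegative self-intersection number or $(M',\xi')$ satisfies property~$(\star)$; stacking $X$ onto the cobordism $W_2$ from $H$ to $M$ produces a symplectic cobordism from $H$ to $M'$ with the same alternative (the sphere lies away from the gluing collar), so $H$ satisfies weak property~$(\star)$ by Definition~\ref{def:weak_star}. The step I expect to be the real obstacle is case~(c): it is not enough to invoke Theorem~\ref{thm:multiple}, since cutting along $H$ leaves behind a concave boundary component, so one has to genuinely re-execute the infinite-chain construction together with the uniform Gromov--SFT compactness and automatic-transversality input of \S\ref{sec:compactness} in the noncompact (but geometrically bounded) manifold $\V$; everything else is bookkeeping of boundary orientations.
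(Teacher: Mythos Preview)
Your argument is correct and follows essentially the same route as the paper for the non-separating case~(c): cut along $H$, glue an infinite chain of copies along the $H^\pm$ boundaries, and use the moduli space coming from property~$(\star)$ on (a copy of) $M$ together with the machinery of \S\ref{sec:compactness} to derive compactness of $\V$, a contradiction. The paper's proof is terser and only treats case~(c) explicitly; you are more thorough in that you separately dispose of case~(b) via Theorem~\ref{thm:multiple} and spell out the ``in particular'' clause by composing cobordisms, both of which the paper leaves implicit.
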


\begin{Rmk}
A compact connected symplectic manifold with convex boundary can never
contain a symplectic sphere of nonnegative self-intersection.  This follows
easily from the arguments we will use to prove the above results: otherwise
one would find a family of embedded holomorphic spheres foliating the
positive end of the symplectization of the convex boundary,
and thus violating the maximum principle.
\end{Rmk}

\begin{Rmk}
Note that property~$(\star)$ depends only on the contact structure:
we do \textbf{not} assume in any of these theorems that 
the contact form induced on $M$ by a Liouville vector field is the same one 
which appears in Definition~\ref{def:star}.
\end{Rmk}

We will show in \S\ref{sec:torsion} that any contact manifold
$(M,\xi)$ with Giroux torsion satisfies Property~$(\star)$.  It turns out
that this is also true for a contact fiber sum of open books
$(M,\xi) = \#_\Phi (M_i,\xi_i)$ whenever any of the summands $(M_i,\xi_i)$
is planar.  This follows from
an important relationship between open books and holomorphic curves:
namely, it is shown in \cites{Abbas:openbook,Wendl:openbook} that if the open 
book on $(M_i,\xi_i)$ is planar, one can take its pages to be projected images
of embedded index~$2$ holomorphic curves.  A minor variation on this 
construction in \cite{Wendl:openbook2} extends it to the 
blown up manifold $(\widehat{M}_i,\hat{\xi}_i)$: 
the difference here is that each holomorphic page is asymptotic to a 
different orbit in a Morse-Bott family foliating the boundary.  
Moreover, one can easily arrange the contact form in this construction
so that all the asymptotic orbits are either elliptic or Morse-Bott and
have much smaller period than any other Reeb orbit in
$\#_\Phi (M_i,\xi_i)$.  It follows that $\#_\Phi (M_i,\xi_i)$
satisfies property~$(\star)$ if any of its constituent open books is planar.

\section{Giroux torsion}\label{sec:torsion}

Following a construction in (\cite{Wendl:fillable}) but being more careful 
about periods, we now establish the following.
\begin{Prop}\label{prop:Torsion implies prop star}
Let $(M,\xi)$ be a closed contact manifold having Giroux torsion.  Then 
$(M,\xi)$ satisfies property $(\star)$.  
\end{Prop}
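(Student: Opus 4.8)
The plan is to produce the data $(\lambda,J,u)$ of Definition~\ref{def:star} inside an embedded Giroux torsion domain, following \cite{Wendl:fillable} for properties~(1), (2), (4), (5) and treating the period bound~(3) --- the new point here --- separately. Let $\mathcal{G}\cong T^2\times[0,1]$ be an embedded Giroux torsion domain in $(M,\xi)$, with coordinates $(x,y,t)$ and $(x,y)\in T^2=(\R/\Z)^2$. Inside $\mathcal{G}$ there are two disjoint pre-Lagrangian tori $T_\pm=T^2\times\{t_\pm\}$, with $t_-<t_+$, on which the Reeb direction is a common primitive rational class and between which the contact planes rotate by~$\pi$; after an $SL(2,\Z)$ change of coordinates on $T^2$ we may take that direction to be $\partial_x$. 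The first step is to fix a contact form $\lambda$ for $\xi$ which agrees with a prescribed form outside $\mathcal{G}$ and, on a neighborhood of $T^2\times[t_-,t_+]$, has the normal form
\[
  \lambda=\rho(t)\bigl(\cos\theta(t)\,dx-\sin\theta(t)\,dy\bigr),
\]
with $\theta$ smooth and strictly increasing, $\theta(t_-)=0$, $\theta(t_+)=\pi$, and $\rho>0$ satisfying $\rho'(t_\pm)=0$; the function $\rho$ is pinned down only in the final step. The contact condition reduces to $\theta'>0$, and one computes that the Reeb field is tangent to $\partial_x$ exactly along $T_\pm$, so each $T_\pm$ is a Morse-Bott torus foliated by the simply covered closed orbits $(\R/\Z)\times\{y_0\}\times\{t_\pm\}$, of minimal period a fixed positive multiple of $\rho(t_\pm)$.

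With the normal form in place, I would invoke the construction of \cite{Wendl:fillable}: there is an admissible $\R$-invariant almost complex structure on $\R\times(T^2\times[t_-,t_+])$, extended arbitrarily to an admissible $J$ on $\R\times M$, and a finite energy $J$-holomorphic cylinder $u=(u_\R,u_M)$ with two positive punctures, one asymptotic to an orbit $\gamma_-\subset T_-$ and the other to the ``parallel'' orbit $\gamma_+\subset T_+$, such that $u_M$ is an embedding onto the open annulus between $T_-$ and $T_+$. (One obtains a whole family of such cylinders, but a single one is all that is needed here.) For this $u$: property~(2) holds since $\gamma_\pm$ lie on the Morse-Bott tori $T_\pm$; property~(4) is vacuous since $\gamma_\pm$ are degenerate; property~(5) holds since $T_\pm$ are tori disjoint from $u_M(\dot{\Sigma})$, each carrying exactly one asymptotic orbit; and for property~(1), $u_M$ is an embedding whose image has closure the embedded closed annulus $(\R/\Z)\times\{y_0\}\times[t_-,t_+]$, with oriented boundary $\gamma_+\cup\gamma_-$. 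The point I would verify with care is that, precisely because $\theta$ increases by~$\pi$ (that is, because $\mathcal{G}$ really is a Giroux torsion domain and not a collar rotating the opposite way), the Reeb field points along $+\partial_x$ on one of $T_\pm$ and along $-\partial_x$ on the other, which is exactly what makes the boundary orientations of the annulus match the Reeb orientations of $\gamma_\pm$; both punctures being positive is likewise part of the construction and is consistent with Stokes' theorem and with the remark after Definition~\ref{def:star}.

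It remains to arrange property~(3), which is the delicate point and the reason for ``being more careful about periods'': with $\tau_\pm$ the minimal period of $\gamma_\pm$, I must ensure that every Reeb orbit not lying on $T_-\cup T_+$ has period strictly greater than $\tau_-+\tau_+$. The plan is to take $\rho$ equal to a large constant $\rho_0$ outside small neighborhoods $U_\pm\ni t_\pm$, and inside $U_\pm$ to let $\rho$ descend to a tiny value $\epsilon$ at $t_\pm$ with a critical point there, so that the normal form near $T_\pm$, and with it everything in the previous step, is undisturbed; then $\tau_-+\tau_+\le C\epsilon$ for a fixed constant $C$. A closed Reeb orbit on $T^2\times\{t\}$ with $t\in(t_-,t_+)$ lies on a slice whose Reeb direction is a primitive $(p,q)\in\Z^2$, and one bounds its period below by $c\sqrt{p^2+q^2}\,\rho(t)$ for a fixed $c>0$. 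Since the Reeb direction genuinely turns, the only small-norm rational direction occurring inside $U_\pm$ is the direction $\partial_x$ of $T_\pm$ itself, which is exempt from~(3); so every other closed orbit with $t\in(t_-,t_+)$ has either $\sqrt{p^2+q^2}$ large or $\rho(t)=\rho_0$, hence period bounded below independently of $\epsilon$. Orbits meeting $M\setminus(T_-\cup T_+)$ elsewhere --- in the two collars of $\mathcal{G}$, or in $M\setminus\mathcal{G}$ where $\lambda$ was prescribed in advance --- also have period bounded below by a constant independent of $\epsilon$. Taking $\epsilon$ small enough then gives~(3), and hence property~$(\star)$.

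I expect this last step to be the main obstacle: one has to reconcile making $\gamma_\pm$ as short as possible with keeping long the infinitely many closed orbits sitting on rational-slope slices inside the twist region, without disturbing the part of the normal form that produces the holomorphic cylinders. The orientation bookkeeping in~(1) and the precise sign of the Giroux twist are the remaining points to check with care.
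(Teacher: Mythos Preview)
Your overall strategy matches the paper's: produce a Morse--Bott contact form on the torsion domain so that two special tori $T_\pm$ carry orbits of very small period while every other closed orbit has period bounded below independently of that small number, then cite \cite{Wendl:fillable} for the holomorphic cylinders. The problem is your argument for the period bound.

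You assert that the minimal period on a slice $T^2\times\{t\}$ is bounded below by $c\sqrt{p^2+q^2}\,\rho(t)$, and that inside $U_\pm$ the only small-norm rational Reeb direction is $\partial_x$. Both claims fail, and for the same reason: the Reeb direction on a slice is not determined by $\theta(t)$ alone but depends on the ratio $\rho'(t)/(\rho(t)\theta'(t))$. A direct computation from your normal form gives the primitive period as
\[
\T=\sqrt{p^2+q^2}\cdot\frac{\rho^2\theta'}{\sqrt{\rho'^2+\rho^2\theta'^2}},
\]
so your lower bound $c\sqrt{p^2+q^2}\,\rho$ holds only when $|\rho'|/(\rho\theta')$ is uniformly bounded---precisely what your construction violates by forcing $\rho$ to jump from $\epsilon$ to $\rho_0$ over a short interval. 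Concretely, as $t$ leaves $t_-$ and $\rho'$ becomes large positive, the Reeb direction swings from $\partial_x$ toward $\partial_y$, hitting every primitive $(1,n)$ along the way; at the slice where the direction is $(1,1)$ one has $\rho'=\rho\theta'$, hence $\rho$ is still close to $\epsilon$, and plugging into the formula above gives period $\approx\epsilon$, not bounded away from it.

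The paper repairs this by imposing a global shape constraint rather than a local profile for~$\rho$: writing $\lambda=f\,dx+g\,dy$ and viewing $\gamma=(f,g)$ as a closed plane curve, one requires $\gamma$ to bound a \emph{convex} region symmetric about both axes---an oblong oval with semi-minor axis $\epsilon$ (Lemma~\ref{lemma:ellipseisgoodenough}). Convexity forces the tangent direction, and hence the Reeb direction, to turn monotonically, so each rational direction is hit only at a controlled location. A two-case argument (according to whether $\gamma(\theta)$ lies inside or outside the diamond $|x|+|y|\le\tfrac12$) then shows $\T(\theta)>\tfrac14$ except at the two special slices where $\T=\epsilon$. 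Your profile---constant $\rho_0$ with sharp dips to $\epsilon$ at $t_\pm$---produces a curve with inward spikes, hence non-convex, and that is exactly where the argument breaks down.
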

\begin{proof}

By definition, Giroux torsion means that $(M,\xi)$
contains a subset $T$ that can be identified
with a thickened torus $S^1 \times S^1 \times [0,1]$, on which $\xi$
has the form
\begin{equation}\label{eqn:contactstructureontorus}
\xi = \ker\left[\cos(2\pi\theta)dx + \sin(2\pi\theta)dy\right]
\end{equation}  
in coordinates $(x,y,\theta) \in S^1 \times S^1 \times [0,1]$.
Let us assume $\xi = \ker\lambda$
for some contact form $\lambda$ that is Morse-Bott outside of $T$, and
in $T$ has the form
$\lambda = f(\theta)\ dx + g(\theta)\ dy$ for smooth functions
$f,g:[0,1] \to \R$ with
$$
\gamma(\theta) := (f(\theta),g(\theta)) = h(\theta) e^{2\pi i \theta} \in \R^2,
$$
where $h(\theta) > 0$ and $h(\theta) = 1$ for $\theta$ near $0$ and~$1$.
The path $\gamma$ is thus closed and 
bounds a star-shaped region in $\R^2$, and we will
show that $\lambda$ has the desired properties if $\gamma$ bounds a
suitably oblong oval.

The Reeb vector field of $\lambda$ on $T$ is given by 
\begin{equation}\label{equ:formulaforreebfield}
        X_\lambda=\frac{1}{D(\theta)}(g'(\theta)\partial_x - f'(\theta)\partial_y),
\end{equation}
where $D(\theta) := f(\theta) g'(\theta) - f'(\theta) g(\theta) > 0$.
Since this has no $\partial_\theta$ component, each torus 
$N(\theta_0):=\{(x,y,\theta_0)\ |\ (x,y)\in S^1\!\times\! S^1\}\subset T$
is invariant under the Reeb flow. Moreover, the Reeb flow on each
$N(\theta)$ is linear and has closed orbits if and only if
$dx(X_\lambda)/dy(X_\lambda) \in \Q \cup \{\infty\}$.  From 
\eqref{equ:formulaforreebfield}, this ratio is
$-g'(\theta) / f'(\theta) = -\slope(\gamma'(\theta))$, so
$N(\theta)$ has closed orbits precisely when $\slope(\gamma'(\theta))$ 
is rational or {infinite}.  In this case \emph{every} orbit in 
$N(\theta)$ is closed and represents the same class in $H_1(N(\theta)) = \Z^2$,
which we will denote by a pair of integers $(p(\theta),q(\theta))$ with  
$\gcd(|p(\theta)|,|q(\theta)|)=1$ and
\begin{equation}\label{equ:slopeequalsqoverp}
        \frac{p(\theta)}{q(\theta)}=-\slope(\gamma'(\theta)) \in
        \Q \cup \{\infty\}.
\end{equation}
Since $d\lambda$ vanishes on $N(\theta)$, all closed simply covered orbits in
$N(\theta)$ have the same period, which we will denote by $\T(\theta) > 0$.
If $\sigma:\R/\Z\to N(\theta)$ parametrizes such an orbit, we compute
\begin{equation}\label{equ:formulaforperiodsontori}
                \T(\theta)=\int_0^1\sigma^*\lambda=p(\theta)f(\theta) + q(\theta)g(\theta).  
\end{equation}

\begin{Lemma}\label{lemma:ellipseisgoodenough}
Fix $\epsilon>0$ small and assume that in addition to the above conditions,
$\gamma(\theta) = h(\theta) e^{2\pi i\theta}$ bounds a convex set symmetric
about both axes, $h(1/4) = h(3/4) = \epsilon$ and $\gamma'(\theta)$ and
$\gamma''(\theta)$ are always linearly independent.
Then:
\begin{enumerate}
 \item $\lambda$ is Morse-Bott.
 \item $X_\lambda=\frac{1}{\epsilon}\partial_y$ on $N(1/4)$ and 
 $-\frac{1}{\epsilon}\p_y$ on $N(3/4)$.
 \item $\T(1/4) = \T(3/4) = \epsilon$, and $\T(\theta) > 1/4$ for all other
 $\theta$ at which $N(\theta)$ has closed orbits.
\end{enumerate}
\end{Lemma}
\begin{proof}
It follows by straightforward computation from the assumption that 
$\gamma'(\theta)$ and $\gamma''(\theta)$
are linearly independent that each $N(\theta)$ with closed orbits
is a Morse-Bott submanifold.
The second claim follows immediately from \eqref{equ:formulaforreebfield}
since symmetry requires $g'(1/4) = g'(3/4) = 0$,
and it is then clear that $\T(1/4) = \T(3/4) = \epsilon$.

To show that all other values of $\theta$ have $\T(\theta) > 1/4$,
observe first that by symmetry, we can always assume
$g'$ and $-f'$ have the same sign as $f$ and $g$ respectively.  Thus
$\sign(p)=\sign(dx(X_\lambda))=\sign(g')=\sign(f)$ and 
$\sign(q)=\sign(dy(X_\lambda))=\sign(-f')=\sign(g)$, so
formula \eqref{equ:formulaforperiodsontori} becomes
\begin{equation}\label{equ:betterformulaforperiodsontori}
                \T(\theta)=|p(\theta)||f(\theta)| + |q(\theta)||g(\theta)|.  
\end{equation}
Let $\Delta$ denote the diamond shaped region in the $xy$--plane for which 
$|x|+|y|\leq 1/2$ (see Figure~\ref{fig:oval}).   We deal separately with two 
cases.  

\begin{figure}[hbt]
\psfrag{Delta}{$\Delta$}
\psfrag{half}{$\frac{1}{2}$}
\psfrag{1}{$1$}
\psfrag{-1}{$-1$}
\psfrag{epsilon}{$\epsilon$}
\psfrag{-epsilon}{$-\epsilon$}
\psfrag{gamma}{$\gamma$}
\includegraphics[scale=.45]{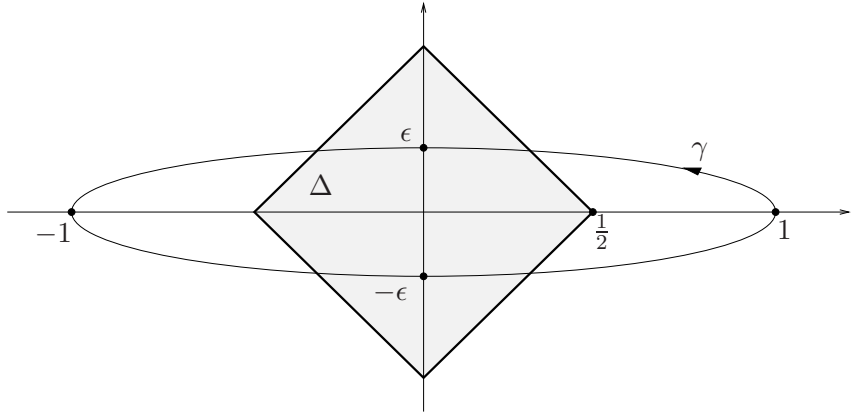}
\caption{\label{fig:oval} The curve $\gamma$ and (shaded) region $\Delta$
in Lemma~\ref{lemma:ellipseisgoodenough}.}
\end{figure}

\textbf{Case $\gamma(\theta)\in \Delta$:}  In this region, outside of the
special values $\theta=1/4,3/4$ we have
$0<|\slope(\gamma'(\theta))| < 2\epsilon$, and by convexity,
$|g(\theta)| > \epsilon / 2$.
With the slope nonzero, it follows from \eqref{equ:slopeequalsqoverp} that 
both $p$ and $q$ are nonzero: in particular $|p|\geq 1$.  Then
from the previous inequality, 
\begin{align*}
                |q|=\frac{|q|}{|p|}|p|=\frac{1}{|\slope(\gamma'(\theta))|}|p|>
                \frac{1}{2\epsilon} |p|\geq \frac{1}{2\epsilon},
\end{align*}
and using \eqref{equ:betterformulaforperiodsontori}, 
$\T(\theta)\geq |q(\theta)||g(\theta)| > \frac{1}{2\epsilon} 
\frac{\epsilon}{2} = 1/4.$

\textbf{Case $\gamma(\theta)\notin \Delta$:}  After verifying explicitly that
$\T(0) = \T(1) = 1$, we can exclude these two cases and assume once more that
both $p(\theta)$ and $q(\theta)$ are nonzero.
Then \eqref{equ:betterformulaforperiodsontori} gives 
\begin{align*}
        \T(\theta)\geq |f(\theta)| + |g(\theta)| > 1/2
\end{align*}
by the definition of~$\Delta$.
\end{proof}

Using the lemma, we can arrange $\lambda$ in $T$ without changing it in
$M \setminus T$ so that $\T(1/4) = \T(3/4) = \epsilon$ is less than half the 
period of every other periodic orbit in~$M$.  Now copying the construction 
in \cite{Wendl:fillable}*{Example~2.11}, we construct a family of 
embedded $J$--holomorphic cylinders in $\R\times T$ that foliate the
region between $N(1/4)$ and $N(3/4)$, each of the form
$$
u : \R \times S^1 \to \R \times M : (s,t) \mapsto 
(\alpha(s) + a_0, x_0, t, \rho(s)),
$$
where $a_0 \in \R$ and $x_0 \in S^1$ are arbitrary constants,
$\alpha : \R \to \R$ is a fixed function that goes to $+\infty$ at both ends and
$\rho : \R \to (1/4,3/4)$ is a fixed orientation reversing diffeomorphism.
Any of these cylinders satisfies the requirements of property~$(\star)$.
\end{proof}

\section{Fredholm theory, intersection numbers and compactness}
\label{sec:compactness}

In this section, assume $(W,\omega)$ is a connected (and possibly noncompact)
symplectic $4$--manifold with convex boundary 
$\p W = M$.  {The boundary need not be connected or nonempty; 
for simplicity we will assume that it is compact,
though we will later be able to relax this assumption.}
Choosing a Liouville vector field $Y$ and a smooth function
$f : M \to \R$, we define a contact form $\lambda$ on $M$ by
$\iota_Y\omega|_M = e^f \lambda$ and denote by $\xi = \ker\lambda$ the
induced contact structure.
We can then use the reverse flow of $Y$ to identify 
a neighborhood of $\p W$ symplectically with a neighborhood of the
boundary of $(\{ (t,m) \in \R \times M\ |\ t \le f(m) \}, d(e^t\lambda) )$.
Thus we can smoothly attach the cylindrical end
$$
E_+ := (\{ (t,m) \in \R \times M\ |\ t \ge f(m) \}
$$
with symplectic form $d(e^t\lambda)$,
forming an enlarged symplectic manifold $(W^\infty,\omega)$ which naturally 
contains $([T,\infty) \times M, d(e^t\lambda))$ for sufficiently large~$T$.

\begin{Assumption} \label{assumptions}
With $(W,\om)$ as described above, assume either of the following:
\begin{enumerate}
\item
$(W,\omega)$ contains a symplectically embedded sphere
$u_0 : S^2 \to W$ with self-intersection number zero.
\item
$(M,\xi)$ satisfies property~$(\star)$.
\end{enumerate}
\end{Assumption}
In the first case, we can define $\dot{\Sigma} := S^2$ with the standard
complex structure,
choose any admissible $\R$--invariant almost
complex structure $J_+$ on $([T,\infty) \times M, d(e^t\lambda))$ and
extend it to an $\omega$--compatible almost complex structure $J$ on
$W^\infty$ such that $u_0$ is (after reparametrization) a $J$--holomorphic
curve.  
In the second case, we can (by appropriate choice of the function
$f$) take $\lambda$ and $J_+$ to be the particular contact form and almost
complex structure arising from Definition~\ref{def:star}, and again
extend $J_+$ to an $\omega$--compatible structure $J$ on~$W^\infty$.
After a sufficiently large $\R$--translation, the $J_+$--holomorphic
curve given by Definition~\ref{def:star} may then be regarded as a
$J$--holomorphic curve
$$
u_0 = (u_\R,u_M) : \dot{\Sigma} \to [T,\infty) \times M \subset W^\infty,
$$
where $\dot{\Sigma} = S^2 \setminus \{z_1,\ldots,z_N\}$ with the
standard complex structure of~$S^2$.

Given any smooth function $\varphi : \R \to (0,\infty)$ that is
monotone increasing and satisfies $\varphi(t) = e^t$ for $t \le T$,
we can define a new symplectic form on $W^\infty$ by
\begin{equation}
\label{eqn:omegaphi}
\omega_\varphi =
\begin{cases}
\omega & \text{ in $W$,}\\
d(\varphi\lambda) & \text{ in $E_+$.}
\end{cases}
\end{equation}
Observe that $J$ is also compatible with $\omega_\varphi$.  
\begin{Def}\label{def:energyCobordism}
The {energy} of a $J$--holomorphic curve $u : \dot{\Sigma} \to W^\infty$
is
$$
E(u) = \sup_{\varphi \in \tT} \int_{\dot{\Sigma}} u^*\omega_\varphi,
$$
where $\omega_\varphi$ is as defined in \eqref{eqn:omegaphi} and
$\tT$ is the set of all smooth functions $\varphi : \R\to (0,\infty)$
that satisfy $\varphi' > 0$, $\varphi(t) = e^t$ for $t \le T$ and
$\sup \varphi \le e^{2T}$. 
\end{Def}
This is equivalent to the
definition of energy given in \cite{SFTcompactness},
in the sense that uniform bounds on either imply uniform bounds on the other.
As in \S\ref{sec:technical}, we will always assume that finite energy
$J$--holomorphic curves in $W^\infty$ are proper and thus have no removable punctures:
then they also satisfy the asymptotic formula \eqref{eqn:asymptoticformula}
and thus have well defined asymptotic eigenfunctions and transversal
convergence rates at each puncture.

Denote by $\mM^*$ the moduli space of all proper, somewhere injective
finite energy $J$--ho\-lo\-mor\-phic 
curves in $W^\infty$, with arbitrary conformal structures on the domains
and any two curves considered {equivalent} if they are
related by a biholomorphic reparametrization that preserves each puncture.  
We assign to $\mM^*$ the natural topology
defined by $C^\infty$--convergence on compact subsets and
$C^0$--convergence up to the ends, and denote by $\mM^*_0 \subset \mM^*$
the connected component containing~$u_0$.
Observe that since $\int u^*\omega_\varphi$ depends only on $\varphi$ and 
the relative homology class represented by $u$, the energy $E(u)$ is 
uniformly bounded for all $u \in \mM^*_0$.

We shall now define special subsets {$\mM^\cc \subset \mM^*$ and $\mM^\cc_0 \subset \mM^*_0$}, consisting 
of $J$--holomorphic curves that satisfy asymptotic constraints.
If $u_0$ has no punctures, we can simply set {$\mM^\cc = \mM^*$ and
$\mM^\cc_0 = \mM^*_0$}.
Otherwise, let us fix the following notation: for each puncture
$z \in \Gamma$ of $u_0$, denote the corresponding asymptotic orbit of $u_0$ by 
$\gamma_z$, with asymptotic operator $\mathbf{A}_z$, 
asymptotic eigenfunction $e_z$ and transversal convergence 
rate~$-\mu_z$, so $\mu_z \in \sigma(\mathbf{A}_z)$.  Choose any unitary
trivialization $\Phi$ for $\xi$ along each of the orbits $\gamma_z$.
We will define a new partition
$$
\Gamma = \Gamma_C \cup \Gamma_U
$$
{in terms of the asymptotic behavior of $u_0$, calling these}
the \emph{constrained} and \emph{unconstrained} punctures
respectively.  Namely, define $z \in \Gamma$ to be in $\Gamma_C$ if and only
if $\gamma_z$ is either nondegenerate or belongs to a Morse-Bott submanifold
$N \subset M$ that intersects $u_M(\Sigmadot)$.

\begin{Lemma}\label{lemma:MBintersect}
If $\gamma_z$ belongs to a Morse-Bott submanifold $N \subset M$ of dimension
at least~$2$, then $N$ intersects $u_M(\Sigmadot)$ if and only if
$\wind^\Phi(e_z) < 0$, where $\Phi$ is the unique trivialization in which
the nontrivial sections in $\ker\mathbf{A}_z$ have zero winding.
\end{Lemma}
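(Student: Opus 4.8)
The plan is to reduce the statement to an analysis of the asymptotic behaviour of $u_0$ near the puncture $z$, via the asymptotic formula \eqref{eqn:asymptoticformula} and the spectral properties of $\mathbf{A}_z$. Since $N$ is a Morse--Bott submanifold of dimension at least~$2$ containing $\gamma_z$, it carries a nonvanishing vector field and hence (by Prop.~\ref{prop:MBtori}) is a torus; correspondingly the $0$--eigenspace of $\mathbf{A}_z$ is $1$--dimensional, spanned by a nowhere--vanishing eigenfunction $\zeta_0$, and $\Phi$ is by definition the trivialization with $\wind^\Phi(\zeta_0)=0$. First I would fix tubular--neighbourhood coordinates $(t,v)\in (\R/\Z)\times\R^2$ on a neighbourhood $U$ of $\gamma_z$ identifying $\gamma_z$ with $\{v=0\}$ and $\xi|_{\gamma_z}$ with the $\R^2$--factor compatibly with $\Phi$, so that $N\cap U$ is (to leading order along $\gamma_z$) the ``ribbon'' $\{(t,s\zeta_0(t))\}$, the $0$--eigenspace being precisely the direction tangent to the Morse--Bott family inside $\xi$. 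Shrinking $U$ so that it is met only by the end of $u_0$ at $z$ (using that $u_M$ is an embedding and $\gamma_z\not\subset u_M(\dot{\Sigma})$), formula \eqref{eqn:asymptoticformula} shows that $u_M$ there has the form $u_M(\sigma,\tau)\approx(\tau,\,e^{\mu_z\sigma}e_z(\tau))$ with $\mu_z<0$, i.e.~it is a ribbon attached to $\gamma_z$ in the direction $e_z(\tau)$ (here one uses $e_z\not\equiv 0$, since otherwise $u_0$ would be a branched cover of a trivial cylinder, contradicting that $\overline{u_M(\dot{\Sigma})}$ is an embedded surface). Hence $u_M$ meets $N$ inside $U$ if and only if $e_z(\tau)$ and $\zeta_0(\tau)$ are linearly dependent for some~$\tau$, which by the implicit function theorem (absorbing the small remainder $r$ into the leading term) is detected by a transverse zero of $\tau\mapsto\det\big(e_z(\tau),\zeta_0(\tau)\big)$.

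Next I would exploit the winding dichotomy. All punctures of $u_0$ are positive (the Remarks following Definition~\ref{def:star}), so the monotonicity of winding numbers gives $\wind^\Phi(e_z)\le\alpha_-^\Phi(\gamma_z)$, and since $\mu\mapsto\wind^\Phi(\mu)$ is nondecreasing with $\wind^\Phi(\zeta_0)=0$, every negative eigenvalue has winding $\le 0$; thus $\wind^\Phi(e_z)\le 0$, so the condition $\wind^\Phi(e_z)<0$ is the same as $\wind^\Phi(e_z)\ne 0$. If $\wind^\Phi(e_z)<0$, let $g$ be a continuous determination of $\arg e_z(\tau)-\arg\zeta_0(\tau)$, so $g(1)-g(0)=2\pi\,\wind^\Phi(e_z)\ne 0$; then $g$ sweeps an interval of length $\ge 2\pi$ and crosses a value in $\pi\Z$ with nonzero degree, producing a transverse zero of $\det(e_z,\zeta_0)$ and hence a genuine intersection of $u_M$ with $N$. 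If instead $\wind^\Phi(e_z)=0$, then $e_z$ is an eigenfunction of $\mathbf{A}_z$ with the same winding as the $0$--eigenfunction $\zeta_0$; writing the linear ODE satisfied by an eigenfunction in polar form, in which the angular velocity is strictly monotone in the eigenvalue, one sees that at any point where two eigenfunctions with distinct eigenvalues are linearly dependent the relative angle has strictly positive derivative, and a periodic function with this property never takes a value in $\pi\Z$ (cf.~the spectral analysis in \cite{HWZ:props2}). Thus $e_z$ and $\zeta_0$ are nowhere dependent and $u_M$ is disjoint from $N$ inside~$U$.

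It remains, in the case $\wind^\Phi(e_z)=0$, to rule out intersections of $u_M(\dot{\Sigma})$ with $N$ \emph{away} from $\gamma_z$, and this is the step I expect to require the most care. Since $u_M$ is an embedding so is $u_0=(u_\R,u_M)$, and $u_M(\dot{\Sigma})$ meets $N$ if and only if $u_0(\dot{\Sigma})$ meets the $J$--holomorphic trivial cylinder $C_\gamma=\R\times\gamma$ over some Reeb orbit $\gamma\subset N$. For each orbit $\gamma\subset N$ that is not an asymptotic orbit of $u_0$ (there are only finitely many exceptions), positivity of intersections shows that $u_0$ and $C_\gamma$ meet in finitely many points counted positively, so the algebraic intersection number $u_0\ast C_\gamma$ is a nonnegative homotopy invariant. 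Letting $\gamma$ run through $N$ towards $\gamma_z$, the local picture of the previous paragraph shows that no intersection point escapes to the end over $\gamma_z$ --- equivalently, the asymptotic defect $\alpha_-^\Phi(\gamma_z)-\wind^\Phi(e_z)$ vanishes --- so $u_0\ast C_\gamma=0$ for orbits $\gamma$ near $\gamma_z$; using that $N$ is connected and foliated by these orbits together with positivity, one propagates $u_0\ast C_\gamma=0$ to every orbit $\gamma\subset N$, whence $u_M(\dot{\Sigma})\cap N=\emptyset$. The delicate point is precisely this interplay between the asymptotic formula and positivity of intersections in the symplectization used to propagate disjointness across all of $N$; the elementary winding computation in the earlier steps is comparatively routine.
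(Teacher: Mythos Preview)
Your approach is substantially more hands-on than the paper's. The paper dispatches the forward direction in one line (``obvious from the asymptotic formula'') and for the converse simply invokes the intersection theory of punctured holomorphic curves from \cite{Siefring:intersection,SiefringWendl}: the Siefring intersection number $u_0 * C_\gamma$ is homotopy invariant as $\gamma$ ranges over $N$, so a geometric intersection with some $C_{\gamma'}$ would force a nonzero ``asymptotic intersection'' contribution at $\gamma_z$, which is impossible when $\wind^\Phi(e_z)=0$. You instead reprove the relevant pieces of that theory directly: a local model near $\gamma_z$ comparing the ribbons $e_z$ and $\zeta_0$, the angular-ODE monotonicity to exclude linear dependence when the windings agree, and then a positivity/propagation argument for the global step. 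This is a legitimate and more transparent route, and your identification of the global propagation as the delicate point is exactly right.

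There is, however, a genuine gap in your propagation step. You assert that $u_0 * C_\gamma = 0$ for $\gamma$ near $\gamma_z$ (correct, by your local analysis) and then propagate using that $N$ is connected. But the lemma does not exclude the possibility that $N$ contains other asymptotic orbits $\gamma_{z'}$ of $u_0$ (property~$(\star)$(5) only rules this out \emph{after} one knows $N \cap u_M(\dot\Sigma) = \emptyset$, which is what you are trying to prove). Removing these finitely many orbits can disconnect the torus $N$, so local constancy of the geometric count on $N \setminus \{\text{asymptotic orbits}\}$ does not by itself propagate $0$ across all of $N$; intersections could in principle escape or appear through the $z'$-end as $\gamma$ crosses $\gamma_{z'}$. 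The fix is short and uses only your own tools: since the geometric count is $0$ on the $\gamma_z$-side of $\gamma_{z'}$, the $z'$-ribbon cannot meet nearby orbits on that side, and your same angular argument then forces $\wind^\Phi(e_{z'})=0$; hence the $z'$-end is also disjoint from $N$ locally and one may continue past $\gamma_{z'}$. You should state this explicitly. (Minor side remark: your appeal to Prop.~\ref{prop:MBtori} to conclude $N$ is a torus is a mis-citation---that proposition \emph{assumes} $N\cong T^2$---and your argument as written tacitly takes $\dim N = 2$, which is the only case actually used later.)
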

\begin{proof}
It is obvious from the asymptotic formula \eqref{eqn:asymptoticformula}
that $u_M$ intersects $N$ if $\wind^\Phi(e_z) < 0$.  To prove the converse,
observe first that since $u_M$ is embedded, it cannot intersect its own
asymptotic orbits.  One then has to show that if $u_0$ intersects any trivial 
cylinder $\R \times \gamma'$ over an orbit $\gamma'$ in $N$, then it also
has an ``asymptotic intersection'' with $\R \times \gamma_z$, which cannot
be true if $\wind^\Phi(e_z) = 0$.  This follows easily from the intersection
theory of punctured holomorphic curves,
see \cites{Siefring:intersection,SiefringWendl} for details.
\end{proof}

\begin{Lemma}\label{lemma:constraints}
For each $z \in \Gamma_C$,
there exists a number $c_z < 0$ such that
$c_z \not\in \sigma(\mathbf{A}_z)$,
$\alpha^\Phi_-(\gamma_z - c_z) = \wind^\Phi(e_z)$ and
$\alpha^\Phi_+(\gamma_z - c_z) = \wind^\Phi(e_z) + 1$.
\end{Lemma}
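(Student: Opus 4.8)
The plan is to argue purely from the spectral properties of $\mathbf{A}_z$ recalled in \S\ref{sec:technical}: the eigenvalues are real, accumulate only at infinity, and the winding function $\mu\mapsto\wind^\Phi(\mu)$ on $\sigma(\mathbf{A}_z)$ is nondecreasing and attains every integer exactly twice (counting multiplicity). First I would note that the assertion is independent of the trivialization $\Phi$: changing $\Phi$ shifts $\wind^\Phi(e_z)$ and every $\alpha^\Phi_\pm(\gamma_z-c)$ by one and the same integer, while the conditions $c_z<0$ and $c_z\notin\sigma(\mathbf{A}_z)$ involve no trivialization at all. Likewise the relation ``$\wind^\Phi(\eta_1)=\wind^\Phi(\eta_2)$'' between two eigenfunctions is trivialization-independent, so the ``eigenvalue class'' of $e_z$ --- the set of $\mu\in\sigma(\mathbf{A}_z)$ possessing an eigenfunction with the same winding as $e_z$ --- is intrinsically defined. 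Writing $\mu_z'$ for the largest element of this class, the entire lemma reduces to the single claim that $\mu_z'<0$.

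Granting this, here is how I would finish. Fix the trivialization $\Phi$ and set $n:=\wind^\Phi(e_z)$, so $\mu_z'$ is the largest eigenvalue of winding $n$; let $\nu$ be the smallest eigenvalue of winding $n+1$, which exists because winding attains every value. Monotonicity forces $\mu_z'<\nu$ and shows that $(\mu_z',\nu)$ contains no eigenvalue of $\mathbf{A}_z$. Since $\mu_z'<0$, the interval $(\mu_z',\nu)\cap(-\infty,0)$ is nonempty, and it avoids $\sigma(\mathbf{A}_z)$: it lies in $(\mu_z',\nu)$, and should $0$ happen to be an eigenvalue, the constraint of lying in $(-\infty,0)$ already excludes it. I would let $c_z$ be any point of this interval. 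Then $c_z<0$ and $c_z\notin\sigma(\mathbf{A}_z)$, and from the definitions of $\alpha^\Phi_\pm$: the eigenvalues lying below $c_z$ are exactly those of winding $\le n$ (every eigenvalue of winding $\ge n+1$ is $\ge\nu>c_z$), so $\alpha^\Phi_-(\gamma_z-c_z)=n=\wind^\Phi(e_z)$; and the eigenvalues lying above $c_z$ are exactly those of winding $\ge n+1$ (every eigenvalue of winding $\le n$ is $\le\mu_z'<c_z$), so $\alpha^\Phi_+(\gamma_z-c_z)=n+1=\wind^\Phi(e_z)+1$.

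It remains to prove $\mu_z'<0$, which is where I would use the hypothesis $z\in\Gamma_C$, splitting into the two cases of its definition; in each I may (by the remark above) compute windings in whatever trivialization is convenient. If $\gamma_z$ lies in a Morse--Bott submanifold $N$ of dimension at least $2$ with $N\cap u_M(\dot\Sigma)\neq\emptyset$, I would use the trivialization in which $\ker\mathbf{A}_z$ has zero winding; then $0\in\sigma(\mathbf{A}_z)$ with winding $0$, and Lemma~\ref{lemma:MBintersect} gives $\wind(e_z)<0$, so $n\le-1$. By monotonicity every eigenvalue $\ge0$ then has winding $\ge0>n$, so every eigenvalue of winding $n$ --- in particular $\mu_z'$ --- is negative. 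If instead $\gamma_z$ is nondegenerate, I would use the natural trivialization $\Phi_0$ determined by $u_M$ near the puncture, which is characterized by $\wind^{\Phi_0}(e_z)=0$, so $n=0$. By the Remarks following Definition~\ref{def:star} the orbit has nonnegative Conley--Zehnder index in $\Phi_0$, and by condition~(4) of Definition~\ref{def:star} this index is nonzero; writing \eqref{eqn:CZwinding} as $\CZ^{\Phi_0}(\gamma_z)=2\alpha^{\Phi_0}_+(\gamma_z)-p(\gamma_z)$ with $p(\gamma_z)\in\{0,1\}$, this forces $\alpha^{\Phi_0}_+(\gamma_z)\ge1$, i.e.\ every positive eigenvalue of $\mathbf{A}_z$ has winding $\ge1$. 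As $0\notin\sigma(\mathbf{A}_z)$ here, the eigenvalues of winding $0$ are all negative, so again $\mu_z'<0$.

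The step I expect to be the main obstacle is the nondegenerate case: one must rule out the configuration in which the eigenvalue class of $e_z$ straddles $0$, for then no $c_z<0$ with the required properties exists. The resolution is that this configuration is precisely the one in which the Conley--Zehnder index in the natural trivialization vanishes --- which property~$(\star)$ forbids via condition~(4) --- and making it watertight requires carefully threading the relations among $\mu_z'$, the parity $p(\gamma_z)$, the numbers $\alpha^{\Phi_0}_\pm(\gamma_z)$ and $\CZ^{\Phi_0}(\gamma_z)$ through \eqref{eqn:CZwinding}, together with the fact that the natural trivialization is the one in which $e_z$ has winding zero.
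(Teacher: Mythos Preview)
Your proof is correct and follows essentially the same approach as the paper: both arguments reduce the lemma to showing that the largest eigenvalue $\mu_z'$ with winding equal to $\wind^\Phi(e_z)$ is negative, then handle the nondegenerate case via the Conley--Zehnder condition~(4) of Definition~\ref{def:star} and the Morse--Bott case via Lemma~\ref{lemma:MBintersect}. The paper's version is terser (it fixes the trivialization with $\wind^\Phi(e_z)=0$ from the start rather than arguing trivialization-independence), but the logical content is the same.
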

\begin{proof}
Choose $\Phi$ so that $\wind^\Phi(e_z) = 0$; in the language of
Definition~\ref{def:star}, this is the special trivialization determined by the
asymptotic behavior of $u_M$ near~$z$.  Then
$\alpha^\Phi_-(\gamma_z) \ge 0$, and if $\gamma_z$ is nondegenerate,
\eqref{eqn:CZwinding} implies $\CZ^\Phi(\gamma_z) \ge 0$, with equality
if and only if $\alpha^\Phi_-(\gamma_z) = \alpha^\Phi_+(\gamma_z) = 0$.
The latter is therefore excluded by the condition
$\CZ^\Phi(\gamma_z) \ne 0$ from Definition~\ref{def:star}.  It follows that
if $\mu \in \sigma(\mathbf{A}_z)$ is the largest eigenvalue with
$\wind^\Phi(\mu) = \wind^\Phi(e_z)$, then $\mu < 0$ and we can choose
$c_z$ to be any number slightly larger than $\mu$.

For the case where $\gamma_z$ is Morse-Bott, the fact that $u_M$ intersects
the Morse-Bott submanifold means $0 = \wind^\Phi(e_z) < \wind^\Phi(0)$
due to Lemma~\ref{lemma:MBintersect}.
Thus the eigenvalue $\mu$ defined above is again negative and we can choose
$c_z$ to be slightly larger.
\end{proof}

In the following, let $c_z < 0$ denote the number given by
Lemma~\ref{lemma:constraints} for each constrained puncture $z \in \Gamma_C$, 
and for $z \in \Gamma_U$ set $c_z := \epsilon > 0$ small
enough so that $(0,\epsilon)$ never intersects $\sigma(\mathbf{A}_z)$.

\begin{Def}\label{def:constrained}
The constrained moduli space $\mM^\cc$
consists of all curves {$u \in \mM^*$ having at most $\#\Gamma$ punctures,
which can be identified with a subset of $\Gamma$ in such a way
that at every $z \in \Gamma_C$ that is a puncture of~$u$,
the asymptotic orbit of $u$ is $\gamma_z$,} with transversal
convergence rate strictly greater than $|c_z|$.  Let $\mM^\cc_0 \subset
\mM^\cc$ denote the connected component containing~$u_0$.
\end{Def}

\begin{Prop}\label{prop:embedded}
Every curve $u \in \mM^\cc_0$ is embedded.
\end{Prop}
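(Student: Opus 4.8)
The plan is to show that $u_0$ itself is embedded, and then that embeddedness is both an open and a closed condition on $\mathcal M^{\mathbf c}_0$, so that it propagates to the whole connected component. For $u_0$: by hypothesis (Assumption~\ref{assumptions}), $u_0$ is either a symplectically embedded sphere (hence embedded by fiat after reparametrization) or the curve $u = (u_\R, u_M)$ furnished by property~$(\star)$, whose $M$--component $u_M$ is an embedding with embedded closure. In the latter case, $u_0$ itself is automatically embedded in $W^\infty$ as a graph over $u_M$ — two points with the same image in $\R \times M$ would have the same image in $M$, contradicting injectivity of $u_M$. So the base case is essentially definitional.

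The core of the argument is a deformation-invariance statement for the (normalized, asymptotically weighted) self-intersection number of curves in $\mathcal M^{\mathbf c}_0$. Here I would invoke Siefring's intersection theory for punctured holomorphic curves (already cited in the excerpt, \cites{Siefring:intersection,SiefringWendl}), which produces a homotopy-invariant intersection pairing $u * u \in \Z$ and an adjunction-type formula
\[
u * u = 2\bigl(\delta(u) + \delta_\infty(u)\bigr) + c_N(u),
\]
where $\delta(u) \ge 0$ counts actual double points and critical points, $\delta_\infty(u) \ge 0$ is a nonnegative count of ``hidden'' intersections at infinity between distinct ends and between ends and themselves, and $c_N(u)$ is a normal-Chern-number term expressible via the Conley--Zehnder indices at the constrained/unconstrained punctures (with the weights $c_z$ from Lemma~\ref{lemma:constraints} built in). Because $u * u$ depends only on the relative homology class and the asymptotic data, and both are locally constant on $\mathcal M^{\mathbf c}_0$, it equals its value on $u_0$. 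The key numerical input is that the asymptotic constraints were rigged precisely so that $c_N(u_0)$ is minimal: the conditions in Definition~\ref{def:star} — no nondegenerate asymptotic orbit with Conley--Zehnder index zero in the natural trivialization, asymptotic orbits simply covered and distinct, the winding constraint $\wind^\Phi(e_z) = \alpha^\Phi_-(\gamma_z - c_z)$ from Lemma~\ref{lemma:constraints} — together force $u_0 * u_0 = 0$ and $\delta_\infty = 0$ for $u_0$, and more to the point force the same adjunction-theoretic identity to imply $\delta(u) = \delta_\infty(u) = 0$ for \emph{every} $u \in \mathcal M^{\mathbf c}_0$, since the right-hand side is a sum of nonnegative terms that must vanish. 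Then $\delta(u) = 0$ together with somewhere-injectivity (curves in $\mathcal M^*$ are somewhere injective by definition) gives that $u$ is injective with injective differential, i.e.\ embedded.

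One technical point to handle carefully: a curve $u \in \mathcal M^{\mathbf c}_0$ may have fewer punctures than $u_0$ (Definition~\ref{def:constrained} allows the puncture set to be a proper subset of $\Gamma$), so the comparison of asymptotic contributions must account for the possibility that some ends of $u_0$ have ``degenerated away'' along the component. This is where the unconstrained punctures and the small positive weights $c_z = \epsilon$ enter: for $z \in \Gamma_U$ the orbit $\gamma_z$ lies in a $2$--torus Morse-Bott family disjoint from $u_M(\dot\Sigma)$ (by property~$(\star)$(5)), and one checks that dropping such a puncture, or having its asymptotic orbit move within the Morse-Bott torus, changes neither the relative class nor the relevant index contribution — so the adjunction identity is stable under these moves as well. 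I would phrase the whole thing as: $u * u$ (computed with the constraints $c_z$) is deformation-invariant on $\mathcal M^{\mathbf c}_0$, equals $0$ on $u_0$, and the adjunction formula $0 = 2(\delta(u) + \delta_\infty(u)) + 0$ forces $\delta(u) = 0$, hence $u$ embedded.

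The main obstacle is purely bookkeeping with Siefring's intersection pairing: one must get the exact relationship between the weights $c_z$, the winding numbers $\alpha^\Phi_\pm$, and the ``extremal'' vs.\ non-extremal asymptotic contributions right, so that the normal-bundle/adjunction term genuinely vanishes (and not merely is $\le 0$, which would be useless). Concretely, the delicate inequality is that each constrained end contributes its minimal possible amount to $c_N(u)$ precisely because Lemma~\ref{lemma:constraints} pins $\alpha^\Phi_-(\gamma_z - c_z) = \wind^\Phi(e_z)$, and each unconstrained end contributes its minimal amount because of the $\epsilon$--weighting and the Morse-Bott torus structure; combined with $\windpi(u_0) = 0$ (equivalently $u_M$ immersed, which is built into property~$(\star)$(1)) this forces the total normal count to zero. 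Everything else — openness of embeddedness is classical for somewhere-injective curves, closedness follows from Gromov/SFT compactness applied to $\mathcal M^{\mathbf c}_0$ with the uniform energy bound already noted, plus the fact that the intersection number cannot jump — is routine.
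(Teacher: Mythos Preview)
Your approach is correct but substantially heavier than what the paper does. The paper's proof is a three-line open--closed argument: first it notes (from Definition~\ref{def:star} and the constraint setup) that every $u \in \mM^\cc_0$ has distinct, simply covered asymptotic orbits, so the curves are somewhere injective and embeddedness is an open condition; then it observes that the embedded locus is closed by local positivity of intersections (a non-embedded somewhere injective curve has an isolated double point or critical point with positive local multiplicity, which persists under perturbation); since $\mM^\cc_0$ is connected and contains the embedded curve $u_0$, every curve is embedded.

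You instead invoke the full Siefring adjunction formula and homotopy invariance of the constrained self-intersection number to compute $\delta(u) = 0$ directly. This works, and in fact the paper carries out exactly the computations you outline---$c_N(u_0;\cc) = 0$ via \eqref{eqn:windpi} and $i(u_0;\cc\,|\,u_0;\cc) = 0$ via the adjunction formula \eqref{eqn:adjunction}---but only \emph{later}, in the proofs of Theorem~\ref{thm:IFT} and Lemma~\ref{lemma:selfIntersection}, where that information is actually needed. For the bare statement that each $u$ is embedded, the local positivity argument suffices and avoids front-loading the Siefring machinery. Your concern about curves with fewer punctures than $u_0$ is also moot here: within a connected component of the moduli space the topological type of the domain is fixed, so the puncture set does not shrink. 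Finally, your closing remark that ``closedness follows from Gromov/SFT compactness'' is unnecessary in either approach---closedness of the embedded locus inside $\mM^\cc_0$ is a pointwise statement about convergent sequences already in $\mM^\cc_0$, and needs no global compactness.
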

\begin{proof}
By Definition~\ref{def:star},
each asymptotic orbit for the curves in $\mM^\cc_0$ is either fixed or allowed
to vary in a Morse-Bott torus that contains no other asymptotic orbits,
thus the orbits of each $u \in \mM^\cc_0$ are all distinct and simply covered.
It follows that embedded curves form an open subset of $\mM^\cc_0$, which is
also non-empty since it contains~$u_0$.  By positivity of intersections, it
is also closed, so the claim follows from the assumption that
$\mM^\cc_0$ is connected.
\end{proof}

Topologically, $\mM^\cc$ is a closed subspace of $\mM^*$.
Recall that $\mM^*$ can locally be
identified (up to symmetries) with the zero set of the nonlinear 
Cauchy-Riemann operator $\dbar_J$, regarded as a smooth section of a 
certain Banach space bundle.  The same is true for $\mM^\cc$, but with
Banach spaces of maps whose behavior at the ends satisfies exponential 
weighting constraints determined by the numbers $c_z$.  We refer to
\cites{Wendl:automatic,Wendl:BP1} for details on the general analytical setup,
and \cites{HWZ:props3,Wendl:compactnessRinvt,Wendl:BP1} for the exponential
weights.  A given curve $u \in \mM^\cc$ is called 
{Fredholm regular} if the linearization of
$\dbar_J$ at $u$ is surjective.  In general, this linearization
is a Fredholm operator, whose index (with correction terms for the
dimensions of Teichm\"uller space and the automorphism group) defines
the {``virtual dimension''} of the moduli space near~$u$.  {We'll denote
this virtual dimension by $\ind(u ; \cc)$, and call it the (constrained)
index of~$u$.}  {If $u$ is Fredholm regular, then the implicit function
theorem implies that} $\mM^\cc$ near $u$ is a smooth manifold,
{whose dimension is given by the index.}

\begin{Thm}\label{thm:IFT}
Every $u \in \mM^\cc_0$ is Fredholm regular and has $\ind(u ; \cc) = 2$.  
Moreover, a neighborhood of $u$ in $\mM^\cc_0$ forms a smooth
$2$--parameter family $\{ u^\tau \}_{\tau \in \D}$, with $u^0 = u$, such that:
\begin{enumerate}
\item
The images $u^\tau(\dot{\Sigma})$
foliate a neighborhood of $u(\dot{\Sigma})$ in~$W$.
\item
For any puncture $z \in \Gamma_U$, the set of all curves 
$\{ u^\tau \}_{\tau \in \D}$ that approach the same orbit as $u$ at $z$
is a smooth $1$--dimensional submanifold.
\end{enumerate}
\end{Thm}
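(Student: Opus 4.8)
The plan is to deduce Theorem~\ref{thm:IFT} from the automatic transversality machinery for punctured holomorphic curves in dimension four of \cite{Wendl:automatic}, in the form adapted to exponentially weighted moduli spaces (\cite{Wendl:BP1}). Since the constrained index $\ind(u;\cc)$ and the associated normal first Chern number $c_N(u;\cc)$ are locally constant on $\mM^\cc$, it is enough to compute them for $u = u_0$; the crux is that the hypotheses of property~$(\star)$, together with the weights $c_z$ supplied by Lemma~\ref{lemma:constraints}, force $\ind(u_0;\cc) = 2$ and $c_N(u_0;\cc) = 0$. Throughout I would use the trivialization $\Phi$ of $\xi$ along each asymptotic orbit $\gamma_z$ in which $\wind^\Phi(e_z) = 0$, i.e.~the natural trivialization determined by $u_M$ near $z$ from Definition~\ref{def:star}.

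The index computation is the first step. In case~(1) of Assumption~\ref{assumptions}, $u_0$ is an embedded symplectic sphere with $u_0 \bullet u_0 = 0$, so adjunction gives $c_1(u_0^*TW) = 2$, whence $\ind(u_0) = 2 c_1(u_0^*TW) - \chi(S^2) = 2$ and $c_N(u_0) = u_0 \bullet u_0 = 0$. In case~(2), $u_0 = (u_\R, u_M) : \dot{\Sigma} \to \R \times M$ has genus zero and $N$ positive punctures at distinct simply covered orbits, and since $u_M$ is an embedding the section $\pi_\lambda \circ Tu_M$ has no zeros, i.e.~$\windpi(u_0) = 0$. Inserting this into the standard identity expressing $c_1^\Phi(u_0^*\xi)$ through $\windpi(u_0)$ and the asymptotic winding data (cf.~\cites{HWZ:props2,Wendl:automatic}), and then into the weighted Fredholm index formula whose puncture terms are $\CZ^\Phi(\gamma_z - c_z)$, should give $\ind(u_0;\cc) = 2$. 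At each constrained puncture $z \in \Gamma_C$ the term is pinned down by Lemma~\ref{lemma:constraints}, which chose $c_z$ so that $\alpha^\Phi_-(\gamma_z - c_z) = 0$ and $\alpha^\Phi_+(\gamma_z - c_z) = 1$, hence $\CZ^\Phi(\gamma_z - c_z) = 1$ by \eqref{eqn:CZwinding}; at an unconstrained puncture $z \in \Gamma_U$ one uses that $\gamma_z$ lies in a Morse--Bott torus disjoint from $u_M(\dot{\Sigma})$ (condition~(5) of Definition~\ref{def:star}), together with Proposition~\ref{prop:MBtori} and the winding bound on the asymptotic eigenfunction $e_z$, to see that its weighted term is again odd. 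Condition~(4) of Definition~\ref{def:star} --- no nondegenerate asymptotic orbit with $\CZ^\Phi = 0$ --- is exactly what excludes the one remaining even possibility at the constrained punctures, so that every weighted Conley--Zehnder term is odd; this in turn forces $c_N(u_0;\cc) = 0$. I expect this index bookkeeping --- done uniformly over both kinds of punctures while tracking the exponential weights --- to be the main obstacle; everything afterwards is comparatively formal.

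Granting $\ind(u;\cc) = 2$, genus $0$, and $c_N(u;\cc) = 0$ for every $u \in \mM^\cc_0$, and recalling from Proposition~\ref{prop:embedded} that these curves are embedded (in particular immersed), the automatic transversality criterion of \cite{Wendl:automatic} in its weighted form applies and gives Fredholm regularity. By the implicit function theorem, a neighborhood of $u$ in $\mM^\cc_0$ is then a smooth $2$--manifold, which I would write as $\{u^\tau\}_{\tau \in \D}$ with $u^0 = u$. The same argument shows that every nontrivial element of the kernel of the normal linearized Cauchy--Riemann operator is nowhere zero, so evaluation at each point of $\dot{\Sigma}$ maps this $2$--dimensional kernel isomorphically onto the normal fibre; hence the evaluation map $\D \times \dot{\Sigma} \to W^\infty$ has everywhere full rank and is a local diffeomorphism. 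Since the curves $u^\tau(\dot{\Sigma})$ are embedded and, by the generalized positivity of intersections of \cites{Siefring:intersection,SiefringWendl} applied to their vanishing Siefring self-intersection number, pairwise disjoint, the evaluation map is also injective, so its image is an open neighborhood of $u(\dot{\Sigma})$ foliated by the $u^\tau(\dot{\Sigma})$. This gives statement~(1).

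For statement~(2), fix $z \in \Gamma_U$ and let $g_z : \D \to S^1$ send $\tau$ to the asymptotic orbit of $u^\tau$ at $z$, viewed as a point in the orbit space ($\cong S^1$) of the Morse--Bott torus $N_z$. The fibre of $g_z$ through $u$ is, near $u$, the moduli space for the same problem with the additional codimension-one constraint that the orbit at $z$ be fixed; this strengthens the weight $c_z$ and lowers the index to $1$, while the automatic transversality criterion survives the modification, so this constrained problem is again Fredholm regular and its local solution set is a smooth $1$--manifold. Equivalently, $g_z$ is a submersion at $0$, so the set of curves among $\{u^\tau\}_{\tau \in \D}$ sharing the asymptotic orbit of $u$ at $z$ is a smooth $1$--dimensional submanifold --- which is statement~(2).
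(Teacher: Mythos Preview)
Your proposal is correct and follows essentially the same route as the paper: compute $c_N(u_0;\cc)=0$ and show all weighted punctures have odd parity, then invoke the automatic transversality and normal bundle arguments of \cites{HWZ:props3,Wendl:automatic} for regularity, the foliation, and the codimension-one constrained subproblem at each $z\in\Gamma_U$. The only organizational difference is that the paper computes $c_N(u_0;\cc)=0$ first via \eqref{eqn:windpi} and then reads off $\ind(u_0;\cc)=2$ from \eqref{eqn:cnIndex}, whereas you aim for the index directly through $c_1^\Phi(u_0^*\xi)$ and then deduce $c_N=0$; also, at the unconstrained punctures the relevant input is Lemma~\ref{lemma:MBintersect} rather than Proposition~\ref{prop:MBtori}, and your appeal to Siefring intersection theory for pairwise disjointness is more than is needed for the purely local foliation statement, which the paper gets straight from the nowhere-vanishing sections in $\ker\mathbf{D}_u^N$.
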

\begin{proof}
We first verify the claim that $\ind(u ; \cc) = 2$.  For the case where $u$ is
a closed embedded sphere with self-intersection zero, this follows immediately
from the adjunction formula: $0 = u \bullet u = c_1(u^*TW^\infty) - 2$,
thus $c_1(u^*TW^\infty) = 2$ and $\ind(u) = -2 + 2 c_1(u^*TW^\infty) = 2$.

In the case where $u_0$ arises from property~$(\star)$, it 
suffices to prove that $\ind(u_0 ; \cc) = 2$
with $u_0$ regarded as a $J_+$--holomorphic curve in $\R \times M$.
Recall from \cite{Wendl:compactnessRinvt} that one can associate with
$u_0$ an integer $c_N(u_0 ; \cc)$, called the (constrained) normal Chern
number, which satisfies
\begin{equation}\label{eqn:cnIndex}
2 c_N(u_0 ; \cc) = \ind(u_0 ; \cc) - 2 + 2g + \#\Gamma_0(\cc),
\end{equation}
where $g$ is the genus of $\dot{\Sigma}$ (in this case zero) and
$\Gamma_0(\cc)$ is the subset of punctures $z \in \Gamma$ at which
$p(\gamma_z - c_z) = 0$.  It also satisfies
\begin{equation}\label{eqn:windpi}
c_N(u_0 ; \cc) = \windpi(u_0) + \sum_{z \in \Gamma} \left[
\alpha^\Phi_-(\gamma_z - c_z) - \wind^\Phi(e_z) \right].
\end{equation}
By Lemma~\ref{lemma:constraints} and the fact that $u_M : \Sigmadot \to M$
is an embedding, the right hand side of \eqref{eqn:windpi} vanishes,
implying $c_N(u_0 ; \cc) = 0$.  We claim also that $\#\Gamma_0(\cc) = 0$,
i.e.~all punctures satisfy $p(\gamma_z - c_z) = 1$; for $z \in \Gamma_C$
this already follows from Lemma~\ref{lemma:constraints}.  For unconstrained
punctures $z \in \Gamma_U$, Lemma~\ref{lemma:MBintersect} implies that
$e_z$ has the same winding number as a nontrivial section in
$\ker\mathbf{A}_z$: these also span the two eigenspaces of 
$\mathbf{A}_z - c_z = \mathbf{A}_z - \epsilon$ with negative eigenvalues
closest to zero.  It follows that every positive eigenvalue of
$\mathbf{A}_z - \epsilon$ has strictly larger winding, thus 
$p(\gamma_z - \epsilon) = 1$ as claimed.  Now \eqref{eqn:cnIndex} implies
$\ind(u_0 ; \cc) = 2$.

The remainder of the proof consists of minor generalizations of well 
established results from \cites{HWZ:props3,Wendl:thesis}, so we shall merely
sketch the main ideas.  Since $u \in \mM^\cc_0$ is embedded, 
the regularity question can be reduced to the study
of the \emph{normal} Cauchy-Riemann operator $\mathbf{D}_u^N$ as in 
\cites{HoferLizanSikorav,HWZ:props3,Wendl:automatic}.  The domain of
$\mathbf{D}_u^N$ is an exponentially weighted Banach space of sections of
the normal bundle $N_u \to \dot{\Sigma}$, and the sections in 
$\ker\mathbf{D}_u^N$ have only positive zeroes, whose algebraic count is
bounded in general by $c_N(u ; \cc)$, cf.~\cite{Wendl:automatic}.
In our case $c_N(u ; \cc) = c_N(u_0 ; \cc) = 0$, thus
every section in $\ker\mathbf{D}_u^N$ is zero free;
a simple linear independence argument then shows that 
$\dim\ker\mathbf{D}_u^N \le 2 = \ind \mathbf{D}_u^N$, hence 
$\mathbf{D}_u^N$ is surjective.  This shows that $\mM^\cc_0$ is a smooth
$2$--manifold near $u$, and $T_u \mM^\cc_0$ is identified with a space 
of smooth nowhere vanishing sections $\ker\mathbf{D}_u^N \subset \Gamma(N_u)$, 
implying the claim that the curves near $u$ foliate a neighborhood.

Finally we note that for each $z \in \Gamma_U$, one can apply an additional
constraint to study subspaces of curves in $\mM^\cc_0$ that fix the
position of the asymptotic orbit.
In the linearization this amounts to replacing $c_z = \epsilon$ by
$c_z = -\epsilon$; this idea is explained in detail in 
\cites{Wendl:thesis,Wendl:BP1}.  The problem with the additional constraint
then has index~$1$ and is again regular by an argument using the formal adjoint
of $\mathbf{D}_u^N$, as in \cite{Wendl:automatic}.
\end{proof}

Note that in the above proof, Fredholm regularity does not require any
genericity assumptions, rather it comes for free due to
``automatic'' transversality (cf.~\cite{Wendl:automatic}).  As a consequence,
{$u_0$ can be deformed with sufficiently small perturbations of $J$ and 
$\lambda$ so that Theorem~\ref{thm:IFT} still applies.  After such a 
perturbation (using Lemma~\ref{lemma:nondegeneracy}), we can therefore 
assume the following from now on:}
\begin{enumerate}
\item {All orbits of period less than some large constant $C > 0$ are
Morse-Bott}.
\item $J$ is generic outside of $[T,\infty) \times M$, so that in particular
every curve $u \in \mM^\cc$ that isn't wholly contained
in $[T,\infty) \times M$ has $\ind(u ; \cc) \ge 0$.
\end{enumerate}
The exact details of our generic perturbation of $J$ are somewhat
delicate and specific to the application we have in mind; this will be
explained in Lemma \ref{lemma:genericity} in \S\ref{sec:proofs}.  
{Note that
the purpose of this assumption has nothing to do with the curves in
$\mM^\cc_0$, which are already regular---rather we will see below that 
genericity is needed to gain control over the degenerations that can occur
in the natural compactification of~$\mM^\cc_0$.}

{Due to the Morse-Bott assumption,}
the compactness theorem of \cite{SFTcompactness} now applies to
any sequence of $J$--holomorphic curves in $W^\infty$ that satisfy a suitable
$C^0$--bound and energy bound: in particular, such a sequence has a 
subsequence that converges to a {nodal holomorphic building},
typically with multiple {levels}.  In our situation, the bottom level
will be a nodal $J$--holomorphic curve in $W^\infty$, and all levels above
this are nodal $J_+$--holomorphic curves in $\R \times M$.

\begin{Thm}\label{thm:compactness}
Suppose $u_k \in \mM^\cc_0$ is a sequence whose images are all contained
in $W_0 \cup E_+$ for some compact subset $W_0 \subset W$.  Then a
subsequence of $u_k$ converges to one of the following:
\begin{enumerate}
\item another smooth curve in $\mM^\cc_0$,
\item a {holomorphic building with empty bottom level and one
nontrivial upper level that consists of}
a smooth, embedded $J_+$--holomorphic curve
in $\R\times M$ satisfying the conditions of property~$(\star)$, or
\item a nodal $J$--holomorphic curve in $W^\infty$
with {exactly two components, both in $\mM^\cc$ and
both embedded with (constrained) index~$0$.}
\end{enumerate}
Moreover the set of index~$0$ curves that can appear as components of
nodal curves in the third case is finite.
\end{Thm}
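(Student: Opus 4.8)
The plan is to invoke the SFT compactness theorem of \cite{SFTcompactness} and then pin down the limit by bookkeeping with the constrained Fredholm index and the normal Chern number relations \eqref{eqn:cnIndex}--\eqref{eqn:windpi}. First note that $E(u_k)$ is uniformly bounded, since $\int_{\Sigmadot}u^{*}\omega_{\varphi}$ depends only on $\varphi$ and on the relative homology class of $u$, which is locally constant on the connected space $\mM^{\cc}_0$; together with the hypothesis that the images lie in $W_0\cup E_{+}$ and the standing Morse-Bott assumption, this places us in the setting of \cite{SFTcompactness}, so a subsequence of $u_k$ converges to a nodal holomorphic building $\mathbf{u}$ whose bottom level is a (possibly empty) nodal $J$--holomorphic curve in $W^{\infty}$ and whose upper levels are nodal $J_{+}$--holomorphic curves in $\R\times M$. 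Since all domains have genus zero, $\mathbf{u}$ is a tree of spheres of total arithmetic genus zero; and the constraint defining $\mM^{\cc}$ at each $z\in\Gamma_C$ passes to the limit by the exponential weight analysis of \cites{Wendl:compactnessRinvt,SiefringWendl}, so the top component carrying a given $z\in\Gamma_C$ is still asymptotic to $\gamma_z$ with transversal convergence rate greater than $|c_z|$; in particular no breaking occurs at the constrained punctures.

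The next step is to control every orbit appearing in $\mathbf{u}$. Along any level the sum of the periods of the positive punctures is at least that of the negative punctures (the integral of $u^{*}d\lambda$ over the level being nonnegative), so this quantity is non-increasing from top to bottom of $\mathbf{u}$; at the top it equals $T_1+\dots+T_N$, the corresponding sum for $u_0$. Hence every breaking orbit, and every unconstrained asymptotic orbit of $\mathbf{u}$, has period at most $T_1+\dots+T_N$, and by condition~(3) of Definition~\ref{def:star} therefore lies in one of the Morse-Bott submanifolds containing an asymptotic orbit of $u_0$; by condition~(5) these are tori, so by Proposition~\ref{prop:MBtori} all their covers are Morse-Bott with the same minimal period. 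A positivity of intersections argument in the style of \cites{Siefring:intersection,SiefringWendl}, using that the $u_k$ are embedded (Proposition~\ref{prop:embedded}) with pairwise distinct, simply covered asymptotic orbits, then shows that no component of $\mathbf{u}$ other than a trivial cylinder is a nontrivial multiple cover, that all breaking orbits are simply covered, and that every nontrivial component of $\mathbf{u}$ is embedded with vanishing adjunction defect. Finally, exactly as in the proof of Theorem~\ref{thm:IFT}, Lemma~\ref{lemma:constraints} and Lemma~\ref{lemma:MBintersect} give $p(\gamma-c)=1$ at every puncture of every component, so $\#\Gamma_0(\cc)=0$ in \eqref{eqn:cnIndex}.

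The heart of the matter, and the step I expect to be the main obstacle, is the ensuing index count. By the genericity assumption, every somewhere injective bottom-level component not wholly contained in $[T,\infty)\times M$ has nonnegative constrained index; moreover no bottom-level component can lie wholly in $[T,\infty)\times M$ at all (having only positive punctures, such a curve escapes to $+\infty$ and would instead appear on a symplectization level), so every bottom-level component --- somewhere injective and embedded by the previous step --- is Fredholm regular. Meanwhile, by \eqref{eqn:cnIndex} with genus zero and $\#\Gamma_0(\cc)=0$, every nontrivial component of $\mathbf{u}$ lying in the symplectization has constrained index $2+2c_N\ge 2$. Feeding these facts into the standard SFT dimension formula for the stratum of the compactification of $\mM^{\cc}_0$ occupied by $\mathbf{u}$ (cf.~\cites{Wendl:thesis,Wendl:automatic}), together with the action budget of the previous paragraph and the equality $\dim\mM^{\cc}_0=2$, a case analysis leaves exactly: (i) no nodes and no nontrivial symplectization level, so $\mathbf{u}$ is a single smooth curve, lying in $\mM^{\cc}$ by the preserved constraints and hence in $\mM^{\cc}_0$ since connected components are closed --- case~(1); (ii) an empty bottom level and a single nontrivial symplectization level, which is one embedded curve of constrained index $2$ satisfying all the conditions of property~$(\star)$ --- case~(2); (iii) no nontrivial symplectization level and a bottom level with exactly two somewhere injective, embedded components, each of constrained index $0$ and, inheriting the constraints at the punctures it carries, in $\mM^{\cc}$ --- case~(3). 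The genuinely delicate points are: excluding multiply covered and long orbit breaking via the tight action budget encoded in conditions~(3)--(5) of Definition~\ref{def:star}; getting the Morse-Bott corrections in the dimension formula exactly right; and controlling components that run into the cylindrical end, where the index bound from the genericity of $J$ is unavailable and one must instead rely on $c_N\ge 0$.

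For the final assertion, any index-$0$ curve $B$ appearing as a component in case~(iii) is somewhere injective, lies in $\mM^{\cc}$, has energy at most $E(u_0)$, and cannot lie wholly in $[T,\infty)\times M$ (where the constrained index is $\ge 2$ by \eqref{eqn:cnIndex}); hence $B$ is Fredholm regular by genericity and so isolated in $\mM^{\cc}$. Applying \cite{SFTcompactness} to an arbitrary sequence of such curves and rerunning the index bookkeeping shows that no nontrivial degeneration can occur --- it would produce a somewhere injective component of negative constrained index, contradicting genericity --- so this collection of curves is precompact. Being simultaneously precompact and discrete, it is finite.
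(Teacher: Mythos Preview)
Your overall architecture---SFT compactness, then index bookkeeping---is right, and your treatment of cases~(1) and~(3) is close to the paper's. But you diverge from the paper at the crucial step of excluding an upper symplectization level containing a nontrivial component $u_+$ with \emph{negative} punctures, and your substitute argument has a real gap.

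The paper does \emph{not} use an index count here. Instead it observes that the negative asymptotic orbits of $u_+$ must, by the action inequality and condition~(3) of Definition~\ref{def:star}, lie in the same Morse-Bott submanifolds as the orbits of $u_0$, and then argues directly with asymptotic winding numbers---in three cases, according to whether the relevant submanifold $N$ is a circle, a torus disjoint from $u_M(\Sigmadot)$, or a Morse-Bott manifold intersecting $u_M(\Sigmadot)$---that the projection of $u_+$ to $M$ must meet the projection of $u_0$. This forces an actual intersection of $u_k$ (for large~$k$) with an $\R$--translate of $u_0$, contradicting $i(u_k;\cc\,|\,u_0;\cc)=i(u_0;\cc\,|\,u_0;\cc)=0$ from Lemma~\ref{lemma:selfIntersection}. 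Your proposal never invokes this vanishing intersection number, which is the engine of the paper's argument.

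Your alternative---asserting that every nontrivial symplectization component has constrained index $\ge 2$ via \eqref{eqn:cnIndex}--\eqref{eqn:windpi}---is not established. Those formulas are stated and applied in the paper only for curves with all punctures \emph{positive} and asymptotic to the specific orbits $\gamma_z$ with the specific constraints~$c_z$. At a negative puncture on a breaking orbit you would have to choose a compatible constraint so that a gluing dimension formula holds, verify $p(\gamma-c)=1$ there, and check the sign of the winding-defect contribution; you flag exactly these points as ``delicate'' and then do not carry them out. Similarly, your appeal to ``positivity of intersections in the style of \cites{Siefring:intersection,SiefringWendl}'' to rule out multiply covered components and multiply covered breaking orbits is a gesture rather than an argument: the paper instead handles possible closed multiple covers in the nodal case by the formula $\ind(v)=k\cdot\ind(v_0)+2(k-1)$ and excludes a smooth double cover of a sphere by a parity check in the adjunction formula. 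The same intersection argument with $u_0$ is rerun at the end to prove finiteness of the index--$0$ curves.
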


Before we prove the theorem we state the following important corollary. 
For this, we denote by $S\subset W^\infty$ the set through which 
the finitely many limit curves from part~(3) of Theorem~\ref{thm:compactness} 
pass, {and let $C \subset W^\infty \setminus S$ consist of all points
that are contained in curves from $\mM^\cc_0$.}

\begin{Cor}\label{cor:foliation}
In addition to the assumptions of Theorem \ref{thm:compactness}, assume that the images of all curves in $\mM^\cc_0$ are contained in $W_0\cup E_+$ for some compact subset $W_0\subset W$. Then $C = W^\infty \setminus S$, and thus $W$ is compact.
\end{Cor}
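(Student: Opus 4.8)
The plan is to prove that $C$ is a nonempty, open, and closed subset of the space $W^\infty\setminus S$; since the latter is connected this forces $C=W^\infty\setminus S$, and the compactness of $W$ then drops out of a point-set argument. \emph{Connectedness} of $W^\infty\setminus S$ holds because $S$ is a finite union of images of properly embedded holomorphic curves (the index~$0$ limit curves from part~(3) of Theorem~\ref{thm:compactness}), hence a closed subset of Hausdorff codimension~$2$ in the connected $4$--manifold $W^\infty$, and removing such a set does not disconnect. \emph{Nonemptiness}: $u_0\in\mM^\cc_0$, and since $\ind(u_0;\cc)=2$ by Theorem~\ref{thm:IFT} while every curve contributing to $S$ has constrained index~$0$, the curve $u_0$ is not one of those; by positivity of intersections for somewhere injective curves, $u_0(\dot\Sigma)\cap S$ is then nowhere dense in $u_0(\dot\Sigma)$, so $C\supseteq u_0(\dot\Sigma)\setminus S\neq\emptyset$. \emph{Openness}: if $p\in C$, then $p\in u(\dot\Sigma)$ for some $u\in\mM^\cc_0$, and part~(1) of Theorem~\ref{thm:IFT} provides a neighborhood of $u(\dot\Sigma)$ in $W^\infty$ foliated by images of curves in $\mM^\cc_0$; intersecting this with the open set $W^\infty\setminus S$ yields a neighborhood of $p$ contained in~$C$.

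The heart of the matter is \emph{closedness} of $C$ in $W^\infty\setminus S$. So let $p_k\in C$ with $p_k\to p\in W^\infty\setminus S$, choose $u_k\in\mM^\cc_0$ with $p_k\in u_k(\dot\Sigma)$, and apply Theorem~\ref{thm:compactness}: by the standing hypothesis all curves in $\mM^\cc_0$ have images in $W_0\cup E_+$, so after passing to a subsequence one of the three listed alternatives occurs. Alternative~(2) cannot occur: if the limit building has empty bottom level, then the images $u_k(\dot\Sigma)$ eventually leave every compact subset of $W^\infty$, contradicting $p_k\to p$. Alternative~(3) also cannot occur: there the images $u_k(\dot\Sigma)$ converge, up to the ends, to the image of a two-component nodal curve in $W^\infty$ whose components all lie in $S$, so $p_k$ lies within distance $\varepsilon_k\to 0$ of the closed set $S$, forcing $p\in S$ and contradicting the choice of~$p$. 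Hence alternative~(1) holds and $u_k\to u\in\mM^\cc_0$. Choosing parametrizations that converge in $C^\infty_{\mathrm{loc}}$ and in $C^0$ up to the punctures and writing $p_k=u_k(\zeta_k)$, a subsequence of $\zeta_k$ converges in the compactified domain; the limit is not a puncture, because near a puncture the $\R$--coordinate of every $u_k$ is uniformly large while $p_k\to p$ has bounded $\R$--coordinate. Thus $\zeta_k\to\zeta\in\dot\Sigma$ and $p=u(\zeta)\in u(\dot\Sigma)\subseteq C$, proving closedness.

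It now follows that $C=W^\infty\setminus S$. Since $C\subseteq\bigcup_{u\in\mM^\cc_0}u(\dot\Sigma)\subseteq W_0\cup E_+$, and since $S\subseteq W_0\cup E_+$ as well (each index~$0$ limit curve is a pointwise limit of curves with images in the closed set $W_0\cup E_+$), we obtain $W^\infty=(W_0\cup E_+)\cup S=W_0\cup E_+$; equivalently, the open set $W^\infty\setminus(W_0\cup E_+)$ is contained in the nowhere dense set $S$ and hence is empty. Finally $W$ is a closed subset of $W^\infty$, and $W\cap E_+$ is compact (it is a collar of $\partial W=M$), so $W=W\cap(W_0\cup E_+)=W_0\cup(W\cap E_+)$ is a union of two compact sets, hence compact, as claimed. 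The one genuinely delicate point is the closedness step: one must ensure that a point sitting at finite height in $W^\infty$ is never reached by a sequence of curves bubbling off into the symplectization, nor by one collapsing onto the finite exceptional set~$S$; but given the trichotomy of Theorem~\ref{thm:compactness}, each of these is dispatched by a short contradiction, and the rest is soft point-set topology.
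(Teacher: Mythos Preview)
Your proof is correct and follows essentially the same approach as the paper's: both establish that $C$ is nonempty, open, and closed in the connected set $W^\infty\setminus S$, with closedness coming from the trichotomy of Theorem~\ref{thm:compactness} and the observation that limits of type~(2) or~(3) cannot capture a point $p\notin S$ at finite height. You are somewhat more explicit than the paper in handling case~(2), in justifying nonemptiness of $C$, and in the final point-set argument deducing compactness of $W$, but these are elaborations rather than a different strategy.
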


\begin{proof}
We claim that $C$ is a non-empty, open and closed subset of
$W^\infty\setminus S$.  It is clearly non-empty since 
$\mM^\cc_0$ also is, by construction.
Openness is a direct consequence of Theorem~\ref{thm:IFT} part~(1). 
To prove that $C$ is closed, we choose a sequence $(p_n)\subset C$ with 
$p_n\to p^*\in W^\infty\setminus S$. Then by definition,
there exist curves $u_n\in\mM^\cc_0$ with $p_n\in\im(u_n)$.  A subsequence of
$u_n$ converges to a holomorphic building $u^*$, which by Theorem~\ref{thm:compactness}
is either a smooth curve or a nodal curve with one level.
Since $p^*$ is in the image of $u^*$ and $p^*\not\in S$, we conclude
that $u^*\in\mM^\cc_0$ and $p^*\in\im u^* \subset C$. 

Now, since $S$ is a finite union of images of holomorphic curves,
$W^\infty \setminus S$ is connected and it follows from the above claim that
$C = W^\infty \setminus S$.  Since by assumption $C\subset W_0\cup E_+$, we conclude that $W$ is compact.
\end{proof}

In proving Theorem~\ref{thm:compactness}, we will make use of a few concepts
from the intersection theory of punctured holomorphic curves; this theory
is developed in detail in the papers \cites{Siefring:intersection,SiefringWendl}, 
and the last
section of \cite{Wendl:automatic} also contains a summary.
Assume {$v_1, v_2 \in \mM^\cc$}.
Then there is an algebraic intersection number
$$
i(v_1 ; \cc\ |\ v_2 ; \cc) \in \Z
$$
which has the following properties:
\begin{enumerate}
\item $i(v_1 ; \cc \ |\ v_2 ; \cc)$ is unchanged under {continuous}
variations of $v_1$ and $v_2$ in {$\mM^\cc$}.
\item If $v_1$ and $v_2$ are not both covers of the same somewhere injective
curve, then 
$$
i(v_1 ; \cc \ |\ v_2 ; \cc) \ge 0,
$$
and the inequality is strict if they intersect.
\end{enumerate}

Unlike the usual homological intersection theory applied to closed
holomorphic curves, the last statement is \emph{not} an ``if and only if'':
it is possible in general for $v_1$ and $v_2$ to be disjoint even if
$i(v_1 ; \cc \ |\ v_2 ; \cc) > 0$, though this phenomenon is in some sense
non-generic.  The intersection number can also be defined for curves in 
the symplectization $\R \times M$, possibly with both positive and negative 
punctures.  In this case one has invariance under $\R$--translation,
so if $i(v_1 ; \cc \ |\ v_2 ; \cc) = 0$ then the projected images of
$v_1$ and $v_2$ in $M$ never intersect.

\begin{Lemma}
\label{lemma:selfIntersection}
$i(u_0 ; \cc \ |\ u_0 ; \cc) = 0$.
\end{Lemma}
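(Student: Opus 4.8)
The statement separates into the two cases of Assumption~\ref{assumptions}. If $u_0$ is a closed symplectically embedded sphere of self-intersection zero, then it has no punctures, the constraint data $\cc$ is empty, and $i(u_0 ; \cc\ |\ u_0 ; \cc)$ is by definition just the ordinary homological self-intersection $u_0\bullet u_0$, which vanishes by hypothesis; there is nothing further to prove in that case. For the remainder I assume that $u_0$ arises from property~$(\star)$, so that it is an embedded finite energy punctured sphere in $\R\times M$ whose asymptotic orbits are pairwise distinct and simply covered (cf.\ the Remarks following Definition~\ref{def:star}).

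The plan is to apply the generalized adjunction formula for somewhere injective punctured holomorphic curves of \cites{Siefring:intersection,SiefringWendl} (see also the summary in the last section of \cite{Wendl:automatic}). In the constrained setting this formula takes the form
$$
i(u_0 ; \cc\ |\ u_0 ; \cc) = 2\bigl[\delta(u_0) + \delta_\infty(u_0 ; \cc)\bigr] + c_N(u_0 ; \cc),
$$
where $\delta(u_0)\ge 0$ is the algebraic count of double points and critical points of $u_0$, $\delta_\infty(u_0 ; \cc)\ge 0$ is the corresponding asymptotic contribution coming from the ends, and $c_N(u_0 ; \cc)$ is the constrained normal Chern number of \eqref{eqn:cnIndex}--\eqref{eqn:windpi}. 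It therefore suffices to check that each of these three nonnegative quantities vanishes.

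Two of the three are already available: $\delta(u_0)=0$ because $u_0$ is embedded, and $c_N(u_0 ; \cc)=0$ was established in the proof of Theorem~\ref{thm:IFT}, using \eqref{eqn:windpi} together with $\windpi(u_0)=0$ (since $u_M$ is an immersion) and the identity $\alpha^\Phi_-(\gamma_z - c_z)=\wind^\Phi(e_z)$ at every puncture. It remains to see that $\delta_\infty(u_0 ; \cc)=0$: since the asymptotic orbits of $u_0$ are pairwise distinct and simply covered, no two ends of $u_0$ approach a common orbit, so the only way $\delta_\infty$ could be positive would be through the asymptotic winding of a single end exceeding the extremal value permitted by the chosen exponential weight $c_z$ --- and that extremality is precisely what has been arranged, namely by Lemma~\ref{lemma:constraints} at the constrained punctures $z\in\Gamma_C$ (where $e_z$ realizes the winding $\alpha^\Phi_-(\gamma_z - c_z)$) and by Lemma~\ref{lemma:MBintersect} together with the choice $c_z=\epsilon$ with $(0,\epsilon)\cap\sigma(\mathbf{A}_z)=\emptyset$ at the unconstrained punctures $z\in\Gamma_U$ (where this forces $\wind^\Phi(e_z)=\alpha^\Phi_-(\gamma_z - \epsilon)$). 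Hence all three terms vanish and $i(u_0 ; \cc\ |\ u_0 ; \cc)=0$. The only genuinely delicate point is this last verification that $\delta_\infty(u_0 ; \cc)$ receives no positive contribution from any puncture; it is a bookkeeping argument that requires keeping careful track of the trivialization $\Phi$, of the splitting of $\Gamma$ into $\Gamma_C$ and $\Gamma_U$, and of the parity numbers $p(\gamma_z - c_z)$ --- all equal to $1$, so that $\#\Gamma_0(\cc)=0$, as already checked in the proof of Theorem~\ref{thm:IFT}. Everything else is immediate from results in hand.
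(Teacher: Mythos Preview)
Your argument is correct and follows essentially the same route as the paper. The only difference is cosmetic: the paper invokes a \emph{simplified} form of the adjunction formula,
\[
i(u_0 ; \cc \ |\ u_0 ; \cc) = 2\delta(u_0) + c_N(u_0 ; \cc),
\]
valid precisely because the asymptotic orbits of $u_0$ are simply covered and pairwise distinct, and then concludes immediately from $\delta(u_0)=0$ and $c_N(u_0;\cc)=0$. You instead write the full adjunction formula with the asymptotic term $\delta_\infty(u_0;\cc)$ and verify by hand that it vanishes; your justification (distinct simple orbits plus extremal winding at each puncture, already established in Lemmas~\ref{lemma:MBintersect}--\ref{lemma:constraints} and the proof of Theorem~\ref{thm:IFT}) is exactly what underlies the ``simplification'' the paper cites. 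Your separate treatment of the closed sphere case is also fine and slightly more explicit than the paper, which simply lets the same adjunction computation cover both cases.
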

\begin{proof}
Since $u_0$ has only simply covered Reeb orbits and all of them are distinct,
it satisfies the following somewhat simplified version of the adjunction
formula from \cites{Siefring:intersection,SiefringWendl},
\begin{equation}\label{eqn:adjunction}
i(u_0 ; \cc \ |\ u_0 ; \cc) = 2\delta(u_0) + c_N(u_0 ; \cc).
\end{equation}
Here $\delta(u_0)$ is the algebraic count of double points and singularities
of $u_0$ (see \cite{McDuffSalamon:Jhol}), which vanishes since $u_0$ is
embedded.  As we saw in the proof of Theorem~\ref{thm:IFT},
$c_N(u_0 ; \cc)$ also vanishes, so the claim follows.
\end{proof}

\begin{Lemma}\label{lemma:nicelyEmbedded}
If $v \in \mM^\cc_0$ is contained in $[T,\infty) \times M \subset
W^\infty$, then its projection to $M$ is embedded.
\end{Lemma}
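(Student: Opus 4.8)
The plan is to deduce the embeddedness of $v_M$ from the vanishing of the self-intersection number $i(v;\cc\,|\,v;\cc)$, which propagates to $v$ from $u_0$ by the homotopy invariance of the punctured intersection theory of \cites{Siefring:intersection,SiefringWendl}. Since $v$ has image in $[T,\infty)\times M$, I would regard it throughout as a finite energy $J_+$--holomorphic curve in the symplectization $\R\times M$; the intersection-theoretic quantities computed there agree with those in $W^\infty$, as they depend only on the behaviour of $v$ in a neighbourhood of its image.

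First I would record that $v$ is embedded with distinct, simply covered asymptotic orbits (Prop.~\ref{prop:embedded} and its proof), so $\delta(v)=0$ and the simplified adjunction formula \eqref{eqn:adjunction} applies to $v$, giving $i(v;\cc\,|\,v;\cc)=2\delta(v)+c_N(v;\cc)=c_N(v;\cc)$. On the other hand $i(v;\cc\,|\,v;\cc)$ is invariant under deformations within $\mM^\cc$, and $\mM^\cc_0$ is connected, so by Lemma~\ref{lemma:selfIntersection} it equals $i(u_0;\cc\,|\,u_0;\cc)=0$; hence $c_N(v;\cc)=0$. Applying the formula \eqref{eqn:windpi} to $v$, its right-hand side is $\windpi(v)\ge 0$ plus one term $\alpha^\Phi_-(\gamma_z-c_z)-\wind^\Phi(e_z)$ for each puncture $z$, where $e_z$ now denotes the asymptotic eigenfunction of $v$ at $z$. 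Each such term is non-negative: $v$ has only positive punctures, so the eigenvalue of $e_z$ is negative and, by the convergence-rate constraint at constrained punctures and trivially at unconstrained ones (where $c_z=\epsilon>0$), strictly less than $c_z$, whence $\wind^\Phi(e_z)\le\alpha^\Phi_-(\gamma_z-c_z)$ by monotonicity of the winding numbers. Since this sum of non-negative terms equals $c_N(v;\cc)=0$, each vanishes: $\windpi(v)=0$, so $v_M$ is an immersion, and $\wind^\Phi(e_z)=\alpha^\Phi_-(\gamma_z-c_z)$ at every puncture, which in the terminology of \cites{Siefring:intersection,SiefringWendl} says that $v$ has no ``hidden'' asymptotic intersection with itself.

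It remains to promote the immersion $v_M$ to an embedding. If $v_M(p)=v_M(q)$ for distinct $p,q\in\dot\Sigma$, then writing $v=(v_\R,v_M)$ either $v_\R(p)=v_\R(q)$, in which case $v(p)=v(q)$ contradicts embeddedness of $v$, or else $v$ meets its $\R$--translate $v^{(\tau)}$ with $\tau=v_\R(q)-v_\R(p)\ne 0$. Now $v$ is not a trivial cylinder (all of its punctures are positive, and they lie at distinct orbits), so $v$ and $v^{(\tau)}$ are distinct somewhere injective curves, neither a cover of the other; were they to intersect, positivity of intersections would give $i(v;\cc\,|\,v^{(\tau)};\cc)>0$, whereas $\R$--invariance of the intersection number forces $i(v;\cc\,|\,v^{(\tau)};\cc)=i(v;\cc\,|\,v;\cc)=0$, a contradiction. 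Hence $v_M$ is injective, and together with the asymptotic formula \eqref{eqn:asymptoticformula}, the distinctness of the asymptotic orbits, and the absence of hidden asymptotic self-intersections established above, it follows that $v_M$ is a proper embedding.

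The step I expect to be the main obstacle is the analysis at infinity: the interior injectivity argument is a routine application of positivity of intersections, but excluding collisions of distinct ends of $v_M$ and self-approaches of a single end near its asymptotic orbit relies essentially on the ``relative at infinity'' refinement of the punctured intersection theory of \cites{Siefring:intersection,SiefringWendl}, encoded here in the vanishing of the terms $\alpha^\Phi_-(\gamma_z-c_z)-\wind^\Phi(e_z)$ rather than merely in $c_N(v;\cc)=0$.
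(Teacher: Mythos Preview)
Your argument is correct and follows essentially the same route as the paper: deduce $i(v;\cc\,|\,v;\cc)=0$ and $c_N(v;\cc)=0$ by homotopy invariance from $u_0$, use \eqref{eqn:windpi} to conclude $\windpi(v)=0$ so $v_M$ is immersed, and then rule out self-intersections of $v_M$ via the $\R$--invariance of the intersection number applied to $v$ and its translates. The paper's proof is terser and simply stops at ``$v_M$ is injective,'' whereas you additionally spell out why the asymptotic winding terms in \eqref{eqn:windpi} are individually nonnegative and worry explicitly about embeddedness near infinity; this extra care is justified but not a genuinely different approach.
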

\begin{proof}
Write $v = (v_\R,v_M) : \dot{\Sigma} \to [T,\infty) \times M$.
By assumption, $v$ can be deformed continuously to $u_0$ through
$\mM^\cc$, thus $i(v ; \cc \ |\ v ; \cc) = i(u_0 ; \cc \ |\ u_0 ; \cc) = 0$
by the previous lemma, and $c_N(v ; \cc) = c_N(u_0 ; \cc) = 0$.
Now \eqref{eqn:windpi} implies that $\windpi(v) = 0$, thus $v_M$ is immersed,
and the vanishing self-intersection number implies that $v$ has no
intersections with any of its $\R$--translations, so $v_M$ is also injective.
\end{proof}

\begin{proof}[Proof of Theorem~\ref{thm:compactness}]
By \cite{SFTcompactness}, $u_k$ has a subsequence converging to some
holomorphic building, which we'll denote by~$u$.  Our {first} task is to
show that unless $u$ is {a} $2$--level building {with empty bottom
level} {as} described in case~(2),
it can have no nontrivial upper levels.  This is already clear in the case
where $u$ is closed, as convexity prevents $u_k$ from venturing
into the region $[T,\infty) \times M$ at all.  Let us therefore assume
that $u_k$ has punctures and that $u$ has nontrivial upper levels.
If no component in these upper levels has any negative punctures,
then there must be only one nontrivial level, which consists of one or more
connected components $v_1,\ldots,v_N$ attached to each other by nodes.
All of these components have punctures, since the symplectic form in
$\R\times M$ is exact; moreover, the positive ends of each $v_i$ correspond
to some subset of the positive ends of $u_0$, and since these are all simply
covered and distinct, each $v_i$ is somewhere injective and satisfies
the asymptotic constraints defined by~$\cc$.  Now
\eqref{eqn:cnIndex} and \eqref{eqn:windpi} give
$$
0 \le 2\windpi(v_i) \le 2 c_N(v_i ; \cc) = \ind(v_i ; \cc) - 2,
$$
hence $\ind(v_i ; \cc) \ge 2$.  Since $\ind(u_0 ; \cc) = 2$ as well,
we conclude that $u$ can have at most one connected component, with no nodes,
i.e.~it is a smooth $J_+$--holomorphic curve in $\R\times M$ with only
positive punctures.  Up
to $\R$--translation, $u$ can therefore be identified with some smooth curve
in $\mM^\cc_0$ whose image is contained in $[T,\infty) \times M$, and
the projection into
$M$ is embedded due to Lemma~\ref{lemma:nicelyEmbedded}.  It follows that
this curve satisfies the conditions of property~$(\star)$.

{Alternatively, suppose $u$ has nontrivial upper levels and the top
level} contains a $J_+$--holomorphic curve $u_+$ in $\R \times M$
which is not the trivial cylinder over an orbit and has both positive and
negative punctures.  {Repeating the above argument about behavior at
the positive ends, $u_+$ is somewhere injective.}
Applying Stokes' theorem to $\int u_+^*d\lambda \ge 0$,
the negative asymptotic 
orbits of $u_+$ have total period bounded by the total period of the
positive orbits, implying that all of the negative orbits
belong to the same Morse-Bott manifolds as the orbits of $u_0$.
We claim that {after some $\R$--translation, $u_+$ intersects $u_0$.}
This will imply a contradiction almost immediately, as positivity
of intersections then gives an intersection of $u_k$ with some
$\R$--translation of $u_0$ for sufficiently large $k$,
contradicting Lemma~\ref{lemma:selfIntersection} since
$i(u_k ; \cc \ |\ u_0 ; \cc) = i(u_0 ; \cc \ |\ u_0 ; \cc) = 0$.

To prove the claim, it suffices to show that the projected
images of $u_+$ and $u_0$ in $M$ intersect each other.
Suppose $\gamma$ is an
asymptotic orbit of $u_0$ that lies in the same Morse-Bott submanifold
$N \subset M$
as one of the negative asymptotic orbits $\gamma'$ of $u_+$.
Denote the corresponding asymptotic eigenfunctions by $e$ and $e'$
respectively.  We consider the following cases:

\textbf{Case~1: $N$ is a circle.}
Then $\gamma$ is nondegenerate and $\gamma'$ is the $k$--fold cover of
$\gamma$ for some $k \in \N$.  Choose a trivialization $\Phi$ along
$\gamma$ so that $\wind^\Phi(e) = 0$.  By Lemma~\ref{lemma:constraints},
$\mathbf{A}_\gamma$ has two eigenvalues (counting multiplicity) $\mu < 0$
with $\wind^\Phi(\mu) = 0$.  Then the $k$--fold covers of their eigenfunctions 
are eigenfunctions of $\mathbf{A}_{\gamma'}$ with negative eigenvalues
and zero winding, implying that every positive eigenvalue of
$\mathbf{A}_{\gamma'}$ has strictly positive winding.  Thus
$\wind^\Phi(e') \ge \alpha^\Phi_+(\gamma') > 0$, forcing the projections
of $u_0$ and $u_+$ in $M$ to intersect each other near~$N$.

\textbf{Case~2: $N$ is a torus disjoint from $u_M$.}
Now $\gamma'$ can be deformed through a $1$--parameter family of orbits to
a $k$--fold cover of $\gamma$ for some $k \in \N$.  Choose a trivialization
$\Phi$ along every simply covered orbit in $N$ so that sections in the
$0$--eigenspaces have zero winding.
By Lemma~\ref{lemma:MBintersect}, $\mathbf{A}_\gamma$ has an eigenvalue
$\mu < 0$ such that $\wind^\Phi(e) = \wind^\Phi(\mu) = 0$,
and taking $k$--fold covers of eigenfunctions, we similarly find
eigenfunctions of $\mathbf{A}_{\gamma'}$ that have zero winding and
eigenvalues $k\mu < 0$ and $0$.  This implies that
$\wind^\Phi(e') \ge \alpha^\Phi_+(\gamma') > 0$, which forces the projection 
of $u_+$ in $M$ to intersect $N$, i.e.~$u_+$ intersects a trivial cylinder
$\R \times \gamma_1$ for some orbit $\gamma_1 \subset N$.
Then by the homotopy invariance of the intersection number, $u_+$ also
intersects $\R \times \gamma$.  This intersection is transverse unless
it occurs at a point where $\pi_\lambda \circ Tu_+ = 0$, but the similarity
principle implies that there are finitely many such points 
(see \cite{HWZ:props2}).  Thus if necessary we can use Theorem~\ref{thm:IFT}
to perturb $u_0$ and thus move $\gamma$ to a nearby orbit, so that the
intersection of $\R\times \gamma$ with $u_+$ is transverse.  This implies
a transverse intersection of the projected image of $u_+$ in $M$ with $\gamma$,
and therefore an intersection of the projections of $u_+$ and $u_0$ nearby.

\textbf{Case~3: $N$ is a Morse-Bott manifold intersecting $u_M$.}
The argument is similar to case~2, only now we use the intersection of
$u_M$ with $N$ to show that $u_M$ intersects $\gamma'$ and thus also
the projected image of $u_+$ near~$\gamma'$.

We've shown now that $u$ cannot have any nontrivial upper level
{except in case~(2)}, so
it must therefore be a $1$--level building in $W^\infty$, i.e.~a nodal
$J$--holomorphic curve.  The deduction of case~(3) now proceeds almost exactly
as in the proof of \cite{Wendl:fillable}*{Theorem~7}.  To summarize,
the connected components of $u$ are all either punctured curves with
positive ends at distinct simply covered orbits (and thus somewhere injective),
or closed curves (which must be nonconstant by an
index argument).  The latter could in general be multiple covers, but if
$v$ is a $k$--fold branched cover of some closed somewhere injective curve
$v_0$, then we find $\ind(v) = k \cdot \ind(v_0) + 2 (k - 1)$.
Due to our genericity assumption, all somewhere injective curves have
index at least~$0$, so we find that the total index of $u$ becomes more
than~$2$ unless there is at most one node connecting two components,
and in this case both components must be somewhere injective.
The adjunction
formula \eqref{eqn:adjunction} can now be used to
show that these two components, $v_1$ and $v_2$, are both embedded,
satisfy $i(v_i ; \cc\ |\ v_i ; \cc) = -1$, $i(v_i ; \cc\ |\ u_0 ; \cc) = 0$ 
and $i(v_1 ; \cc \ |\ v_2 ; \cc) = 1$;
moreover, they are both Fredholm regular and have (constrained) index~$0$.

There's one minor point to address which was irrelevant in
\cite{Wendl:fillable}: if there are no punctures, we haven't ruled out the
possibility that $u$ is a smooth multiple cover,
i.e.~$u = v \circ \varphi$ for some closed somewhere injective sphere
$v$ and holomorphic branched cover $\varphi : S^2 \to S^2$.  Since
$c_1(u^*TW^\infty) = 2$, this is allowed numerically only if
$c_1(v^*TW^\infty) = 1$ and $\varphi$ has degree~$2$.
But then we get a simple contradiction using the
adjunction formula: since $u \bullet u = 0$, the same holds for $v$, thus
$$
0 = v \bullet v = 2\delta(v) + c_1(v^*TW^\infty) - 2 = 2\delta(v) - 1
$$
where $\delta(v)$ is the algebraic count of double points and singularities.
The right hand side is odd; in particular it can never be zero.

It remains to show that the set of all index~$0$ curves arising from nodal
degenerations of $u_k$ is finite.
Indeed, suppose $v_k$ is a sequence of finite energy
$J$--holomorphic curves in $W^\infty$ with uniform energy and $C^0$--bounds
such that
\begin{enumerate}
\item
The punctures of $v_k$ are identified with a subset of $\Gamma$ and
satisfy the asymptotic constraints of Definition~\ref{def:constrained}.
\item
$i(v_k ; \cc \ |\ u_0 ; \cc) = 0$.
\item
$\ind(v_k ; \cc) = 0$.
\end{enumerate}
Then we claim that $v_k$ has a convergent subsequence.  The argument is
familiar: we rule out nontrivial upper levels exactly as before by
showing that any nontrivial component $v_+$ in such a level must intersect
$u_0$.  Thus the only remaining possible non-smooth
limit is a nodal curve in $W^\infty$, but the same index argument 
now implies that there is at
most one component, thus no nodes, and the limit is somewhere injective.  
It follows that this set of curves is
a compact smooth $0$--dimensional manifold, i.e.~a finite set.
\end{proof}

\section{Proofs of the main results}
\label{sec:proofs}

\subsection{Proofs of Theorems \ref{thm:rationalRuled} and \ref{thm:separating}}

We consider a closed and connected symplectic $4$--manifold $(W,\om)$ which contains a closed contact type hypersurface $M$
such that $W \setminus M$ is connected.  Under the assumptions of
Theorem~\ref{thm:rationalRuled} or~\ref{thm:separating}, we will construct
from this a noncompact symplectic manifold with convex boundary to which
Corollary~\ref{cor:foliation} applies, giving a contradiction.
The general idea of the construction is outlined in 
Figure~\ref{fig:infinitechain}.

To start with, we compactify $W\setminus M$ by adding to each end a copy of $M$,
obtaining a compact and connected symplectic manifold $(W_1,\om)$ with one convex 
boundary component $M^+$ and an identical concave boundary component $M^-$. 
Inductively, we define the compact symplectic 
manifold $W_n$ by $W_n:=W_{n-1}\cup_{M^-=M^+}W_1$, denoting the symplectic 
form on $W_n$ again by $\om$. Note that $W_{n-1}$ is a compact symplectic submanifold of $W_n$ in a natural way. Thus the set
\beq
(\mathcal{W},\om):=\bigcup_{n\geq1}(W_n,\om)
\eeq
is a noncompact symplectic manifold with convex boundary $M$ corresponding to the convex boundary of $W_1$.

Assume that $W$ contains a symplectically embedded sphere $S \subset W$
with $S \bullet S = N \ge 0$. {Since $\omega$ is exact on $M$, Stokes'
theorem implies that $S$ cannot be contained entirely in~$M$.
We can thus blow up $W$} at $N$ distinct points
in $S$ {that are} not in $M$, {modifying} both $W$ and $S$ so that 
$S \bullet S = 0$ without loss of generality.  Now we claim that $S$
can be ``lifted'' to a symplectic sphere $\widetilde{S}$ in 
$(\W,\om)$ with $\widetilde{S} \bullet \widetilde{S} = 0$.  To see this, construct
a symplectic infinite cover $(\widetilde{W},\widetilde{\omega})$ of $(W,\omega)$ by 
gluing together a sequence of copies $\{ (W_1^j,\omega) \}_{j \in \Z}$ 
of $(W_1,\omega)$, with the concave boundary of $W_1^j$ attached to the
convex boundary of $W_1^{j+1}$ for each $j \in \Z$.  Since the sphere is
simply connected, $S$ has a lift $\widetilde{S} \subset \widetilde{W}$,
and moreover, $(\widetilde{W},\widetilde{\omega})$ naturally contains
$(\W,\om)$, which we may assume contains $\widetilde{S}$ without loss of 
generality.  

Similarly, if $M$ with its induced contact structure satisfies weak property 
$(\star)$, then after
attaching a symplectic cobordism to the convex boundary of $(\W,\om)$,
we may assume without loss of generality that {either $(\W,\om)$ contains
a symplectic sphere of zero self-intersection (after blowing up) or
property~$(\star)$ holds for $\p\W$.}

In either case, $(\W,\om)$ now satisfies Assumption~\ref{assumptions}.
As explained in \S\ref{sec:compactness}, we can then attach to $\p\W$
a cylindrical end $E^+$ that contains $([T,\infty) \times M, d(e^t\lambda))$
for sufficiently large $T \in \R$ and a suitable contact form $\lambda$,
obtaining an enlarged symplectic manifold $(\W^\infty,\om)$,
with an $\om$--compatible almost complex structure $J_0$ that is 
admissible and $\R$--invariant on $[T,\infty) \times M$, and a 
non-empty moduli space $\mM^\cc_0 \subset \mM^\cc$ of $J_0$--holomorphic curves in $\W^\infty$.
{Moreover for some $n_0 \in \N$, we can assume that $J_0$ belongs
to the following set.}

\begin{Def}\label{def:periodic_acs}
Let $\J^\mathrm{per}$ be the space of compatible almost complex structures 
on $(\W^\infty,\om)$ which match $J_0$
on $([T,\infty) \times M, d(e^t\lambda))$ and whose restrictions
to $W\cong W_{n+1}\setminus W_n\subset \W^\infty$ are independent of $n$ for ${n\geq n_0(J_0)}$. 
Such a $J$ will be called periodic.
\end{Def}

\begin{Lemma}\label{lemma:genericity}
For a generic $J\in\J^\mathrm{per}$, all $J$--holomorphic curves in 
$\mM^\cc$ are Fredholm regular.
\end{Lemma}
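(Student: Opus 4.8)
The plan is to carry out the standard Sard--Smale transversality argument, adapted to the restricted perturbation class $\J^\mathrm{per}$. First I would introduce the universal moduli space
\[
\mM^{\mathrm{univ}}=\{(u,J): J\in\J^\mathrm{per} \text{ and } u\in\mM^\cc \text{ is } J\text{-holomorphic}\},
\]
built from the exponentially weighted Sobolev completions encoding the asymptotic constraints $c_z$, exactly as in \cites{Wendl:automatic,Wendl:BP1,HWZ:props3,Wendl:compactnessRinvt}. Since smooth periodic almost complex structures do not form a Banach space, I would do this over $C^\ell$--completions of $\J^\mathrm{per}$ (or a Floer--type Banach space of perturbations) and recover the smooth statement by the usual countable intersection over an exhaustion of $\mM^\cc$ by subsets of bounded energy, on each of which genus and index are bounded for somewhere injective curves via the adjunction inequality; this reduction is routine and I will suppress it. The only genuinely nonstandard feature is that a perturbation of $J\in\J^\mathrm{per}$ supported near a point of $W_{n+1}\setminus W_n$ with $n\geq n_0$ is forced to be repeated identically in every later copy of $W$, so perturbations are not localizable in the periodic tail of $\W^\infty$.

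The heart of the matter is to show, for each $(u,J)\in\mM^{\mathrm{univ}}$, that the linearization of $(u,J)\mapsto\dbar_J u$ is surjective, equivalently that any element $\zeta$ of the finite--dimensional cokernel of $D_u$ which pairs to zero against $Y(u)\circ Tu\circ j$ for every admissible $Y$ must vanish. Write $F=\W^\infty\setminus\bigl(([T,\infty)\times M)\cup\bigcup_{n\geq n_0}(W_{n+1}\setminus W_n)\bigr)$; this is a nonempty open region --- it contains the collar $\{f\leq t<T\}$ of the cylindrical end and the ``head'' $W_{n_0}$ --- on which every $J\in\J^\mathrm{per}$ is \emph{unconstrained}, and which separates $[T,\infty)\times M$ from the periodic tail. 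I would distinguish three cases. If $u$ is wholly contained in $[T,\infty)\times M$, then $u$ is already Fredholm regular by the automatic--transversality mechanism used in the proof of Theorem~\ref{thm:IFT} (see \cite{Wendl:automatic}), independently of any genericity. If $u$ has a puncture but is not contained in $[T,\infty)\times M$, then since its image is connected, reaches $[T,\infty)\times M$ near that puncture, and also meets the complement of that region, while $[T,\infty)\times M$ and the periodic tail are disjoint, the image of $u$ must meet $F$; one then argues as usual, picking an injective point $z_0$ of $u$ with $u(z_0)\in F$ and $\zeta(z_0)\neq 0$ --- such $z_0$ exists because $u$ is somewhere injective and, by unique continuation for the formal adjoint equation, $\zeta$ is nowhere vanishing on a dense set --- and using a bump perturbation supported in a small ball about $u(z_0)$, which is admissible precisely because $u(z_0)\in F$, to make the pairing nonzero. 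The remaining, delicate case is a closed curve $u$ lying in the periodic tail and spanning finitely many consecutive copies $W_{m+1}\setminus W_m,\dots,W_{m+s+1}\setminus W_{m+s}$ with $m\geq n_0$: here a bump perturbation near an injective point $z_0$ in one copy is repeated at the corresponding points of every copy $u$ meets, and a priori these contributions could cancel. To avoid this one chooses $z_0$ generically so that the periodic translates of $u(z_0)$ avoid $\im u$; the set of $z_0$ for which this fails is nowhere dense, because otherwise $u$ would agree with its periodic shift on an open set, hence (by unique continuation) on a whole component, which a \emph{closed} curve cannot, since its pieces in the two extremal copies meet the necks asymmetrically. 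With such $z_0$, the pairing is again nonzero, so $\zeta=0$.

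It then follows that $\mM^{\mathrm{univ}}$ is a smooth Banach manifold, the projection $\pi:\mM^{\mathrm{univ}}\to\J^\mathrm{per}$ is Fredholm with $\ker D\pi(u,J)\cong\ker D_u$, $\operatorname{coker}D\pi(u,J)\cong\operatorname{coker}D_u$ and index $\ind(u ; \cc)$, and $\mM^{\mathrm{univ}}$ is second countable; by Sard--Smale the set of regular values $J\in\J^\mathrm{per}$ of $\pi$ is comeager, and $J$ is a regular value precisely when every $J$--holomorphic curve in $\mM^\cc$ is Fredholm regular. Undoing the $C^\ell$--to--$C^\infty$ reduction over the bounded--energy exhaustion produces the desired comeager set of periodic $J$.

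I expect the real obstacle to be the last case: controlling the periodically constrained perturbations for curves confined to the periodic tail that span several fundamental domains, i.e.\ showing one can always either invoke automatic transversality or produce an admissible perturbation detecting the cokernel element. This is precisely the point flagged as delicate; everything else is a routine adaptation of the standard transversality package (cf.~\cite{McDuffSalamon:Jhol}) to the weighted functional--analytic framework already set up in \S\ref{sec:compactness}.
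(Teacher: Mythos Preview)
Your proposal follows essentially the same strategy as the paper: reduce to showing the universal moduli space is a Banach manifold, with the only nonstandard issue being curves confined to the periodic tail where perturbations are forced to repeat. The paper packages that case a bit more cleanly than you do: rather than arguing directly with translates and invoking ``necks meeting asymmetrically,'' it observes that since $J$ is periodic, the projection $\pi\circ u$ to the quotient $W$ (via the covering $\widetilde W\to W$) is itself $J$--holomorphic, and it suffices to show $\pi\circ u$ is somewhere injective---injective points of $\pi\circ u$ are exactly the non-periodic injective points of~$u$. If $\pi\circ u$ were multiply covered, then since $u$ is somewhere injective, $u$ would be equivalent to $\tau^k\circ u$ for some $k\neq 0$, hence to $\tau^{nk}\circ u$ for all $n\in\Z$, forcing $\im u$ to be unbounded---impossible for a closed curve. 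This is the same contradiction your ``extremal copies'' reasoning reaches, but the covering-space formulation sidesteps the pigeonhole-over-finitely-many-shifts step (passing from ``some translate hits $\im u$ for each $z_0$ in an open set'' to ``a \emph{fixed} translate does'') that you gloss over.

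One small point: your case~(i) invokes the automatic transversality of Theorem~\ref{thm:IFT} for curves wholly contained in $[T,\infty)\times M$, but that theorem only treats $\mM^\cc_0$, not all of $\mM^\cc$. The paper's proof simply does not address this case at all, and the application (see the enumerated hypotheses just before Theorem~\ref{thm:compactness}) only requires regularity for curves \emph{not} contained in the cylindrical end, so this is harmless.
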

\begin{proof}
Recall that the $J$--holomorphic curves in $\mM^\cc$ are somewhere 
injective, see \S\ref{sec:compactness}.  The proof of transversality is a
small variation on the standard technique, as in \cite{McDuffSalamon:Jhol}: 
the key is to show
that the universal moduli space $\{(u,J)\mid u\text{ is $J$--holomorphic}\}$
is a smooth Banach manifold for periodic $J$ and $u$
satisfying the relevant conditions.  This will use the fact that a perturbation 
of $J$ can be localized at an injective point of $u$ without interfering at 
other points in the image of $u$.  Then regular values of the projection 
$(u,J)\mapsto J$ are generic by the Sard-Smale theorem, and for these,
all $J$--curves are Fredholm regular.

Assume $J \in \J^\mathrm{per}$ and $u \in \mM^\cc$ is not fully contained in
$[T,\infty) \times M$.  If $u$ also intersects $W_{n_0} \cup E^+$,
then it suffices to perturb $J$ only in this region and thus preserve
periodicity of $J$.  Thus it remains only to show that $\J^\mathrm{per}$ permits
sufficient perturbations of $J$ when the image of $u$ is contained in
$\W \setminus W_{n_0}$, in which case $u$ must be a somewhere injective
closed curve.
Since $J$ is required to be periodic, the only danger not present in the
standard case is that $u$ may have \emph{periodic points}, in the following 
sense.  Recall that $\W^\infty$ contains infinitely many identical copies 
of a certain manifold $V$, in the form 
$\widehat{W}_n := W_{n+1}\setminus W_n$. Thus each point $x\in V$ appears 
infinitely often in $\W^\infty$, and we call these different points 
translates of $x$.  Then $z\in\dot{\Sigma}$ is a periodic point of $u$ if 
a translate of $u(z)$ is contained in the image $\im(u)$ of $u$. 
In this case a periodic perturbation {of $J$} cannot be localized in the image of $u$.

We claim that for any somewhere injective closed holomorphic curve in
$\W \setminus W_{n_0}$, the set of injective points which are not periodic 
is open and dense.  To see this, we can consider
the covering space $\pi : \widetilde{W} \to W$ 
which was constructed above Definition~\ref{def:periodic_acs}.
Since $J$ is periodic, the projection $\pi \circ u$ is a
holomorphic curve in $W$. It will suffice to show that also $\pi \circ u$
is somewhere injective, since then the set of injective points of $\pi\circ u$ is open and dense,
{and injective points} of $\pi\circ u$ give rise to non-periodic injective points of $u$.  Denote by 
$\tau : \widetilde{W} \to \widetilde{W}$ the deck transformation
that maps $\widetilde{W}_n$ to $\widetilde{W}_{n+1}$.  Then if $\pi \circ u$
is multiply covered, the fact that $u$ is somewhere injective implies
(using unique continuation)
that $u$ and $\tau^k \circ u$ are equivalent curves for some integer
$k \ne 0$.  But then $u$ is also equivalent to $\tau^{nk} \circ u$ for any
$n \in \Z$, implying that the image of $u$ in $\widetilde{W}$ is unbounded.
Since $u$ was assumed to be closed, this is a contradiction and shows that
$\pi \circ u$ is indeed somewhere injective.

With this, the usual proof that the universal 
moduli space is a smooth Banach manifold goes through unchanged. 
\end{proof}

For the remainder of this section we assume that the almost complex 
structure $J$ {(formerly called $J_0$)} is periodic {and generic}.

\begin{Prop}\label{prop:uniform_compactness}
There exists $N_0\in\N$ such that for all $u\in\mM^*_0$ we have
\beq
\im(u)\subset W_{N_0} \cup E^+.
\eeq
\end{Prop}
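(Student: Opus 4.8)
The proof splits along the two cases of Assumption~\ref{assumptions}; in both, the engine is the monotonicity lemma for $J$--holomorphic curves, which is uniform because $J$ is periodic, and in the closed case one additionally uses the connectedness of $\mM^*_0$ together with the maximum principle at the contact hypersurfaces separating the links of the chain. First record the ingredients. Since $\mM^*_0$ is connected, all of its curves represent one and the same relative homology class, so by the discussion of energy in \S\ref{sec:compactness} there is $E_0<\infty$ with $E(u)\le E_0$ for all $u\in\mM^*_0$. Since $J\in\J^\mathrm{per}$, the metric $g_J=\om(\cdot,J\cdot)$ is invariant under the deck transformation of the chain on $\W\setminus W_{n_0}$, so the chain part $\W$ has bounded geometry and the monotonicity lemma provides constants $r_0\in(0,1)$ and $\hbar>0$, independent of the curve and the point, with $\int_{u^{-1}(B_{r_0}(p))}u^*\om\ge\hbar$ for every proper finite--energy $J$--holomorphic $u$ and every $p\in\im u$ with $B_{r_0}(p)\subset\W$. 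Finally fix a smooth $h:\W^\infty\to[0,\infty)$ with $|dh|\le1$, $h\equiv0$ on $W_1\cup E^+$, and $n-1\le h\le n+1$ on $\widehat W_n$ for every $n$.

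Assume first that $u_0$ has punctures, i.e.\ we are in the setting of property~$(\star)$. The number and signs of the punctures are locally constant on $\mM^*$, and finite energy together with monotonicity forbids a puncture escaping down the chain, so every $u\in\mM^*_0$ has exactly $N=\#\Gamma$ punctures, all positive and asymptotic to Reeb orbits in $E^+$; in particular $\im u$ meets $E^+$. Since $\im u$ is connected and $h$ is continuous, $h(\im u)\supseteq[0,\sup_{\im u}h]$, so we may choose $p^{(i)}\in\im u$ with $h(p^{(i)})=2i$ for $i=1,\dots,\lfloor\tfrac12\sup_{\im u}h\rfloor$. Because $|dh|\le1$ these lie pairwise at distance $\ge2>2r_0$, so the balls $B_{r_0}(p^{(i)})$ are disjoint and lie in the interior of $\W$; applying monotonicity to each and summing, $\lfloor\tfrac12\sup_{\im u}h\rfloor\,\hbar\le E_0$, hence $\sup_{\im u}h\le2E_0/\hbar+2$. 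This is a uniform bound on the depth reached by the curves of $\mM^*_0$, which is exactly the assertion.

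Assume now that $u_0$ is closed; local constancy of the puncture configuration then shows that every $u\in\mM^*_0=\mM^\cc_0$ is a closed $J$--holomorphic sphere. The maximum principle prevents any such curve from entering $E^+$, so each lies in $\W$, and the same ball--packing estimate, now on the interval $h(\im u)=[\inf_{\im u}h,\sup_{\im u}h]$, yields a uniform \emph{depth--range} bound $\sup_{\im u}h-\inf_{\im u}h\le R_0:=2E_0/\hbar+2$. Suppose the proposition fails; then some $u^*\in\mM^*_0$ has $\sup_{\im u^*}h$ arbitrarily large, so by the depth--range bound $\inf_{\im u^*}h$ is large too, whence $\im u^*\subset\W\setminus W_j$ for some $j\ge1$: $u^*$ lies strictly on the deep side of the separating contact hypersurface $M_{(j)}\subset\W^\infty$ (the copy of $M$ that cuts $W_j$ off from the rest of the chain). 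Consider $\mathcal F_j\subset\mM^*_0$, the curves whose image avoids $M_{(j)}$ and lies in the deep component of $\W^\infty\setminus M_{(j)}$. It is non--empty ($u^*\in\mathcal F_j$) and open. It is also closed: if curves in $\mathcal F_j$ converge in $\mM^*_0$ to $u$, then $\im u$ lies in the closure of the deep component, and if $u$ met $M_{(j)}$ it would be a non--constant closed $J$--holomorphic sphere touching the $J$--convex boundary of that component from inside, contradicting the maximum principle. Hence $\mathcal F_j=\mM^*_0$; but $u_0$, whose image lies in $W_1\subset W_j$, is on the shallow side, so $u_0\notin\mathcal F_j$ --- a contradiction. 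Therefore $\sup_{\im u}h$ is uniformly bounded on $\mM^*_0$, which again is the claim.

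The point I expect to demand the most care is the maximum--principle step at the interior necks $M_{(j)}$ in the closed case. One needs each $M_{(j)}$ to be $J$--convex for the periodic $J$; concretely this means arranging $J$ adapted near every $M_{(j)}$ --- writing $\om=d(e^t\lambda)$ on a collar and using subharmonicity of $t\circ u$ --- a mild extra requirement to incorporate into $\J^\mathrm{per}$. One must also allow sequences in $\mathcal F_j$ to leave $\mM^*_0$ with a nodal or broken limit: the ``upper level in $\R\times M$'' alternative of Theorem~\ref{thm:compactness} is excluded because the curves stay far from $E^+$, and the maximum principle applied to each component of a nodal limit keeps it on the deep side, so closedness of $\mathcal F_j$ survives. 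A lesser point is simply to confirm that the monotonicity constants $r_0,\hbar$ can be chosen uniformly over the whole (noncompact but periodic) chain $\W$, and that monotonicity is being invoked in the form valid for curves meeting $\p B_{r_0}(p)$ --- the relevant case, since the curves here are far larger than a single $r_0$--ball.
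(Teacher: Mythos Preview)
Your proof is correct and takes essentially the same approach as the paper: monotonicity combined with periodicity of $J$ gives an energy lower bound growing linearly with the depth reached in the chain, and in the closed case a maximum-principle argument at one of the interior contact hypersurfaces anchors every curve in $\mM^*_0$ to the region containing~$u_0$. Your packaging via a height function $h$ and an explicit open--closed argument for $\mathcal F_j$ is slightly more detailed than the paper's version, and the concern you flag about $J$--convexity at the interior necks $M_{(j)}$ is legitimate---the paper invokes ``convexity'' at $M^-_1$ in exactly the same way and likewise leaves implicit that $J\in\J^{\mathrm{per}}$ should be taken adapted near these hypersurfaces.
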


\begin{proof}
We denote the convex boundary of $W_n\subset\W$ by $M^+$ and the  concave boundary by $M^-_n$. Recall that $M^+$ is the same for all $W_n$. Then we claim that there exists a positive constant $c_0>0$  such that all $u\in\mM^*_0$ with $\im(u)\cap M^+$ and $\im(u) \cap M^-_n$ both nonempty have energy
\beq\label{eqn:energy_grows}
E(u)\geq c_0n\;.
\eeq
This follows from the monotonicity lemma (see Lemma~\ref{lemma:monotonicity} below) and the fact that the almost complex structure is periodic. Indeed, we fix a copy of $W_1$ in $W_n$ and denote 
for the moment its convex  and concave boundary by $\p W^+$ and $\p W^-$ respectively. We claim that there exists $\tilde{c}>0$ such that any holomorphic curve $v$ with $v^{-1}(\p W^+) \neq\emptyset$ and $v^{-1}(\p W^-)\neq\emptyset$ has at least energy $E(v)\geq\tilde{c}$. To see this we observe that each such $v$ has to pass through a point in $W$ with distance $\epsilon_0>0$ to the boundary $\p W^+\cup\p W^-$ of $W$. Thus we conclude from Lemma~\ref{lemma:monotonicity} that $E(v)\geq C\epsilon_0^2$ for each $v$, where $C$ and $\epsilon_0$ only depend on the almost complex structure $J$. Since $J$ is periodic, and a map $u\in\mM^*_0$ with $\im(u)\cap M^+\neq\emptyset$ and $\im(u)\cap M^-_n \neq\emptyset$ passes through the boundaries of $n$ copies of $W_1$, equation \eqref{eqn:energy_grows} follows.  Using the uniform energy bound for $u \in \mM^*_0$, this implies the proposition in the case where $u_0$ has punctures, as every $u \in \mM^*_0$ is then either confined to $E^+$ or passes through~$M^+$.

A small modification is required for the case without punctures: here $u_0 \in \mM^*_0$ is a sphere, and we can choose its lift from $W$ to $\W^\infty$ so that without loss of generality, the image of $u_0$ intersects~$W_1$ (i.e.~the first copy).  Then we claim that \emph{every} $u \in \mM^*_0$ intersects~$W_1$.  Otherwise, the fact that $\mM^*_0$ is connected implies the existence of some holomorphic sphere in $\mM^*_0$ that touches $M^-_1$ tangentially from inside $W_2 \setminus W_1$, and this is impossible by convexity.  We conclude that every $u \in \mM^*_0$ which escapes from $W_1 \cup E^+$ must also pass through $M^-_1$, so the above argument goes through by using $M^-_1$ in place of~$M^+$.
\end{proof}

For the sake of completeness, we include here the monotonicity lemma,
see \cite{Hummel} for a proof.
\begin{Lemma}
\label{lemma:monotonicity}
For any compact almost complex manifold $(W,J)$ with Hermitian metric
$g$, there are constants $\epsilon_0$ and $C > 0$ such that the following
holds.  Assume $(S,j)$ is a compact Riemann surface, possibly with boundary,
and $u : S \to W$ is a pseudoholomorphic curve.  Then for every
$z \in \text{Int}(S)$ and $r \in (0,\epsilon_0)$ such that
$u(\p S) \cap B_r(u(z)) = \emptyset$, the inequality
$$
\operatorname{Area}\left(u(S) \cap B_r(u(z))\right) \ge C r^2
$$
holds.
\end{Lemma}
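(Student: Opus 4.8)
The plan is to prove this standard quantitative area bound by combining Stokes' theorem with a coarea differential inequality for the area of the curve inside a ball of radius $r$. Compactness of $W$ supplies all the uniform constants: fix a finite atlas of geodesic normal coordinate charts of uniformly bounded diameter, so that in each chart the Hermitian metric $g$ is equivalent to the Euclidean metric with ratio $1 + O(|x|^2)$, and — after a fixed linear change normalizing $g$, $J$ and the fundamental $2$--form $\omega(\cdot,\cdot) := g(J\cdot,\cdot)$ at the center $p$ to the standard Euclidean, complex and symplectic structures — we may write $\omega = d\lambda_0 + E$, where $\lambda_0 = \tfrac12\sum_i(x_i\,dy_i - y_i\,dx_i)$ satisfies $|\lambda_0(x)| \le \tfrac12|x|$ and the error $2$--form satisfies $|E(x)| \le C_0|x|$, with $C_0$ depending only on $(W,g,J)$. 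Recall that for a $J$--holomorphic curve $\int u^*\omega$ is the induced area. We take $\epsilon_0$ smaller than the injectivity radius and small enough to control the $O(\cdot)$ terms below, and we may assume $u$ is non-constant, as otherwise the statement is vacuous for its intended use.

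First I would reduce to an embedded model near $z$. Using the local structure of $J$--holomorphic curves near a (possibly critical) interior point, one can find an embedded $J$--holomorphic disk $\Sigma\subset u(S)$ through $p := u(z)$ (near a critical point $u$ factors locally as $w\mapsto\phi(w^k)$ for an embedding $\phi$, and one takes $\Sigma$ to be the image of $\phi$). Set $a(r):=\operatorname{Area}(\Sigma\cap B_r(p))$, so $\operatorname{Area}(u(S)\cap B_r(p))\ge a(r)$ and it suffices to bound $a(r)$ from below. Since $\Sigma$ is embedded through $p$ its tangent plane at $p$ is a $2$--plane, so $a(r)/r^2\to\pi$ as $r\to0$. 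For a.e.\ $r\in(0,\epsilon_0)$ — the regular values of $\rho\circ u$, $\rho:=\operatorname{dist}(\cdot,p)$ — the slice $\Sigma_r:=\Sigma\cap B_r(p)$ is a compact surface with $\partial\Sigma_r\subset\partial B_r(p)$, using $u(\partial S)\cap B_r(p)=\emptyset$. Then Stokes' theorem gives $\int_{\Sigma_r}d\lambda_0 = \int_{\partial\Sigma_r}\lambda_0 \le \tfrac12 r\,\ell(r)(1+O(r))$, where $\ell(r):=\operatorname{length}(\partial\Sigma_r)$ and we use that points of $\partial B_r(p)$ have Euclidean norm exactly $r$ in normal coordinates; since $a(r)=\int_{\Sigma_r}\omega = \int_{\Sigma_r}d\lambda_0 + \int_{\Sigma_r}E$ and $\bigl|\int_{\Sigma_r}E\bigr|\le C_1 r\,a(r)$, this yields $(1-C_1 r)\,a(r)\le \tfrac12 r\,(1+O(r))\,\ell(r)$. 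On the other hand, the coarea formula on $\Sigma$ together with $|\nabla^\Sigma\rho|\le|\nabla^W\rho|=1$ gives $a'(r)\ge\ell(r)$ for a.e.\ $r$. Combining these, $\dfrac{a'(r)}{a(r)}\ge\dfrac{2(1-C_1 r)}{r(1+O(r))}\ge\dfrac{2}{r}-C_2$; since $a$ is monotone and (by coarea) locally absolutely continuous, I may integrate from $\epsilon$ to $r<\epsilon_0$ and let $\epsilon\to0$, using $a(\epsilon)/\epsilon^2\to\pi$, to obtain $a(r)\ge\pi e^{-C_2\epsilon_0}\,r^2$. This gives the lemma with $C:=\pi e^{-C_2\epsilon_0}>0$, depending only on $(W,g,J)$.

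The step I expect to require the most care is the bookkeeping forced by the non-integrability of $J$: the fundamental form $\omega$ is not closed, so one cannot apply Stokes to $\omega$ directly but must peel off a local primitive $d\lambda_0$ and estimate the remainder $E$, and one must then check that the combined perturbative corrections — from $E$, from $g$ versus the Euclidean metric, and from the choice of $\lambda_0$ — assemble into the clean inequality $(\log a)'\ge 2/r - O(1)$ with the factor $2$ (hence the power $r^2$) intact and with constants uniform over $W$. A clean alternative that sidesteps much of this is to observe that a $J$--holomorphic curve has second fundamental form, hence mean curvature, bounded in terms of the Nijenhuis tensor of $J$, and then invoke the classical monotonicity formula for surfaces of bounded mean curvature; see \cite{Hummel} for the argument carried out in detail along these lines.
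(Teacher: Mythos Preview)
The paper gives no proof of this lemma, citing \cite{Hummel} instead; your Stokes-plus-coarea differential inequality is the standard argument, and your treatment of the non-closed fundamental form $\omega = d\lambda_0 + E$ is correct.

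The gap is in your reduction to an embedded disk $\Sigma$. First, the factorization $u(w) = \phi(w^k)$ with $\phi$ an \emph{embedding} is not available in general: the cusp $u(w) = (w^2, w^3)$ in $\C^2$ is already injective (so $k=1$ is forced) yet has a critical point at~$0$, and its image is not a smooth surface there. Second, and more seriously, even when a small embedded disk $\Sigma$ through $p$ does exist, it is a purely local object; nothing guarantees $\partial\Sigma_r \subset \partial B_r(p)$ for all $r$ up to the \emph{uniform} scale $\epsilon_0$, since $\Sigma$ may exhaust itself at some radius depending on $u$ and~$z$, after which $a(r)$ becomes constant and your inequality fails. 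Your appeal to the hypothesis $u(\partial S)\cap B_r(p)=\emptyset$ to justify $\partial\Sigma_r\subset\partial B_r(p)$ is precisely where the local disk $\Sigma$ is being conflated with the global curve. The fix is to drop $\Sigma$ and work with the map itself: set $S_r := u^{-1}(\overline{B_r(p)})$ and $a(r) := \int_{S_r} u^*\omega$; the boundary hypothesis then legitimately gives $u(\partial S_r)\subset\partial B_r(p)$ for regular $r<\epsilon_0$, one still has $\liminf_{r\to 0} a(r)/r^2 \ge \pi$ (the contribution at $z$ alone gives $k\pi$ with $k$ the local order), and the rest of your argument runs verbatim to yield the area-with-multiplicity bound --- which is exactly what the paper's application in Proposition~\ref{prop:uniform_compactness} uses.
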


Since $W_{N_0}$ is compact, Proposition \ref{prop:uniform_compactness} 
allows us to apply Corollary \ref{cor:foliation}.  But this implies that
$\W$ is compact, and is thus a contradiction, concluding the proof of
Theorems~\ref{thm:rationalRuled} and~\ref{thm:separating}.

\begin{figure}[htb]
\psfrag{W}{$(W,\omega)$}
\psfrag{M}{$(M,\xi)$}
\psfrag{Vn}{$(W_n,\omega)$}
\psfrag{VncupEplus}{$W_n\cup E^+$}
\psfrag{Y}{$Y$}
\psfrag{V1}{$(W_1,\omega)$}
\includegraphics[scale=.48]{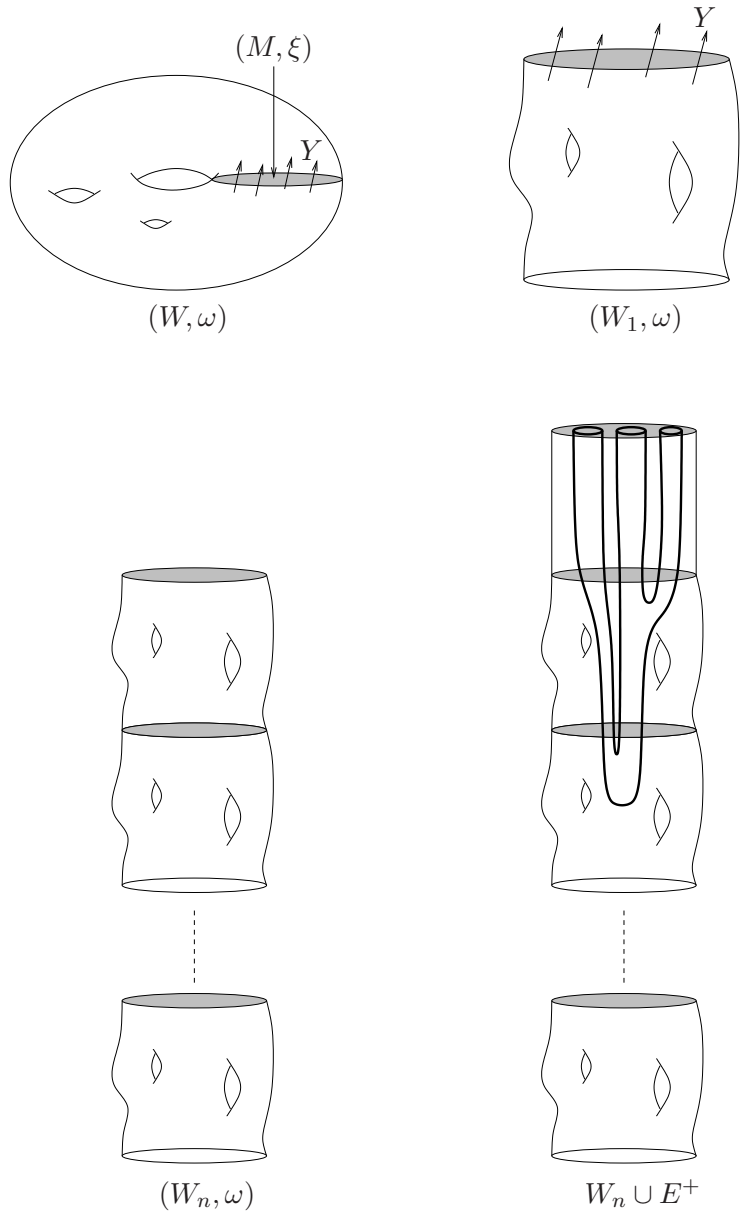}
\caption{\label{fig:infinitechain} The compact symplectic manifold 
$(W,\omega)$ contains the 
non-separating contact hypersurface $(M,\xi)$.  $W\setminus M$ is 
compactified to produce $(W_1,\omega)$, which has two boundary components 
contactomorphic to $M$, one convex and one concave.  Successively 
attaching $n$ copies of $W_1$ to itself produces $(W_n,\omega)$.
Then property~$(\star)$ gives rise to a moduli space of finite energy curves
which, due to the monotonicity lemma, cannot escape from
$W_n \cup E^+$ if $n$ is sufficiently large.}
\end{figure}

\subsection{Proof of Theorem \ref{thm:multiple}}
\label{subsec:moreProofs}

Theorem~\ref{thm:multiple} follows immediately from
Theorem~\ref{thm:separating} and Example~\ref{ex:EtnyresExample}, since
a symplectic semifilling with disconnected boundary can always be turned
into a closed symplectic $4$--manifold containing non-separating contact
hypersurfaces.  One can nonetheless give a slightly easier proof as follows.

Assume that the boundary $\partial W$ is disconnected and contains a component $M$ satisfying property~$(\star)$.  Thus $W$ satisfies Assumption \ref{assumptions}, and after attaching cylindrical ends, we obtain a moduli space $\mM^\cc_0$ of $J$--holomorphic curves that fill the enlarged manifold $W^\infty$. Moreover, all $J$--holomorphic curves have positive punctures going to the end corresponding to~$M$.  Since they fill $W^\infty$, some of these curves must therefore touch $\p W \setminus M$ tangentially, which is impossible if $\p W$ is convex.

\subsection{Proof of Theorem \ref{thm:separating_general}}
\label{subsec:evenMoreProofs}

Let $(W,\om)$ be a compact connected 4-manifold with convex boundary 
$(M,\xi)$ satisfying the weak $(\star)$ property.  After attaching a symplectic
cobordism to $\p W$, we may without loss of generality remove the word 
``weak''.  Now assume that $H \subset W\setminus M$ is a non-separating 
contact hypersurface.  Thus we can cut $W$ open along $H$ and compactify to
obtain a connected symplectic cobordism $W_1$ with two convex boundary
components $H^+$ and $M$, and one concave boundary component $H^-$.

Now we can repeat the construction in the proof of Theorems~\ref{thm:rationalRuled} and~\ref{thm:separating}, namely we glue infinitely many copies of $W_1$ along $H$, obtaining a noncompact symplectic manifold $\W$ with one convex boundary component $H$ and infinitely many convex boundary components which are copies of $M$.  From here, we proceed exactly as in the previous proofs, using the moduli space of holomorphic curves arising from property~$(\star)$ on the first copy of~$M$.  The only new feature is that $\p\W$ is not compact, but since it consists of copies of the same compact and convex components, the results of \S\ref{sec:compactness} still hold, as convexity prevents the holomorphic curves in $\mM^\cc_0$ from ever approaching the other copies of $M$.  In particular, Corollary~\ref{cor:foliation} applies and again yields a contradiction.

\appendix

\section{Relative nondegeneracy of contact forms}
\label{sec:nondegeneracy}

{Our main argument uses holomorphic curves asymptotic to Morse-Bott 
families of periodic orbits. We prefer not to assume from the start that
the contact form is \emph{globally} Morse-Bott. Thus, we need a perturbation
result that preserves a given Morse-Bott submanifold and makes $\lambda$ nondegenerate everywhere else.}  
For this, it suffices to show that one can perturb $\lambda$ in some precompact subset to make all
orbits that pass through that subset nondegenerate.

\begin{Thm}
\label{thm:nondegeneracy}
Suppose $M$ is a $(2n-1)$--dimensional manifold with a smooth contact form
$\lambda$, and $\U \subset M$ is an open subset with compact closure.
Then there exists a Baire subset
$$
\Lambda_\reg(\U) \subset \{ f \in C^\infty(M) \ |\ 
f > 0 \text{ and } f|_{M\setminus \U} \equiv 1 \}
$$
such that for each $f \in \Lambda_\reg(\U)$, every periodic orbit of
$X_{f\lambda}$ passing through $\U$ is nondegenerate.
\end{Thm}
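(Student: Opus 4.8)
The plan is a Sard--Smale transversality argument in which we perturb $\lambda$ only through conformal factors supported in $\U$, after first reducing to orbits of bounded period. For $T>0$, let $\Lambda^T_\reg(\U)$ denote the set of $f\in\{g\in C^\infty(M):g>0,\ g|_{M\setminus\U}\equiv 1\}$ for which every periodic orbit of $X_{f\lambda}$ of period at most $T$ that meets $\U$ is nondegenerate. Since $\Lambda_\reg(\U)=\bigcap_{T\in\N}\Lambda^T_\reg(\U)$, the Baire category theorem reduces everything to showing each $\Lambda^T_\reg(\U)$ is open and dense. Openness is a compactness argument: by Arzel\`a--Ascoli (using that the Reeb field is bounded on the compact set $\overline\U$, so periods of orbits meeting $\overline\U$ are bounded away from $0$), the set of such orbits of period $\le T$ meeting $\overline\U$ is compact, hence finite once they are all nondegenerate; each then persists under a $C^1$--small perturbation of $f$, and a further compactness argument forbids new short orbits through $\overline\U$ from appearing. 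The substance is therefore density.

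To prove density, fix $T$, write $f=e^g$, and let $g$ range over a separable Banach space $\G$ of $C^\epsilon$--functions (in Floer's sense) vanishing outside $\U$; the smooth statement is recovered at the end by Taubes' standard argument. First I would make all \emph{simply covered} orbits of period $\le T$ meeting $\overline\U$ nondegenerate, via the universal moduli space
$$
\widetilde{\M}=\bigl\{(\gamma,\tau,g):g\in\G,\ \tau\in(0,T],\ \gamma\in W^{1,p}(\R/\Z,M)\text{ simple},\ \dot\gamma=\tau X_{e^g\lambda}(\gamma),\ \gamma(\R/\Z)\cap\U\neq\emptyset\bigr\},
$$
viewed as the zero set of a section of a Banach--space bundle. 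The key point is that $\widetilde{\M}$ is a smooth Banach manifold, i.e.\ the full linearization is surjective at each zero. Its $(\gamma,\tau)$--part is Fredholm, and a cokernel element $\zeta$ solves the formal adjoint equation along $\gamma$, a linear first--order ODE, so $\zeta$ is either identically zero or nowhere zero. One kills the cokernel using the $g$--direction: the Reeb field of $e^g\lambda$ is $e^{-g}(X_\lambda+W_g)$ with $W_g\in\xi$ determined pointwise by $\iota_{W_g}d\lambda|_\xi=dg|_\xi$, so $\partial_g X_{e^g\lambda}$ at a point $p$ depends only on the $1$--jet of the perturbation at $p$ and can be made any vector of $T_pM$. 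Since a simple orbit is embedded and meets the open set $\U$, such a perturbation can be localized near a point $p=\gamma(t_0)\in\U$, forcing $\int_0^1\langle\zeta,\partial_g X_{e^g\lambda}\rangle\,dt\neq 0$ for a suitable choice, so the cokernel vanishes. Then $\widetilde{\M}\to\G$ is Fredholm, Sard--Smale makes its regular values residual, and for a regular $g$ every simply covered orbit of $X_{e^g\lambda}$ of period $\le T$ meeting $\overline\U$ is nondegenerate --- in particular there are finitely many.

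To handle multiple covers, fix such a regular $g$ and let $\gamma_0^{(1)},\dots,\gamma_0^{(m)}$ be the finitely many simple orbits of period $\le T$ meeting $\overline\U$, with minimal periods $\tau_i$ and linearized return maps $A_i$. A cover of $\gamma_0^{(i)}$ of multiplicity $k$ with $k\tau_i\le T$ is nondegenerate precisely when $A_i$ has no eigenvalue that is a $k$--th root of unity, so it suffices to push each $A_i$ off the finite union of positive--codimension sets $\{A:A\text{ has a root--of--unity eigenvalue of order}\le\lfloor T/\tau_i\rfloor\}$. This is a further small transversality argument: a perturbation of $g$ supported in a small ball around an embedded point of $\gamma_0^{(i)}$ in $\U$ changes $A_i$ by an essentially arbitrary nearby element of the symplectic group, while leaving the other $\gamma_0^{(j)}$ and the list of period--$\le T$ simple orbits undisturbed; performing these perturbations with disjoint supports yields, arbitrarily close to the given $g$, an $f=e^g$ for which \emph{all} periodic orbits of $X_{f\lambda}$ of period $\le T$ meeting $\U$ are nondegenerate. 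Together with the openness already noted, $\Lambda^T_\reg(\U)$ is open and dense; passing from $C^\epsilon$ back to $C^\infty$ completes the proof.

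The hard part is the localization in the density step, and it is what dictates the two--stage structure above: a perturbation of the conformal factor cannot be localized at an ``injective point'' of a multiply covered orbit, so one cannot directly run the transversality argument for covers. The workaround --- first arranging nondegeneracy of all simple short orbits, where localization is unobstructed, and only afterwards controlling their return maps --- is standard in this kind of problem, but it is the step that requires the most care here.
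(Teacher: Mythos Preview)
Your approach is essentially the same as the paper's: a Sard--Smale argument for simply covered orbits using localization of the conformal perturbation at an embedded point in $\U$ (with the cokernel killed via the formal adjoint and the formula $\partial_g X_{e^g\lambda}$), followed by a finite further perturbation of the linearized return maps to avoid roots of unity and thereby handle multiple covers. The paper uses $C^k$ functions rather than Floer's $C^\epsilon$ space, but this is cosmetic.

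There is one point where the paper's decomposition is more careful than yours and it matters for the argument. You take $\Lambda_\reg(\U) = \bigcap_T \Lambda^T_\reg(\U)$ and assert that each $\Lambda^T_\reg(\U)$ is open. But membership in $\Lambda^T_\reg(\U)$ says nothing about orbits that lie in $M\setminus\U$ yet touch $\partial\U$: such an orbit may be degenerate for $f$, and under an arbitrarily small perturbation of $f$ (which changes the Reeb field on any neighborhood of $\partial\U$) it can move into $\U$ while remaining degenerate, so $\Lambda^T_\reg(\U)$ need not be open. The paper circumvents this by building the countable decomposition out of sets $\Lambda_{\reg,N}(\U)$ defined via a space $\mM_N(f)$ of simple orbits satisfying \emph{three} quantitative bounds: period $\le N$, an injectivity bound $\inf_{t\ne t'}\dist(x(t),x(t'))/\dist(t,t') \ge 1/N$, and a depth bound $\dist(x(t),M\setminus\U) \ge 1/N$ for some~$t$. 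The depth bound is exactly what prevents the boundary leak above, and the injectivity bound prevents a sequence of simple orbits from collapsing onto a multiple cover in the Arzel\`a--Ascoli limit; together they make openness of $\Lambda_{\reg,N}(\U)$ straightforward. Your argument can be repaired by the same refinement.
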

\begin{proof}
We give a proof in two steps, first showing that a generic choice of the
{function $f$ makes all simply covered orbits of $X_{f\lambda}$ passing through $\U$ 
nondegenerate. Then we extend this to multiple covers by a further
perturbation.}

{The first step is an adaptation of the standard Sard-Smale argument.}  Let $\xi = \ker\lambda$, 
and for some large $k \in \N$, define the Banach space
$$
C^k_\U(M) = \left\{ f \in C^k(M,\R)\ \big|\ f|_{M \setminus \U} \equiv 0 \right\}
$$
and Banach manifold
$$
\Lambda^k(\U) = \{ f \in C^k(M,\R)\ |\ f > 0 \text{ and } f - 1 \in
C^k_\U(M) \},
$$
whose tangent space at any $f \in \Lambda^k(\U)$
can be identified with $C^k_\U(M)$.  We will consider the nonlinear operator
$$
\sigma(x,T,f) := \dot{x} - T X_{f\lambda}(x)
$$
as a section of a Banach space bundle over $H^1(S^1,M) \times (0,\infty)
\times \Lambda^k(\U)$ whose fiber at $(x,T,f)$ is $L^2(x^*TM)$.  Since
$X_{f\lambda}$ depends on the first derivative of $f$, it is of class
$C^{k-1}$ and the section $\sigma$ is therefore of class~$C^{k-2}$.
Choosing any symmetric connection $\nabla$ on~$M$,
the linearization of $\sigma$ at $(x,T,f) \in \sigma^{-1}(0)$
with respect to the first variable defines the operator
\begin{equation}
\label{eqn:Dx}
D_x : H^1(x^*TM) \to L^2(x^*TM) : \hat{x} \mapsto \nabla_t \hat{x} -
T \nabla_{\hat{x}} X_{f\lambda}.
\end{equation}
Since $\dot{x} = T X_{f\lambda}(x)$, we can identify the normal bundle of
$x$ with $x^*\xi$ and thus define a splitting $x^*TM = T S^1 \oplus x^*\xi$.
A short calculation then allows us to rewrite $D_x$ with respect to the
splitting in the block form
\begin{equation}
\label{eqn:block}
D_x = \begin{pmatrix}
\p_t & 0  \\
0     & D_x^N
\end{pmatrix},
\end{equation}
where $D_x^N : H^1(x^*\xi) \to L^2(x^*\xi)$ is defined again by
\eqref{eqn:Dx}, and is a Fredholm operator of index~$0$.  The orbit $x$ is
nondegenerate if and only if $D_x^N$ is an isomorphism.

The total linearization of $\sigma$ at $(x,T,f) \in \sigma^{-1}(0)$ is now
$$
D\sigma(x,T,f)(\hat{x},\hat{T},\hat{f}) = D_x\hat{x} - 
\hat{T} X_{f\lambda}(x) - T \widehat{X}(x),
$$
where we define the vector field $\widehat{X} := 
\p_\tau X_{(f + \tau\hat{f})\lambda}|_{\tau=0}$.
It follows from the definition of the Reeb vector field that $\widehat{X}$
takes the form $-\hat{f} X_{f\lambda} + V_{\hat{f}}$ where 
$V_{\hat{f}} \in \Gamma(\xi)$ is
uniquely determined by the condition
\begin{equation}
\label{eqn:V}
\left. d(f\lambda)(V_{\hat{f}},\cdot)\right|_{\xi} = 
\left. d\hat{f}\right|_{\xi}.
\end{equation}

We define the universal moduli space of parametrized Reeb orbits as
$\mM := \sigma^{-1}(0)$, and let $\mM^* \subset \mM$ denote the open subset 
consisting of triples $(x,T,f)$ for which $x$ is simply covered and
$x(S^1) \cap \U \ne \emptyset$.  Similarly, denote
$$
\mM^*(f) = \{ (x,T) \ |\ (x,T,f) \in \mM^* \}.
$$
We claim
that $D\sigma(x,T,f)$ is surjective whenever $(x,T,f) \in \mM^*$, hence
$\mM^*$ is a $C^{k-2}$--smooth Banach manifold.  To see this, note that
one can always find $\eta \in H^1(T S^1)$ and $\hat{T} \in \R$ so that
$Tx(\p_t \eta) - \hat{T} X_{f\lambda}(x)$ takes any desired value in
$L^2(x^*(\R X_{f\lambda}))$, thus it suffices to show that
the ``normal part'' 
$$
H^1(x^*\xi) \oplus C^k_\U(M) \to L^2(x^*\xi) : (\hat{x},\hat{f}) \mapsto
D_x^N \hat{x} - T V_{\hat{f}}
$$
is surjective.  If it isn't, then there exists a {section
$\eta\neq0\in L^2(x^*\xi)$} such that $\langle D_x^N \hat{x}, \eta \rangle_{L^2} =0$
for all $\hat{x} \in H^1(x^*\xi)$ and 
$\langle V_{\hat{f}} , \eta \rangle_{L^2} = 0$ for all $\hat{f} \in
C^k_\U(M)$ vanishing outside of~$\U$.  The first relation implies that
$\eta$ is in the kernel of the formal adjoint of $D_x^N$, a first order linear
differential operator, hence $\eta$ is smooth and nowhere vanishing.
But then if $x(t_0) \in \U$, then using \eqref{eqn:V}, $\hat{f}$ can be 
chosen near $x(t_0)$ so that the second relation requires $\eta$ to vanish on 
a neighborhood of $t_0$, giving a contradiction.

Now applying the Sard-Smale theorem to the natural projection
$\mM^* \to \Lambda^k(\U) : (x,T,f) \mapsto f$, we find a Baire subset
$\Lambda^k_\reg(\U) \subset \Lambda^k(\U)$ for which every simply covered
Reeb orbit passing through $\U$ is nondegenerate.  

For the second step, denote by $\dist(\ ,\ )$ the distance functions
resulting from any choice of Riemannian metrics on $S^1$ and $M$, and
define for each positive integer $N \in \N$ a subset
$$
\mM_N(f) \subset \mM^*(f)
$$
consisting of Reeb orbits $(x,T)$ that satisfy the following conditions:
\begin{enumerate}
\item $T \le N$.
\item There exists $t \in S^1$ such that
$$
\inf_{t' \in S^1 \setminus\{t\}} \frac{\dist(x(t),x(t'))}{\dist(t,t')} \ge 
\frac{1}{N}.
$$
\item There exists $t \in S^1$ such that $\dist(x(t),M\setminus \U) \ge 1 / N$.
\end{enumerate}
Moreover, let $\Lambda_{\reg,N}(\U) \subset \Lambda^\infty(\U)$ denote the
space of all smooth functions $f \in \Lambda^k(\U)$ for which all
covers of orbits in $\mM_N(f)$ up to multiplicity~$N$ are nondegenerate.
Since nondegeneracy is an open condition and any sequence $(x_k,T_k) \in
\mM_N(f_k)$ with $f_k \to f$ in $C^\infty$ has a convergent subsequence by
the Arzel\`{a}-Ascoli theorem, $\Lambda_{\reg,N}(\U)$ is an open set.
We claim it is also dense.  Indeed, any $f \in \Lambda^\infty(\U)$ has a
perturbation $f_\epsilon \in \Lambda^k(\U)$ for which all the simple orbits
in $\mM_N(f_\epsilon)$ are nondegenerate due to step~1.  In this case
$\mM_N(f_\epsilon)$ is a smooth compact $1$--manifold, i.e.~a finite union
of circles, which are the parametrizations of finitely many distinct
nondegenerate orbits, and the space is stable under small perturbations
of~$f_\epsilon$.  Thus by a further perturbation, we can
make $f_\epsilon$ smooth and arrange that none of the orbits in
$\mM_N(f_\epsilon)$ have a Floquet multiplier that is a $k$th root of unity
for $k \in \{1,\ldots,N\}$.  The latter can be achieved using a normal form
for $f_\epsilon\lambda$ as in \cite{HWZ:props1}*{Lemma~2.3} near each
individual orbit: in particular, we can perturb so that each orbit remains
unchanged but the linearized return map changes arbitrarily within the
space of symplectic linear maps.  This proves that $\Lambda_{\reg,N}(\U)$ is
dense in $\Lambda^\infty(\U)$, and we can now construct
$\Lambda_\reg(\U)$ as a countable intersection of open dense sets:
$$
\Lambda_\reg(\U) = \bigcap_{N \in \N} \Lambda_{\reg,N}(\U).
$$
\end{proof}

\section*{Acknowledgments}
We would like to thank Klaus Mohnke for bringing the
question of non-separating contact hypersurfaces to our attention,
John Etnyre for providing Example~\ref{ex:EtnyresExample},
Klaus Niederkr\"uger, Paolo Ghiggini and Slava Matveyev
for enlightening discussions and Yasha Eliashberg for some helpful
comments on an earlier draft of the paper.
We began work on this article while BB was visiting ETH Z\"urich, and we'd
like to thank ETH for its stimulating working environment.
PA is partially supported by NSF grant DMS-0805085.
CW is partially supported by NSF Postdoctoral Fellowship DMS-0603500.

\begin{bibdiv}
\begin{biblist}
\bibselect{wendlc}
\end{biblist}
\end{bibdiv}

\end{document}